\newcommand{\R}{{\mathbb R}}
\newcommand{\N}{{\mathbb N}}
\newcommand{\Sp}{{\mathbb S}}
\newcommand{\ds}{\displaystyle}
\newcommand{\no}{\nonumber}
\newcommand{\be}{\begin{eqnarray}}
\newcommand{\ben}{\begin{eqnarray*}}
\newcommand{\en}{\end{eqnarray}}
\newcommand{\enn}{\end{eqnarray*}}
\newcommand{\ba}{\backslash}
\newcommand{\pa}{\partial}
\newcommand{\ov}{\overline}
\newcommand{\Rt}{{\rm Re}}
\newcommand{\g}{\gamma}
\newcommand{\G}{\Gamma}
\newcommand{\vep}{\varepsilon}
\newcommand{\Om}{\Omega}
\newcommand{\om}{\omega}
\newcommand{\la}{\lambda}
\newcommand{\ol}{\overline}
\newcommand{\half}{\frac{1}{2}}
\newcommand{\wid}{\widetilde}
\newtheorem{remark}[theorem]{Remark}
\newtheorem{algorithm}{Algorithm}[section]
\newcommand{\Rmnum}[1]{\expandafter\@slowromancap\romannumeral #1@}
\begin{document}
\renewcommand{\theequation}{\arabic{section}.\arabic{equation}}

%\begin{titlepage}
\title{\bf Uniqueness and direct imaging method for inverse scattering by locally rough surfaces
with phaseless near-field data}
% at a fixed frequency}
%
\author{Xiaoxu Xu\thanks{Academy of Mathematics and Systems Science, Chinese Academy of Sciences,
Beijing 100190, China and School of Mathematical Sciences, University of Chinese
Academy of Sciences, Beijing 100049, China ({\tt xuxiaoxu14@mails.ucas.ac.cn})}
\and
Bo Zhang\thanks{NCMIS, LSEC and Academy of Mathematics and Systems Science, Chinese Academy of
Sciences, Beijing 100190, China and School of Mathematical Sciences, University of Chinese
Academy of Sciences, Beijing 100049, China ({\tt b.zhang@amt.ac.cn})}
\and
Haiwen Zhang\thanks{NCMIS and Academy of Mathematics and Systems Science, Chinese Academy of Sciences,
Beijing 100190, China ({\tt zhanghaiwen@amss.ac.cn})}
}

\date{}

%\end{titlepage}
\maketitle

\begin{abstract}
This paper is concerned with inverse scattering of plane waves by a locally perturbed infinite plane (which is
called a locally rough surface) with the modulus of the total-field data (also called the phaseless near-field
data) at a fixed frequency in two dimensions.
We consider the case where a Dirichlet boundary condition is imposed on the locally
rough surface. This problem models inverse scattering of plane acoustic waves by a one-dimensional sound-soft,
locally rough surface; it also models inverse scattering of plane electromagnetic waves
by a locally perturbed, perfectly reflecting, infinite plane in the TE polarization case.
We prove that the locally rough surface is uniquely determined by the phaseless near-field data generated
by a countably infinite number of plane waves and measured on an open domain above the locally rough surface.
Further, a direct imaging method is proposed to reconstruct the locally rough surface from the phaseless near-field
data generated by plane waves and measured on the upper part of the circle with a sufficiently large radius.
Theoretical analysis of the imaging algorithm is derived by making use of properties of the scattering solution
and results from the theory of oscillatory integrals (especially the method of stationary phase).
Moreover, as a by-product of the theoretical analysis, a similar direct imaging method with full far-field data
is also proposed to reconstruct the locally rough surface.
Finally, numerical experiments are carried out to demonstrate that the imaging algorithm
with phaseless near-field data and full far-field data are fast, accurate and very robust with respect to noise
in the data.

\begin{keywords}
Inverse scattering, locally rough surface, Dirichlet boundary condition, phaseless near-field data,
full far-field data.
\end{keywords}

\begin{AMS}
35R30, 35Q60, 65R20, 65N21, 78A46
\end{AMS}

\end{abstract}

\section{Introduction}\label{sec1}
\setcounter{equation}{0}

Acoustic and electromagnetic scattering by a locally perturbed infinite plane (called a locally
rough surface in this paper) occurs in many applications such as radar, remote sensing, geophysics,
medical imaging and nondestructive testing
(see, e.g., \cite{BaoGaoLi2011,BaoLin2011,BurkardPotthast2010,ChenXD18,CZ98,DeSanto1}).

In this paper, we are restricted to the two-dimensional case by assuming that the local perturbation is
invariant in the $x_3$ direction.
Assume further that the incident wave is time-harmonic ($e^{-i\om t}$ time dependence), so that the total
wave field $u$ satisfies the Helmholtz equation
\be\label{eq1}
\Delta u+k^2u=0\quad\mbox{in}\;\;D_+.
\en
Here, $k=\omega/c>0$ is the wave number, $\omega$ and $c$ are the frequency and speed of the wave in $D_+$,
respectively, and $D_+:=\{(x_1,x_2)\;|\;x_2>h(x_1),x_1\in\R\}$ represents a homogeneous medium
above the locally rough surface denoted by $\G:=\pa D_+=\{(x_1,x_2)\;|\;x_2=h(x_1),x_1\in\R\}$
with $h\in C^2(\R)$ having a compact support in $\R$.
In this paper, the incident field $u^i$ is assumed to be the plane wave
\be\label{eq32}
u^i(x,d):=e^{ikx\cdot d},
\en
where $d=(\cos\theta_d,\sin\theta_d)\in \mathbb{S}^1_-$ is the incident direction
with $\pi<\theta_d<2\pi$ and $\Sp^1_-:=\{x=(x_1,x_2)\;|\;|x|=1,x_2<0\}$ is the lower part of the unit
circle $\Sp^1=\{x\in\R^2\;|\;|x|=1\}$.
This paper considers the case where a Dirichlet boundary condition is imposed on the locally rough surface.
Thus, the total field $u(x,d)=u^i(x,d)+u^r(x,d)+u^s(x,d)$ vanishes on the surface $\G$:
\be\label{eq2}
u(x,d)=u^i(x,d)+u^r(x,d)+u^s(x,d)=0\qquad\mbox{on}\;\;\G,
\en
where $u^r$ is the reflected wave by the infinite plane $x_2=0$:
\be\label{eq33}
u^r(x,d):=-e^{ikx\cdot d'}
\en
with $d^\prime=(\cos\theta_d,-\sin\theta_d)$ and $u^s$ is the unknown scattered wave to be determined
which is required to satisfy the Sommerfeld radiation condition
\be\label{rc}
\lim_{r\to\infty}r^{\frac12}\left(\frac{\pa u^s}{\pa r}-iku^s\right)=0,\quad r=|x|,\quad x\in D_+.
\en
This problem models electromagnetic scattering by a locally perturbed, perfectly conducting, infinite plane
in the TE polarization case; it also models acoustic scattering by a one-dimensional sound-soft,
locally rough surface. See FIG. \ref{fig6} for the geometry of the scattering problem.

\begin{figure}
\centering
\includegraphics[width=3in]{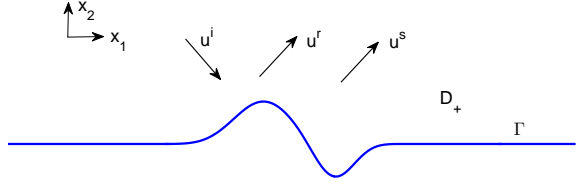}
  \vspace{-0.2in}
\caption{The scattering problem from a locally rough surface}\label{fig6}
\end{figure}

The well-posedness of the scattering problem (\ref{eq1})-(\ref{rc}) has been studied by using
the variational method with a Dirichlet-to-Neumann (DtN) map in \cite{BaoLin2011} or the integral
equation method in \cite{Willers1987,ZZ13}. In particular, it was proved in \cite{Willers1987,ZZ13} that
$u^s$ has the following asymptotic behavior at infinity:
\be\label{eq96}
u^s(x,d)=\frac{e^{ik|x|}}{\sqrt{|x|}}\left(u^\infty(\hat{x},d)+O\Big(\frac{1}{|x|}\Big)\right),
\qquad |x|\to\infty
\en
uniformly for all observation directions $\hat{x}:=x/|x|\in\Sp^1_+$ with
$\Sp^1_+:=\{x=(x_1,x_2)\;|\;|x|=1,x_2>0\}$ the upper part of the unit circle $\Sp^1$,
where $u^\infty(\hat{x},d)$ is called the far-field pattern of the scattered field $u^s$,
depending on the observation direction $\hat{x}\in\Sp^1_+$ and the incident direction $d\in\Sp^1_-$.

Many numerical algorithms have been proposed for the inverse problem of reconstructing
the rough surfaces from the phased near-field or far-field data (see, e.g.,
\cite{BaoLin2011,BurkardPotthast2010,CL05,CGHIR,DeSanto1,DeSanto2,DLLY,KressTran2000,LSZ17,LZZ18,Spivack,ZZ13}
and the references quoted there). For the case when the local perturbation is below
the infinite plane which is called the inverse cavity problem, see \cite{BaoGaoLi2011,Li2012} and
the reference quoted there.

In diffractive optics and radar imaging, it is much harder to obtain data with accurate phase information
compared with only measuring the intensity (or the modulus) of
the data \cite{BLL13,BLT15,Candes15,ChenXD18,CH16,FDAM06,KR97,Pan11}.
Thus it is often desirable to study inverse scattering problems with phaseless data.
Inverse scattering with phaseless near-field data has been extensively studied numerically over the past
decades (see, e.g., \cite{BLL13,BZ16,Candes15,CH2017,CH16,CFH17,FDAM06,NMP,Pan11,ChenXD17}
and the references quoted there). Recently, mathematical issues including uniqueness and stability
have also been studied for inverse scattering with phaseless near-field data
(see, e.g., \cite{K14,K17,KR16,KR17,MH17,N15,N16} and the references quoted there).

In contrast to the case with phaseless near-field data, inverse scattering with phaseless far-field data
is much less studied both mathematically and numerically due to the {\em translation invariance property}
of the phaseless far-field data, that is, the modulus of the far-field pattern is invariant under
translations of the obstacle for plane wave incidence \cite{KR97,LS04,ZZ17}.
The translation invariance property makes it impossible to reconstruct the location of the obstacle or
the inhomogeneous medium from the phaseless far-field pattern with one plane wave as the incident field.
Nevertheless, several reconstruction algorithms have been developed to
reconstruct the shape of the obstacle from the phaseless far-field data with one plane
wave as the incident field (see \cite{ACZ16,I07,IK10,IK11,KR97,LL15,LLW17,S16}).
Uniqueness has also been established in recovering the shape of the obstacle from the phaseless far-field
data with one plane wave as the incident field \cite{LZ10,majda76}.
Recently, progress has been made on the mathematical and numerical study of inverse scattering with
phaseless far-field data. For example, it was first proved in \cite{ZZ17} that the translation invariance
property of the phaseless far-field pattern can be broken by using superpositions of two plane waves as
the incident fields for all wave numbers in a finite interval. And a recursive Newton-type iteration algorithm
in frequencies was further developed in \cite{ZZ17} to numerically reconstruct both the location and the
shape of the obstacle simultaneously from multi-frequency phaseless far-field data.
This method was further extended in \cite{ZZ17b} to reconstruct the locally rough surface from
multi-frequency intensity-only far-field or near-field data.
Furthermore, a direct imaging algorithm was recently developed in \cite{ZZ18} to reconstruct the obstacle
from the phaseless far-field data generated by infinitely many sets of superpositions of two plane waves as
the incident fields at a fixed frequency. And uniqueness results have also been established rigorously in
\cite{XZZ18} for inverse obstacle and medium scattering from the phaseless far-field patterns generated
by infinitely many sets of superpositions of two plane waves with different directions at a fixed frequency
under certain a priori conditions on the obstacle and the inhomogeneous medium.
The a priori assumption on the obstacle and the inhomogeneous medium in \cite{XZZ18} was removed
in \cite{XZZ18b} by adding a known reference ball into the scattering model.
Note that the idea of adding a reference ball to the scattering system was recently used in \cite{ZG18}
to prove uniqueness results for inverse scattering with phaseless far-field data generated by
superpositions of a plane wave and a point source as the incident fields at a fixed frequency.
Note further that, by adding one point scatterer into the scattering model stability estimates have been
obtained in \cite{JLZ18} for inverse obstacle and medium scattering with phaseless far-field data
associated with one plane wave as the incident field under certain conditions on the obstacle and
inhomogeneous medium if the point scatterer is placed far away from the scatterer.
In addition, direct imaging algorithms are proposed in \cite{JLZ18} to reconstruct the scattering obstacle
from the phaseless far-field data associated with one plane wave as the incident field.

In this paper, we consider uniqueness and fast imaging algorithm for inverse scattering by locally rough
surfaces from phaseless near-field data corresponding to incident plane waves at a fixed frequency.
First, we prove that the locally rough surface is uniquely determined by the phaseless near-field data
generated by a countably infinite number of incident plane waves and measured on an open domain above
the locally rough surface, following the ideas in \cite{ZZ13,N15}.
Then we develop a direct imaging algorithm for the inverse scattering problem with phaseless near-field data
generated by incident plane waves and measured on the upper part of the circle containing the local
perturbation part of the infinite plane, based on the imaging function $I^{Phaseless}(z)$ with $z\in\R^2$
(see the formula (\ref{eq3}) below). The theoretical analysis of the imaging function $I^{Phaseless}(z)$
is given by making use of properties of the scattering solution and results from the theory of
oscillatory integrals (especially the method of stationary phase).
From the theoretical analysis result, it is expected that if the radius of the measurement circle is
sufficiently large, $I^{Phaseless}(z)$ will take a large value when $z$ is on the boundary $\G$ and
decay as $z$ moves away from $\G$. Based on this, a direct imaging algorithm is proposed to recover
the locally rough surface from the phaseless near-field data.
Further, numerical experiments are also carried out to demonstrate that our imaging algorithm
provides an accurate, fast and stable reconstruction of the locally rough surface.
Moreover, as a by-product of the theoretical analysis, a similar direct imaging algorithm with
full far-field data is also proposed to reconstruct the locally rough surfaces with convincing numerical
experiments illustrating the effectiveness of the imaging algorithm.
It should be pointed out that a direct imaging method was recently proposed in \cite{CH2017,CH16}
for reconstructing extended obstacles with acoustic and electromagnetic phaseless near-field data,
based on the reverse time migration technique

The remaining part of the paper is organized as follows. The uniqueness result is proved in
Section \ref{sec2} for an inverse scattering problem with phaseless near-field data.
In Section \ref{sec3}, the direct imaging method with phaseless near-field data is proposed, and
its theoretical analysis is given. As a by-product, the direct imaging method with full far-field data
is also presented in Section \ref{sec3}.
Numerical experiments are carried out in Section \ref{sec4} to illustrate the effectiveness
of the imaging method. Conclusions are given in Section \ref{conclusion}.
In Appendix A, we use the method of stationary phase to prove Lemma \ref{lem5} in Section \ref{sec3}
which plays an important role in the theoretical analysis of the direct imaging method.

We conclude this section with introducing some notations used throughout this paper.
Define $B_R:=\{x=(x_1,x_2)\;|\;|x|<R\}$ to be a disk centered at the origin and with radius $R>0$
large enough so that the local perturbation $\G_p:=\{(x_1,h(x_1))\;|\;x_1\in\textrm{supp}(h)\}\subset B_R$.
Define $\R^2_\pm:=\{(x_1,x_2)\in\R^2\;|\;x_2\gtrless0\}$, $\pa B^+_R:=\pa B_R\cap D_+$.
For any $x,z\in\R^2$ and $d\in\Sp^1$, set $x:=(x_1,x_2),z:=(z_1,z_2),d:=(d_1,d_2)$
and let $x':= (x_1,-x_2)$ be the reflection of $x$ with respect to the $x_1$-axis.
Further, let $\hat{x}=x/|x|=(\hat{x}_1,\hat{x}_2)=(\cos\theta_{\hat{x}},\sin\theta_{\hat{x}})$,
$\hat{z}=z/|z|=(\hat{z}_1,\hat{z}_2)=(\cos\theta_{\hat{z}},\sin\theta_{\hat{z}})$
and $d=(\cos\theta_d,\sin\theta_d)$ with $\theta_{\hat{x}},\theta_{\hat{z}},\theta_d\in[0,2\pi]$.
Note also that if $x\neq0$ then $\hat{x}_1=x_1/|x|$ and $\hat{x}_2=x_2/|x|$.
Throughout this paper, the positive constants $C$, $C_1$ and $C_2$ may be different at different places.

\section{Uniqueness for an inverse problem}\label{sec2}
\setcounter{equation}{0}

In this section, we establish a uniqueness result for an inverse scattering problem with phaseless
near-field data, motivated by \cite{N15}.
To this end, assume that $\G_1,\G_2$ are two locally rough surfaces, where
$\G_j:=\{(x_1,x_2)\;|\;x_2=h_j(x_1),x_1\in\R\}$ with $h_j\in C^2(\R)$ having a compact support in $\R$, $j=1,2$.
Further, denote by $\G_{p,j}:=\{(x_1,h_j(x_1))\;|\;x_1\in\textrm{supp}(h_j)\}$ the local perturbation of $\G_j$
and by $D_{+,j}$ the domain above $\G_j$, $j=1,2$.
For $j=1,2$ suppose that the total field is given by $u_j=u^r+u^r+u^s_j$, where $u^s_j(\hat{x},d)$ is the
scattered field corresponding to the locally rough surface $\G_j$ with its far-field pattern $u^\infty_j(\hat{x},d)$.
Moreover, let $R>0$ be large enough such that the local perturbation $\G_{p,j}\subset B_R$ ($j=1,2$)
and let $\Omega$ be a bounded open domain above the locally rough surfaces $\G_1$ and $\G_2$.
See FIG. \ref{fig7} for the geometry of the inverse scattering problem.
\begin{figure}
\centering
\includegraphics[width=3in]{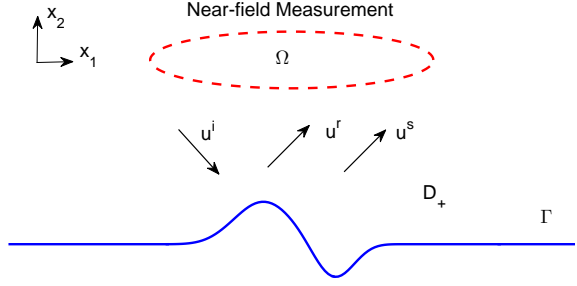}
 \vspace{-0.2in}
\caption{Inverse scattering with phaseless near-field data measured on the domain $\Omega$
}\label{fig7}
\end{figure}

We need the following result on the property of the scattered field which
is also useful in the numerical algorithm in Section \ref{sec3}.

\begin{lemma}\label{le3}
Let $x\in D_+,\,d\in\Sp^1_-$. Then for any $x\in D_+$ with $|x|$ large enough and $d\in\Sp^1_-$
the scattering solution $u^s(x,d)$ of the scattering problem (\ref{eq1})-(\ref{rc}) has the asymptotic
behavior
\be\label{eq92}
u^s(x,d)=\frac{e^{ik|x|}}{|x|^{1/2}}u^\infty(\hat{x},d)+u^s_{Res}(x,d)
\en
with
\be\label{eq94-}
\|u^\infty(\cdot,d)\|_{C^1({\Sp^1_+})}&\le& C,\\ \label{eq94}
|u^s_{Res}(x,d)|&\le& \frac{C}{|x|^{3/2}},
\en
where $C>0$ is a constant independent of $x$ and $d$.
\end{lemma}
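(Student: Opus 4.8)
The plan is to derive an explicit boundary-integral representation of $u^s(\cdot,d)$ over the compact perturbation $\G_p$ by means of the Dirichlet Green's function of the upper half-plane, then to read off both the far-field pattern and the sharp remainder from the large-argument expansion of $H_0^{(1)}$, and finally to make all bounds uniform in $d$ by invoking the well-posedness of the forward problem. Concretely, I would recall $\Phi(x,y):=\tfrac{i}{4}H_0^{(1)}(k|x-y|)$, the fundamental solution of the Helmholtz equation in $\R^2$, and set $G(x,y):=\Phi(x,y)-\Phi(x,y')$ with $y'=(y_1,-y_2)$, so that $G(x,y)=0$ whenever $y_2=0$ (and whenever $x_2=0$). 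Fix $R_0$ with $\G_p\subset B_{R_0}$. For $x\in D_+$ with $|x|$ large, apply Green's second identity to $u^s(\cdot,d)$ and $G(x,\cdot)$ on $D_+\cap B_R$ with $R>\max(R_0,|x|)$; the boundary integral runs over $(\G\cap\ol{B_R})\cup\pa B_R^+$. On the flat portions of $\G$ one has $y_2=0$, hence $G(x,y)=0$, and also $u^s(y,d)=0$ there (since $u^i(\cdot,d)+u^r(\cdot,d)=0$ on $\{x_2=0\}$), so the integrand vanishes identically; letting $R\to\infty$, the $\pa B_R^+$ contribution tends to zero by the Sommerfeld radiation condition satisfied by both $u^s(\cdot,d)$ and $G(x,\cdot)$ (this is exactly the half-plane representation argument of \cite{Willers1987,ZZ13}). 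One thus obtains, with $\nu$ a unit normal on $\G_p$,
\[
u^s(x,d)=\int_{\G_p}\left(u^s(y,d)\,\frac{\pa G(x,y)}{\pa\nu(y)}-\frac{\pa u^s(y,d)}{\pa\nu(y)}\,G(x,y)\right)\md s(y),\qquad |x|\ \text{large}.
\]

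Next I would substitute the uniform large-$|x|$ asymptotics of the Hankel function and its $y$-gradient. For $y$ in the compact set $\G_p$, $|x-y|=|x|-\hat x\cdot y+O(|x|^{-1})$, so $H_0^{(1)}(k|x-y|)=\sqrt{2/(\pi k|x|)}\,e^{i(k|x|-\pi/4)}e^{-ik\hat x\cdot y}\big(1+O(|x|^{-1})\big)$, and analogously for $\Phi(x,y')$ (with $\hat x\cdot y$ replaced by $\hat x\cdot y'$) and for the $y$-derivatives, which only bring down factors of modulus $\le k$; all remainders are $O(|x|^{-1})$ uniformly in $y\in\G_p$ and $\hat x\in\Sp^1_+$. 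Collecting the leading term as $\frac{e^{ik|x|}}{|x|^{1/2}}u^\infty(\hat x,d)$ with
\[
u^\infty(\hat x,d)=\frac{e^{i\pi/4}}{\sqrt{8\pi k}}\int_{\G_p}\left(u^s(y,d)\,\frac{\pa}{\pa\nu(y)}\!\left(e^{-ik\hat x\cdot y}-e^{-ik\hat x\cdot y'}\right)-\frac{\pa u^s(y,d)}{\pa\nu(y)}\left(e^{-ik\hat x\cdot y}-e^{-ik\hat x\cdot y'}\right)\right)\md s(y),
\]
every remaining integrand term is $\frac{e^{ik|x|}}{|x|^{1/2}}\cdot O(|x|^{-1})=O(|x|^{-3/2})$ uniformly in $y$ and $\hat x$. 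Hence $u^s_{Res}(x,d):=u^s(x,d)-\frac{e^{ik|x|}}{|x|^{1/2}}u^\infty(\hat x,d)$ satisfies $|u^s_{Res}(x,d)|\le C\big(\|u^s(\cdot,d)\|_{C^0(\G_p)}+\|\pa_\nu u^s(\cdot,d)\|_{C^0(\G_p)}\big)|x|^{-3/2}$, and differentiating the displayed formula for $u^\infty$ once in the observation angle $\theta_{\hat x}$ only produces extra bounded polynomial factors in $y$, so $\|u^\infty(\cdot,d)\|_{C^1(\Sp^1_+)}\le C\big(\|u^s(\cdot,d)\|_{C^0(\G_p)}+\|\pa_\nu u^s(\cdot,d)\|_{C^0(\G_p)}\big)$.

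To finish, I would make the right-hand sides uniform in $d\in\Sp^1_-$. The field $u^s(\cdot,d)$ solves the well-posed forward problem (\ref{eq1})--(\ref{rc}) with Dirichlet data $-(u^i(\cdot,d)+u^r(\cdot,d))$ on $\G$; since $u^i(x,d)=e^{ikx\cdot d}$, $u^r(x,d)=-e^{ikx\cdot d'}$ and all their derivatives are bounded on the compact set $\G_p$ uniformly for $d\in\Sp^1$, the stability estimate for the forward problem \cite{BaoLin2011,Willers1987,ZZ13} together with up-to-boundary elliptic regularity (recall $h\in C^2$) gives $\|u^s(\cdot,d)\|_{C^1(\ol{D_+\cap B_{R_0}})}\le C$ with $C$ independent of $d$, hence $\|u^s(\cdot,d)\|_{C^0(\G_p)}$ and $\|\pa_\nu u^s(\cdot,d)\|_{C^0(\G_p)}$ are bounded uniformly in $d$; combining with the previous step yields (\ref{eq92})--(\ref{eq94}). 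The step I expect to require the most care is keeping all the error terms in the Hankel expansion uniform in $(y,\hat x)$ so as to land exactly the power $|x|^{-3/2}$ in (\ref{eq94}) — in particular the uniform smallness over $\G_p$ of the phase correction $e^{iO(|x|^{-1})}-1$ and of the amplitude correction $|x-y|^{-1/2}-|x|^{-1/2}$ — while everything else is either the already-cited half-plane representation formula or standard elliptic theory.
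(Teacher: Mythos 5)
Your proposal is correct, but it is considerably more self-contained than what the paper actually does: the paper's entire proof of this lemma is a two-line appeal to the well-posedness of the forward problem and to the known expansion (\ref{eq96}), both delegated to \cite{Willers1987,ZZ13}. What you have written is essentially a reconstruction of the argument inside those references — the representation of $u^s$ via the half-plane Dirichlet Green's function $G(x,y)=\Phi(x,y)-\Phi(x,y')$, the observation that both $G$ and $u^s$ vanish on the flat part of $\G$ so that only the compact piece $\G_p$ contributes, the uniform large-argument Hankel asymptotics to extract $u^\infty$ and the $O(|x|^{-3/2})$ remainder, and finally well-posedness plus boundary regularity to make the bounds uniform in $d$ and to get the $C^1(\Sp^1_+)$ estimate by differentiating the explicit far-field formula (your normalization constant $e^{i\pi/4}/\sqrt{8\pi k}$ is the right one). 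This buys a transparent, quantitative proof where the paper only cites; the price is that you must be a bit careful at a few points you partly gloss over: for $|x|$ large one should note that the reflected singularity $x'$ lies outside $\ov{D_+}$ so $G(x,\cdot)$ is smooth there; the vanishing of the $\pa B_R^+$ contribution uses the uniform radiating behavior of $u^s$ established in \cite{Willers1987,ZZ13} (which is fine to cite, and not circular since you do not use (\ref{eq96}) itself for the expansion); and the uniform-in-$d$ bound on $\pa_\nu u^s$ on $\G_p$ needs up-to-boundary $C^{1,\alpha}$ (Schauder-type) regularity or, equivalently, the uniform boundedness of the density in the integral-equation solution of \cite{ZZ13} — plain $H^2$ regularity would not give a pointwise normal-derivative bound. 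With those caveats made explicit, your argument establishes (\ref{eq92})--(\ref{eq94}) exactly as claimed.
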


\begin{proof}
The statement of this lemma follows easily from the well-posedness of the scattering
problem (\ref{eq1})-(\ref{rc}) and the asymptotic behavior (\ref{eq96}) of the scattered field $u^s$
(see, e.g., \cite{ZZ13}).
\end{proof}

We also need the following uniqueness result for the inverse scattering problem with full far-field data
which is given in \cite{ZZ13}.
% (see also \cite{Ma05} for a similar result in inverse cavity problems).

\begin{theorem}[Theorem 4.1 in \cite{ZZ13}]\label{thm-uni-full}
Assume that $\G_1$ and $\G_2$ are two locally rough surfaces and
$u^\infty_1(\hat{x},d)$ and $u^\infty_2(\hat{x},d)$ are the far-field patterns
corresponding to $\G_1$ and $\G_2$, respectively.
If $u^\infty_1(\hat{x},d_n)=u^\infty_2(\hat{x},d_n)$ for all $\hat{x}\in\Sp^1_+$
and the distinct directions $d_n\in\Sp^1_-$ with $n\in\N$ and a fixed wave number $k$,
then $\G_1=\G_2$.
\end{theorem}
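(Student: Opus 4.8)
The plan is to adapt the classical Kirsch--Kress singular-source scheme to the half-plane geometry. First I would record a Rellich-type lemma for the locally rough surface: a radiating solution $u^s$ of \eqref{eq1}, \eqref{rc} in $D_+$ whose far-field pattern vanishes on all of $\Sp^1_+$ must vanish throughout $D_+$. Granting this (it follows from \eqref{eq96} by a Rellich-type argument adapted to the half-space), applying it to the difference $u_1^s(\cdot,d_n)-u_2^s(\cdot,d_n)$, whose far-field pattern vanishes on $\Sp^1_+$, and then using unique continuation gives $u_1^s(\cdot,d_n)=u_2^s(\cdot,d_n)$, and hence --- the incident and reflected parts being common --- $u_1(\cdot,d_n)=u_2(\cdot,d_n)$, in the unbounded connected component $G$ of $D_{+,1}\cap D_{+,2}$ for every $n$. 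Since $u^\infty(\hat x,d)$ depends analytically on $d$ in a complex neighbourhood of $\Sp^1_-$ (from the well-posedness of \eqref{eq1}--\eqref{rc} and analytic dependence of the solution on the parameter $d$) and the distinct directions $\{d_n\}_{n\in\N}$ accumulate, the identity theorem upgrades this to $u_1(\cdot,d)=u_2(\cdot,d)$ in $G$ for all $d\in\Sp^1_-$.

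Next, suppose $\G_1\neq\G_2$. After possibly interchanging the indices there is a point $x^*\in\G_1$ that lies in the open set $D_{+,2}$ and on $\pa G$, together with a sequence $z_m=x^*+\tfrac1m\nu\to x^*$, where $\nu$ is the unit normal to $\G_1$ at $x^*$ pointing into $D_{+,1}$, such that $z_m\in G\cap D_{+,2}$ for all large $m$. I would then invoke the mixed reciprocity relation for the Dirichlet locally rough surface problem, which expresses the far-field pattern (at $\hat x\in\Sp^1_+$) of the scattered field $w_j^s(\cdot,z)$ generated by the point source $\Phi(\cdot,z)$ in terms of the plane-wave scattered field $u_j^s(z,-\hat x)$ with incident direction $-\hat x\in\Sp^1_-$; combined with the previous paragraph this forces $w_1^s(\cdot,z)$ and $w_2^s(\cdot,z)$ to have the same far-field pattern for each $z\in G$, hence, by the Rellich-type lemma applied in the observation variable, $w_1^s(x,z)=w_2^s(x,z)$ for $x,z\in G$. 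Fixing $x_0\in G$ near $x^*$ and letting $z=z_m\to x^*$, interior estimates keep $w_2^s(x_0,z_m)$ bounded because $x^*\notin\G_2$, whereas on $\G_1$ the scattered field $w_1^s(\cdot,z_m)$ must cancel the incident point-source field, which becomes singular as $z_m\to x^*\in\G_1$, so that $|w_1^s(x_0,z_m)|\to\infty$. This contradiction yields $\G_1=\G_2$. A variant avoiding mixed reciprocity replaces $\Phi(\cdot,z_m)$ by a Herglotz-type superposition $\int_{\Sp^1_-}\big(u^i(\cdot,d)+u^r(\cdot,d)\big)g_m(d)\,\md s(d)$ approximating it near $\G_1$, using density of such superpositions.

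The delicate points are concentrated in the first and last steps: proving the Rellich-type lemma and setting up the mixed reciprocity relation in the presence of the built-in reflected wave $u^r$ and of the infinite boundary $\G$, and the geometric bookkeeping for $G$ --- in particular verifying that $G$ reaches a neighbourhood of $x^*$ on the $D_{+,1}$ side, so that the segment $\{z_m\}$ indeed lies in $G\cap D_{+,2}$, and making the blow-up of $w_1^s(x_0,\cdot)$ quantitative by combining the singularity of $\Phi$ near $\G_1$ with well-posedness bounds for the exterior Dirichlet problem. The remaining estimates are routine.
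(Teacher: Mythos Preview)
The paper does not prove this theorem; it is quoted verbatim as Theorem 4.1 of \cite{ZZ13} and used as a black box in the proof of Theorem~\ref{thm-uni-phaseless}. So there is no ``paper's own proof'' to compare against here.

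Your outline is the standard Schiffer/Isakov/Kirsch--Kress singular-source strategy, adapted to the half-plane, and is the natural route (and essentially what one expects the cited paper \cite{ZZ13} to contain). Two points deserve tightening. First, in the analytic-continuation step you assert that the $\{d_n\}$ accumulate; they certainly accumulate in $\overline{\Sp^1_-}$, but if the only limit points are the horizontal directions $(\pm 1,0)\notin\Sp^1_-$ you must argue that $d\mapsto u^\infty(\hat x,d)$ (or $d\mapsto u^s_j(x,d)$) extends real-analytically across those endpoints before invoking the identity theorem. Second, your blow-up step is phrased with a fixed $x_0\in G$ near $x^*$; as written, the claim $|w_1^s(x_0,z_m)|\to\infty$ does not follow, since $x_0\notin\G_1$. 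The clean version evaluates at $x^*$ itself: by continuity of $w_j^s(\cdot,z_m)$ up to $\partial G$ one has $w_2^s(x^*,z_m)=w_1^s(x^*,z_m)=-\Phi(x^*,z_m)\to\infty$, contradicting the uniform boundedness of $w_2^s$ near $x^*\in D_{+,2}$. With these adjustments your sketch is sound.
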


We are now ready to state and prove the main theorem of this section.

\begin{theorem}\label{thm-uni-phaseless}
Assume that $\G_1$ and $\G_2$ are two locally rough surfaces and
$u_1(\hat{x},d)$ and $u_2(\hat{x},d)$ are the total field corresponding to $\G_1$ and $\G_2$, respectively.
Let $\Omega$ be a bounded open domain above $\G_1$ and $\G_2$.
If $|u_1(x,d_n)|=|u_2(x,d_n)|$ for all $x\in\Omega$
and the distinct directions $d_n\in\Sp^1_-$ with $n\in\N$ and a fixed wave number $k$,
then $\G_1=\G_2$.
\end{theorem}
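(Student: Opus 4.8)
The plan is to reduce the phaseless uniqueness statement to the known full-far-field uniqueness result of Theorem \ref{thm-uni-full}. The essential difficulty is that we only know $|u_1(x,d_n)|=|u_2(x,d_n)|$ on an open set $\Omega$, which a priori allows $u_1$ and $u_2$ to differ by a unimodular factor; we must first upgrade this to the identity $u_1(\cdot,d_n)\equiv u_2(\cdot,d_n)$ itself. Following the idea of \cite{N15}, I would exploit the fact that the incident plane wave is an \emph{explicitly known} part of the total field, so the reference field carries phase information that pins down the ambiguity.

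\textbf{Step 1 (from modulus to phase via the known incident field).} Fix $n$ and write $u_j = v + u^s_j$ with $v := u^i(\cdot,d_n)+u^r(\cdot,d_n)$ the known ``reference'' field, which is a fixed entire solution of the Helmholtz equation independent of $j$. Expanding $|u_j|^2 = |v|^2 + 2\,\Rt\!\left(\overline{v}\,u^s_j\right) + |u^s_j|^2$, the hypothesis gives, for all $x\in\Omega$,
\be\label{pp1}
2\,\Rt\!\left(\overline{v(x)}\big(u^s_1(x,d_n)-u^s_2(x,d_n)\big)\right) + |u^s_1(x,d_n)|^2 - |u^s_2(x,d_n)|^2 = 0.
\en
Set $w := u^s_1(\cdot,d_n)-u^s_2(\cdot,d_n)$. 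To handle the quadratic terms I would use a high-frequency / far-region argument, or equivalently a scaling argument in the spirit of \cite{N15}: since $v=e^{ikx\cdot d_n}-e^{ikx\cdot d_n'}$ oscillates while $u^s_j$ are smooth, by further varying the incident direction in a small neighborhood (the directions $d_n$ being dense enough, or by using analyticity) one can separate the linear term $\Rt(\overline v\, w)$ from the quadratic remainder, forcing $\Rt(\overline v\, w)=0$ on $\Omega$. Replacing $u^i(\cdot,d_n)$ by $u^i(\cdot,d_n)$ multiplied by a constant phase $e^{i\pi/2}$ is not available here, but one may instead note that $v$ and $iv$ are linearly independent as functions (their ratio is nonconstant), so by an approximation/density argument over incident directions one likewise obtains $\Rt(\overline{iv}\,w) = \I(\overline v\, w)=0$ on $\Omega$. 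Hence $\overline{v}\,w = 0$ on $\Omega$; since $v$ is a nonzero real-analytic function its zero set has empty interior, so $w\equiv 0$ on a nonempty open subset of $\Omega$.

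\textbf{Step 2 (analytic continuation of the scattered field).} Both $u^s_1(\cdot,d_n)$ and $u^s_2(\cdot,d_n)$ are real-analytic solutions of the Helmholtz equation in the connected open set $D_{+,1}\cap D_{+,2}$ (which contains $\Omega$ and extends to infinity above the surfaces, in particular contains $\R^2_+$ minus a compact set), and $w$ vanishes on a nonempty open subset. By unique continuation, $w\equiv 0$ in the connected component of $D_{+,1}\cap D_{+,2}$ containing $\Omega$, hence in particular for all $x$ with $|x|$ large. Therefore $u^s_1(x,d_n)=u^s_2(x,d_n)$ for all large $|x|$.

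\textbf{Step 3 (matching far-field patterns and invoking Theorem \ref{thm-uni-full}).} Using the asymptotics \eqref{eq96} (or Lemma \ref{le3}), equality of $u^s_1$ and $u^s_2$ in the far region forces $u^\infty_1(\hat x,d_n)=u^\infty_2(\hat x,d_n)$ for all $\hat x\in\Sp^1_+$. Since this holds for the countably many distinct directions $d_n\in\Sp^1_-$, Theorem \ref{thm-uni-full} yields $\G_1=\G_2$, completing the proof.

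The main obstacle is Step 1: rigorously eliminating the quadratic terms $|u^s_1|^2-|u^s_2|^2$ in \eqref{pp1} and recovering \emph{both} the real and imaginary parts of $\overline v\,w$. This is exactly the technical heart of the phaseless-to-phased reduction and is where the argument of \cite{N15} (perturbing/varying the incident plane wave and using its explicit oscillatory structure together with density of the directions $\{d_n\}$) must be adapted to the locally-rough-surface geometry; everything after that is standard unique continuation plus the already-established far-field uniqueness.
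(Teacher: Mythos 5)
Your Steps 2 and 3 are routine, but Step 1 --- which you yourself identify as the heart of the matter --- is a genuine gap, and the mechanism you sketch for closing it is not available under the hypotheses. The identity $2\,\Rt\big(\overline{v}\,w\big)+|u^s_1|^2-|u^s_2|^2=0$ on $\Omega$ holds direction-by-direction, and the theorem only assumes countably many \emph{distinct} directions $d_n\in\Sp^1_-$, with no density; moreover, even with a dense family, varying $d$ changes the quadratic terms $|u^s_j(\cdot,d)|^2$ together with the linear term, so there is no evident way to ``separate'' them, and no way at all to manufacture the second equation $\It(\overline{v}\,w)=0$ (you cannot multiply the physical incident field by $i$). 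Recovering the phase of $u^s_1-u^s_2$ from modulus data on a bounded set $\Omega$ for a single fixed incident wave is exactly the phase-retrieval obstruction, and your proposal does not supply the tool that overcomes it; as written, the argument does not go through.

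The paper's proof removes the quadratic terms not by varying the direction but by exploiting decay at infinity, for each fixed $d_n$ separately. Since $|u_l(\cdot,d_n)|^2$ is real-analytic in $x$ above both surfaces, the identity $|u_1|=|u_2|$ extends from $\Omega$ to all of $\R^2_+\setminus\overline{B_R}$. Subtracting the known term $|u^i+u^r|^2$ and multiplying by $|x|^{1/2}$, the contributions of $|u^s_l|^2$ and of the residuals in the expansion $u^s_l=e^{ik|x|}|x|^{-1/2}u^\infty_l(\hat{x},d_n)+O(|x|^{-3/2})$ (Lemma \ref{le3}, i.e.\ (\ref{eq96})) are $O(|x|^{-1/2})$ and disappear in the limit along a ray of direction $\hat{x}$; what survives are the oscillatory cross terms $r_l(\hat{x},d_n)\sin(k\hat{x}_2d_2|x|)\sin\big(\theta_l(\hat{x},d_n)+k(1-\hat{x}_1d_1)|x|\big)$, where $u^\infty_l=r_le^{i\theta_l}$. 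Sampling $|x|$ along two sequences $(\g^{(k)}_0+2\pi p_j)/\beta$, $k=1,2$, with $\sin(\g^{(1)}_0-\g^{(2)}_0)\neq0$ --- Kronecker's approximation theorem handling the case where $\alpha/\beta$ is irrational --- yields both $r_1\sin\theta_1=r_2\sin\theta_2$ and $r_1\cos\theta_1=r_2\cos\theta_2$, i.e.\ $u^\infty_1(\cdot,d_n)=u^\infty_2(\cdot,d_n)$ for every single $d_n$, after which Theorem \ref{thm-uni-full} concludes. Note that the paper never needs $u^s_1=u^s_2$ prior to the far-field identity and uses the countably many directions only at that final step; if you want to salvage your outline, Step 1 must be replaced by an argument of this asymptotic type rather than a density-of-directions argument.
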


\begin{proof}
Fix $d=d_n$ for an arbitrary $n\in\N$ and set $d=(d_1,d_2)$. Since $|u_1(x,d)|=|u_2(x,d)|$ for all $x\in\Om$,
it follows from the analyticity of $|u_l(x,d)|^2$, $l=1,2$, with respect to $x\in\R^2_+\ba\ov{B_R}$ that
\be\label{eq34}
|u_1(x,d)|=|u_2(x,d)|\quad\textrm{for}\;\;x\in\R^2_+\ba\ov{B_R}.
\en
Noting that $u_l=u^i+u^r+u^s_l$, $l=1,2$, we have
\be\label{eq31}
|u_l|^2=|u^i+u^r+u^s_l|^2=|u^i+u^r|^2+|u^s_l|^2+2\Rt(u^i\ov{u^s_l})+2\Rt(u^r\ov{u^s_l}).
\en
Now, by Lemma \ref{le3} we know that for $x\in D_{+,l}$,
\be\label{eq28}
u^s_l(x,d)=\frac{e^{ik|x|}}{|x|^{1/2}}u^\infty_l(\hat{x},d)+u^s_{l,Res}(x,d),\;\;l=1,2
\en
with
\be\label{eq29}
|u^s_{l,Res}(x,d)|\le{C}{|x|^{-3/2}},\qquad
|u^s_l(x,d)|\le{C}{|x|^{-1/2}}
\en
for $|x|$ large enough.

Write
\be\label{eq30}
u^\infty_l(\hat{x},d)=r_l(\hat{x},d)e^{i\theta_l(\hat{x},d)},\quad l=1,2,
\en
where $r_l(\hat{x},d),\theta_l(\hat{x},d)$ are real-valued functions with $r_l\ge0$ and $\theta_l\in[0,2\pi]$.
Then, by inserting (\ref{eq28}) and (\ref{eq30}) into (\ref{eq31}) we obtain that for $l=1,2$,
\ben
|u_l(x,d)|^2&=&|u^i(x,d)+u^r(x,d)|^2+|u^s_l(x,d)|^2+2\Rt\left(u^i(x,d)\ov{u^s_{l,Res}(x,d)}\right)\\
&&+2\Rt\left(u^i(x,d)\frac{e^{-ik|x|}}{|x|^{1/2}}r_l(\hat{x},d)e^{-i\theta_l(\hat{x},d)}\right)
+2\Rt\left(u^r(x,d)\ov{u^s_{l,Res}(x,d)}\right)\\
&&+2\Rt\left(u^r(x,d)\frac{e^{-ik|x|}}{|x|^{1/2}}r_l(\hat{x},d) e^{-i\theta_l(\hat{x},d)}\right).
\enn
This yields
\be\label{eq97}
&&\frac{|x|^{1/2}}{2}\left(|u_l(x,d)|^2-|u^i(x,d)+u^r(x,d)|^2\right)\no\\
&&\qquad\quad =\Rt\left(u^i(x,d) r_l(\hat{x},d) e^{-i(k|x|+\theta_l(\hat{x},d))}\right)\no\\
&&\qquad\qquad\;\;+\Rt\left(u^r(x,d)r_l(\hat{x},d)e^{-i(k|x|+\theta_l(\hat{x},d))}\right)+v_l(x,d),
\en
where $v_l$ is given by
\ben
v_l(x,d)=|x|^{1/2}\left[\frac{1}{2}|u^s_l(x,d)|^2+\Rt\left(u^i(x,d)\ov{u^s_{l,Res}(x,d)}\right)
+\Rt\left(u^r(x,d)\ov{u^s_{l,Res}(x,d)}\right)\right].
\enn
Further, by (\ref{eq29}) we see that for $l=1,2$,
\be\label{eq38}
|v_l(x,d)|\leq\frac{C}{|x|^{1/2}}\quad\textrm{as}\;\;|x|\rightarrow+\infty.
\en
Substituting (\ref{eq32}) and (\ref{eq33}) into (\ref{eq97}) gives that for $x\in\R^2_+\ba\ov{B_R}$,
\be\label{eq35}
&&\frac{|x|^{1/2}}{4}(|u_l(x,d)|^2-|u^i(x,d)+u^r(x,d)|^2)\no\\
&&\quad=\frac{1}{2}r_l(\hat{x},d)\left[\cos(kx\cdot d-k|x|-\theta_l(\hat{x},d))
-\cos(k x\cdot d'-k|x|-\theta_l(\hat{x},d))\right]+\frac{1}{2}v_l(x,d)\no\\
&&\quad=r_l(\hat{x},d)\sin(k\hat{x}_2d_2|x|)\sin(\theta_l(\hat{x},d)
+|x|(k-k\hat{x}_1 d_1))+\frac{1}{2}v_l(x,d),\;\;l=1,2.
\en
Thus, and by (\ref{eq34}) we have that for $x\in\R^2_+\ba\ov{B_R}$,
\be\label{eq36}
&&r_1(\hat{x},d)\sin(k\hat{x}_2d_2|x|)\sin[\theta_1(\hat{x},d)+|x|(k-k\hat{x}_1d_1)]+\frac{1}{2}v_1(x,d)\no\\
&&\qquad=r_2(\hat{x},d)\sin(k\hat{x}_2d_2|x|)\sin[\theta_2(\hat{x},d)+|x|(k-k\hat{x}_1d_1)]+\frac{1}{2}v_2(x,d).
\en

Arbitrarily fix $\hat{x}=(\hat{x}_1,\hat{x}_2)\in\Sp^1_+$ and set $\alpha=k\hat{x}_2d_2$ and
$\beta=k(1-\hat{x}_1d_1)$. The equation (\ref{eq36}) then becomes
\be\no
&&r_1(\hat{x},d)\sin(\alpha|x|)\sin[\theta_1(\hat{x},d)+\beta|x|]+\frac{1}{2}v_1(x,d)\\ \label{eq37}
&&\qquad=r_2(\hat{x},d)\sin(\alpha|x|)\sin[\theta_2(\hat{x},d)+\beta|x|]+\frac{1}{2}v_2(x,d).
\en
Note that $\alpha<0,\beta>0$ since $\hat{x}=(\hat{x}_1,\hat{x}_2)\in\Sp^1_+$ and $d=(d_1,d_2)\in\Sp^1_-$.
Then we can choose $\g^{(1)}_0,\g^{(2)}_0\in\R$ such that
\be\label{eq39}
\sin\left(\frac{\alpha}{\beta}\g^{(k)}_0\right)\neq0,\;\;k=1,2,\\ \label{eq41}
\sin(\g^{(1)}_0-\g^{(2)}_0)\neq0.
\en
We now prove that
\be\label{eq40}
r_1\sin(\theta_1+\g^{(k)}_0)=r_2\sin(\theta_2+\g^{(k)}_0),\;\;k=1,2,
\en
where we write $r_l=r_l(\hat{x},d),$ $\theta_l=\theta_l(\hat{x},d)$, $l=1,2$, for simplicity.
We distinguish between the following two cases.

{\bf Case 1.} $\alpha/\beta$ is a rational number. In this case, it is easily seen that there exist $p_j\in\N$
with $j=1,2,\ldots$ such that $({\alpha}/{\beta})p_j\in\N$ and $\lim\limits_{j\rightarrow+\infty}p_j=+\infty$.
For $k=1,2$ let $x^{(k)}_j:=(\g^{(k)}_0+2\pi p_j)\hat{x}/\beta$.
Then it is easy to see that $x^{(k)}_j\in\R^2_+\ba\ov{B}_R$ for large $j$
and $\lim\limits_{j\rightarrow+\infty}|x^{(k)}_j|=+\infty$.
Thus, take $x=x^{(k)}_j$ with large $j$ in (\ref{eq37}) to obtain that
\ben
&&r_1\sin\left(\frac{\alpha}{\beta}\g^{(k)}_0\right)\sin(\theta_1+\g^{(k)}_0)+\frac{1}{2}v_1(x^{(k)}_j,d)\\
&&\qquad\quad=r_2\sin\left(\frac{\alpha}{\beta}\g^{(k)}_0\right)\sin(\theta_2+\g^{(k)}_0)
+\frac{1}{2}v_2(x^{(k)}_j,d).
\enn
The required equality (\ref{eq40}) then follows by taking $j\rightarrow+\infty$ in the above equation
and using (\ref{eq38}) and (\ref{eq39}).

{\bf Case 2.} $\alpha/\beta$ is an irrational number. In this case, by Kronecker's approximation theorem
(see, e.g., \cite[Theorem 7.7]{A90}), we know that there exist $p_j\in\N$ with $j=1,2,\ldots$ such that
$({\alpha}/{\beta})p_j=m_j+a_j$ with $m_j\in\N$, $\lim\limits_{j\rightarrow+\infty}a_j=0$
and $\lim\limits_{j\rightarrow+\infty}p_j=+\infty$. For $k=1,2$ let $x^{(k)}_j$ be defined as in Case 1.
Then, similarly as in Case 1, take $x=x^{(k)}_j$ with large $j$ in (\ref{eq37}) to deduce that
\ben
&&r_1\sin(\frac{\alpha}{\beta}\g^{(k)}_0+2\pi a_j)\sin(\theta_1+\g^{(k)}_0)+\frac{1}{2}v_1(x^{(k)}_j,d)\\
&&\qquad\quad=r_2\sin(\frac{\alpha}{\beta}\g^{(k)}_0+2\pi a_j)\sin(\theta_2+\g^{(k)}_0)
+\frac{1}{2}v_2(x^{(k)}_j,d).
\enn
Thus, (\ref{eq40}) also follows by letting $j\rightarrow+\infty$ in the above equation
and using (\ref{eq38}) and (\ref{eq39}).

Finally, it follows from (\ref{eq40}) and the arbitrariness of $\hat{x},d$ that
{\small
\ben
\left(\begin{array}{ll}
\cos\gamma^{(1)}_0(\hat{x},d_n)&\sin\gamma^{(1)}_0(\hat{x},d_n)\\
\cos\gamma^{(2)}_0(\hat{x},d_n)&\sin\gamma^{(2)}_0(\hat{x},d_n)
\end{array}\right)
\left(\begin{array}{l}
r_1(\hat{x},d_n)\sin\theta_1(\hat{x},d_n)-r_2(\hat{x},d_n)\sin\theta_2(\hat{x},d_n)\\
r_1(\hat{x},d_n)\cos\theta_1(\hat{x},d_n)-r_2(\hat{x},d_n)\cos\theta_2(\hat{x},d_n)
\end{array}\right)=0
\enn
}
for all $\hat{x}\in\Sp^1_+$ and $d_n\in\Sp^1_-$ with $n\in\N$.
Condition (\ref{eq41}) means that the determinant of the square matrix on the left of the above matrix equation
does not vanish, and so the above matrix equation only has a trivial solution, that is,
\ben
r_1(\hat{x},d_n)\sin\theta_1(\hat{x},d_n)&=&r_2(\hat{x},d_n)\sin\theta_2(\hat{x},d_n),\\
r_1(\hat{x},d_n)\cos\theta_1(\hat{x},d_n)&=&r_2(\hat{x},d_n)\cos\theta_2(\hat{x},d_n)
\enn
for all $\hat{x}\in\Sp^1_+$ and $d_n\in\Sp^1_-$ with $n\in\N$.
This implies that $u^\infty_1(\hat{x},d_n)=u^\infty_2(\hat{x},d_n)$ for all $\hat{x}\in\Sp^1_+$
and $d_n\in\Sp^1_-$ with $n\in\N$. The required result then follows from Theorem \ref{thm-uni-full}.
The proof is thus completed.
\end{proof}

\section{Direct imaging method for inverse problems}\label{sec3}
\setcounter{equation}{0}

In this section, we consider the inverse problem: Given the incident field $u^i=u^i(x,d)$,
to reconstruct the locally rough surface $\G$ from the phaseless near-field data $|u(x,d)|$
for all $x\in\pa B^+_R, d\in\Sp^1_-$ and with a fixed wave number $k$.
See FIG. \ref{fig8} for the geometry of the inverse scattering problem.
Our purpose is to develop a direct imaging method to solve this inverse problem numerically
though no rigorous uniqueness result is available yet for the inverse problem.
\begin{figure}
\centering
\includegraphics[width=3in]{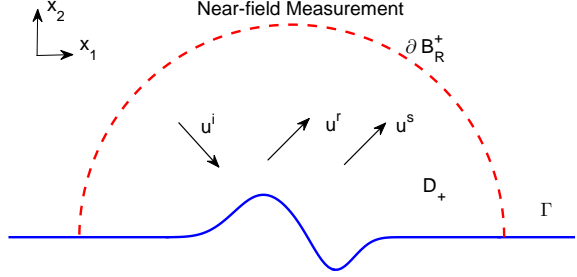}
 \vspace{-0.2in}
\caption{Inverse scattering with phaseless near-field data measured on the curve $\pa B^+_R$
}\label{fig8}
\end{figure}

We consider the imaging function
\be\label{eq3}
&&I^{Phaseless}(z)\no\\
&&\quad:=\int_{\pa B^+_R}\left|
\int_{\Sp^1_-}\left[\left(|u(x,d)|^2-2+e^{2ikx_2d_2}\right)e^{ik(x-z)\cdot d}
-e^{ik(x'-z')\cdot d}\right]ds(d)\right|^2dx\;\quad
\en
for $z\in\R^2$. In what follows, we will study the behavior of this imaging function.

Define
\be\label{eq16}
U(x,z)&:=&U_1(x,z)+U_2(x,z)+U_3(x,z),\\ \label{eq16+}
W(x,z)&:=&W_1(x,z)+W_2(x,z)+W_3(x,z)+W_4(x,z),
\en
where
\be\label{eq44}
U_1(x,z)&=&\int_{\Sp^1_-} u^s(x,d)e^{-ik z\cdot d} ds(d),\\ \label{eq45}
U_2(x,z)&=&-\int_{\Sp^1_-}e^{ik(x\cdot d'-z\cdot d)}ds(d),\\ \label{eq46}
U_3(x,z)&=&-\int_{\Sp^1_-}e^{ik(x\cdot d'-z'\cdot d)}ds(d),
\en
and
\ben
W_1(x,z)&=&\int_{\Sp^1_-}\left[u^i(x,d)\right]^2\ov{u^s(x,d)}e^{-ikz\cdot d}ds(d),\\
W_2(x,z)&=&\int_{\Sp^1_-}u^i(x,d)u^r(x,d)\overline{u^s(x,d)}e^{-ikz\cdot d}ds(d),\\
W_3(x,z)&=&\int_{\Sp^1_-}u^i(x,d)\overline{u^r(x,d)}u^s(x,d)e^{-ikz\cdot d}ds(d),\\
W_4(x,z)&=&\int_{\Sp^1_-}u^i(x,d)|u^s(x,d)|^2e^{-ikz\cdot d}ds(d).
\enn
Since $u=u^i+u^r+u^s$ and $|u^i|=|u^r|=1$, by a direct calculation (\ref{eq3}) becomes
\be\label{eq89}
I^{Phaseless}(z)=\int_{\pa B^+_R}|U(x,z)+W(x,z)|^2dx.
\en

We need the following result for oscillatory integrals proved in \cite{CH2017}.

\begin{lemma}[Lemma 3.9 in \cite{CH2017}]\label{le1}
For any $-\infty<a<b<\infty$ let $u\in C^2[a,b]$ be real-valued and satisfy that
$|u'(t)|\geq1$ for all $t\in(a,b)$. Assume that $a=x_0<x_1<\cdots<x_N=b$ is a division of $(a,b)$
such that $u'$ is monotone in each interval $(x_{i-1},x_i)$, $i=1,\ldots,N$.
Then for any function $\phi$ defined on $(a,b)$ with integrable derivative and for any $\la>0$,
\ben
\left|\int_a^b e^{i\la u(t)}\phi(t)dt\right|\leq(2N+2)\la^{-1}\left[|\phi(b)|+\int^b_a|\phi'(t)|dt\right].
\enn
\end{lemma}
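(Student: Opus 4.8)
The plan is to reduce the oscillatory integral to a sum of integrals over the subintervals on which $u'$ is monotone, and on each such piece to integrate by parts after substituting $s = u(t)$. First I would write
\[
\int_a^b e^{i\la u(t)}\phi(t)\,dt = \sum_{i=1}^N \int_{x_{i-1}}^{x_i} e^{i\la u(t)}\phi(t)\,dt,
\]
so it suffices to bound each summand by $2\la^{-1}\bigl[\sup_{[x_{i-1},x_i]}|\phi| + \int_{x_{i-1}}^{x_i}|\phi'(t)|\,dt\bigr]$, since the $N$ supremum terms are each dominated by a telescoping-type estimate that will in the end collect into the stated factor $(2N+2)$ once we also control $|\phi|$ at the endpoints via $\phi(x) = \phi(b) - \int_x^b \phi'(t)\,dt$, giving $\sup|\phi| \le |\phi(b)| + \int_a^b|\phi'|$. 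On a single interval $(x_{i-1},x_i)$ the derivative $u'$ is continuous, monotone, and of modulus $\ge 1$, hence either $u' \ge 1$ throughout or $u' \le -1$ throughout; in either case $u$ is a $C^2$ diffeomorphism of $[x_{i-1},x_i]$ onto its image $[c_i,d_i]$ (or $[d_i,c_i]$), with $|d_i - c_i| \ge x_i - x_{i-1}$ but more importantly with inverse $t = t(s)$ satisfying $t'(s) = 1/u'(t(s))$, a monotone function of $s$ of modulus $\le 1$.

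Next I would change variables $s = u(t)$ to get $\int_{x_{i-1}}^{x_i} e^{i\la u(t)}\phi(t)\,dt = \int_{c_i}^{d_i} e^{i\la s}\,\psi_i(s)\,ds$ with $\psi_i(s) = \phi(t(s))\,t'(s)$, and then integrate by parts in $s$:
\[
\int_{c_i}^{d_i} e^{i\la s}\psi_i(s)\,ds
= \frac{1}{i\la}\Bigl[e^{i\la s}\psi_i(s)\Bigr]_{c_i}^{d_i}
- \frac{1}{i\la}\int_{c_i}^{d_i} e^{i\la s}\psi_i'(s)\,ds.
\]
The boundary term is bounded by $\la^{-1}\bigl(|\psi_i(d_i)| + |\psi_i(c_i)|\bigr) \le \la^{-1}\bigl(|\phi(x_i)| + |\phi(x_{i-1})|\bigr)$ since $|t'| \le 1$. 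For the remaining integral the key point — and the one requiring care — is that $\psi_i = (\phi\circ t)\cdot t'$ need not be differentiable just because $\phi$ has an integrable derivative and $t'$ is merely monotone; so rather than differentiating the product I would use monotonicity of $t'$ directly. Writing $\int_{c_i}^{d_i} e^{i\la s}\psi_i(s)\,ds$ back in the $t$ variable as $\int_{x_{i-1}}^{x_i} e^{i\la u(t)}\phi(t)\,dt$ and applying a second integration by parts there against the antiderivative $F_i(t) := \int_{x_{i-1}}^t e^{i\la u(\tau)}\,d\tau = \int_{u(x_{i-1})}^{u(t)} e^{i\la s} t'(s)\,ds$, one has $|F_i(t)| \le \la^{-1}\bigl(|t'(u(t))| + |t'(u(x_{i-1}))| + \mathrm{Var}(t')\bigr)$ by the second mean value theorem for integrals; since $t'$ is monotone its total variation on the interval is $|t'(d_i)-t'(c_i)| \le 2$, and in fact $|F_i(t)| \le 2\la^{-1}$ suffices after a little bookkeeping (the endpoint values of $|t'|$ are each $\le 1$). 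Then $\int_{x_{i-1}}^{x_i} e^{i\la u}\phi\,dt = [F_i\phi]_{x_{i-1}}^{x_i} - \int_{x_{i-1}}^{x_i} F_i(t)\phi'(t)\,dt$, giving the bound $2\la^{-1}|\phi(x_i)| + 2\la^{-1}\int_{x_{i-1}}^{x_i}|\phi'(t)|\,dt$ on the $i$-th piece.

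Summing over $i = 1,\dots,N$, the derivative terms assemble to $2\la^{-1}\int_a^b|\phi'(t)|\,dt$, and the boundary terms contribute $2\la^{-1}\sum_{i=1}^N |\phi(x_i)|$; bounding each $|\phi(x_i)| \le |\phi(b)| + \int_a^b|\phi'|$ gives $2N\la^{-1}|\phi(b)| + 2N\la^{-1}\int_a^b|\phi'|$, and combining with the previous line yields the total $(2N+2)\la^{-1}\bigl[|\phi(b)| + \int_a^b|\phi'(t)|\,dt\bigr]$, as claimed — the constants being generous enough that one need not optimize. I expect the main obstacle to be exactly the regularity mismatch flagged above: $u'$ (hence $t'$) is only assumed monotone, not $C^1$, so one cannot naively differentiate under the integral sign, and the clean way around it is the second mean value theorem for integrals applied to the monotone factor $t'$, which is what produces the uniform $O(\la^{-1})$ bound on the antiderivatives $F_i$ with a constant controlled by $\mathrm{Var}(t')$. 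The division into monotonicity intervals is precisely what makes this variation finite and bounded by an absolute constant on each piece, which is why the factor $N$ enters the final estimate.
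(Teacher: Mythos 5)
The paper offers no proof of this lemma at all: it is quoted verbatim (with its constant) from Lemma 3.9 of \cite{CH2017}, so your argument can only be compared with the standard proof in the literature, which it essentially reproduces. Your scheme is the classical van der Corput first-derivative estimate on each monotonicity interval, followed by integration by parts against the amplitude $\phi$ via the antiderivatives $F_i$, and the way you dodge the regularity mismatch is right: never differentiate the merely monotone factor $t'=1/u'$, but control $\int e^{i\la s}t'(s)\,ds$ through its bounded variation (your displayed bound $|F_i(t)|\le \la^{-1}\big(|t'(u(t))|+|t'(u(x_{i-1}))|+\mathrm{Var}(t')\big)$ is exactly a Riemann--Stieltjes integration by parts, which is the clean tool here rather than the second mean value theorem, whose naive application to real and imaginary parts separately only gives $2\sqrt{2}\,\la^{-1}$). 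The one place where your ``little bookkeeping'' genuinely matters is the per-piece constant: to land on the stated $(2N+2)$ you need $|F_i(t)|\le 2\la^{-1}$, and bounding the three terms separately by $1+1+2$ (or even $1+1+1$) gives only $4\la^{-1}$ (resp.\ $3\la^{-1}$), which would inflate the final constant to $4(N+1)$ or $3(N+1)$; the fix is the fact you already recorded, that $t'$ keeps a fixed sign on each piece, so $|t'(u(t))|+|t'(u(x_{i-1}))|+|t'(u(t))-t'(u(x_{i-1}))|=2\max\le 2$. With that observation your summation indeed yields $2N\la^{-1}|\phi(b)|+(2N+2)\la^{-1}\int_a^b|\phi'|\le(2N+2)\la^{-1}\big[|\phi(b)|+\int_a^b|\phi'(t)|dt\big]$. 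As a remark, the change of variables $s=u(t)$ can be skipped entirely: integrating $\int e^{i\la u}dt$ by parts against $(i\la u')^{-1}\frac{d}{dt}e^{i\la u}$ and using the same same-sign cancellation for the monotone function $1/u'$ gives $2\la^{-1}$ per piece directly, and working with the single global antiderivative $F(t)=\int_a^t e^{i\la u(\tau)}d\tau$, $|F|\le 2N\la^{-1}$, even produces the slightly better constant $2N$.
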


With the aid of Lemma \ref{le1}, we can obtain the following lemma.

\begin{lemma}\label{le2}
Let $x\in\R^2_+,\,d\in\Sp^1_-$. For $\hat{x}=x/|x|\in\Sp^1_+$ assume that
$f(\hat{x},\cdot),$ $g(\hat{x},\cdot)\in C^1(\ov{\Sp^1_-})$ and define
\ben\label{eq4}
F(x):=\int_{\Sp^1_-}e^{ikx\cdot d}f(\hat{x},d)ds(d),\qquad
G(x):=\int_{\Sp^1_-}e^{ikx\cdot d'}g(\hat{x},d)ds(d).
\enn
Then for all $x\in{\R^2_+}$ with $|x|$ large enough we have
\be\label{eq80}
|F(x)|\leq C{\|f(\hat{x},\cdot)\|_{C^1(\ov{\Sp^1_-})}}{|x|^{-1/2}},\\ \label{eq81}
|G(x)|\leq C{\|g(\hat{x},\cdot)\|_{C^1(\ov{\Sp^1_-})}}{|x|^{-1/2}},
\en
where $C>0$ is a constant independent of $x$.
\end{lemma}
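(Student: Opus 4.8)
The plan is to reduce both surface integrals to one–dimensional oscillatory integrals over the parameter circle and then to estimate them by the van der Corput type bound of Lemma \ref{le1} away from the stationary points of the phase, using the trivial estimate on a shrinking neighbourhood of those points.

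First I would parametrise $\Sp^1_-$ by $d=d(t):=(\cos t,\sin t)$, $t\in[\pi,2\pi]$, so that $ds(d)=\md t$, and write $\hat{x}=(\cos\theta,\sin\theta)$ with $\theta=\theta_{\hat{x}}$. Then $x\cdot d=|x|\cos(t-\theta)$, so, setting $\lambda:=k|x|$ and $u(t):=\cos(t-\theta)\in C^2[\pi,2\pi]$,
\[
F(x)=\int_{\pi}^{2\pi}e^{i\lambda u(t)}\,\wid f(t)\,\md t,\qquad \wid f(t):=f(\hat{x},d(t)),
\]
where $\|\wid f\|_{C^1[\pi,2\pi]}\le C\|f(\hat{x},\cdot)\|_{C^1(\ov{\Sp^1_-})}$ with an absolute constant $C$, since $t\mapsto d(t)$ has derivatives bounded by $1$; note also $u'(t)=-\sin(t-\theta)$ and $u''(t)=-\cos(t-\theta)$. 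The key step is then a decomposition of $[\pi,2\pi]$ tailored so that all constants stay independent of $\theta$. Put $E:=\{t\in[\pi,2\pi]:|u'(t)|<\lambda^{-1/2}\}$. The zeros of $u'$ in $[\pi,2\pi]$ are the points with $t-\theta\in\pi\Z$, at most two of them, and near each such point $|u'|$ grows linearly; hence, for $|x|$ large, $E$ is a union of at most two intervals with $|E|\le C\lambda^{-1/2}$ (using $\arcsin s\le\tfrac{\pi}{2}s$). On $E$ we simply bound $\big|\int_E e^{i\lambda u}\wid f\,\md t\big|\le|E|\,\|\wid f\|_\infty\le C\lambda^{-1/2}\|f(\hat{x},\cdot)\|_{C^1(\ov{\Sp^1_-})}$. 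On each of the at most three complementary intervals $J\subset[\pi,2\pi]\setminus E$ we have $|u'|\ge\lambda^{-1/2}$ throughout; writing $\mu:=\lambda^{1/2}$ and $w:=\mu u$, so that $\lambda u=\mu w$ and $|w'|\ge1$ on $J$, and noting that $w'=\mu u'$ is monotone on at most three subintervals of $J$ (the zeros of $u''$ in $J$, at points $t-\theta\in\tfrac{\pi}{2}+\pi\Z$, number at most two), Lemma \ref{le1} gives $\big|\int_J e^{i\mu w}\wid f\,\md t\big|\le C\mu^{-1}\big(\|\wid f\|_\infty+\|\wid f'\|_{L^1(J)}\big)\le C\lambda^{-1/2}\|f(\hat{x},\cdot)\|_{C^1(\ov{\Sp^1_-})}$. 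Summing these contributions and recalling $\lambda=k|x|$ yields (\ref{eq80}).

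For $G$ I would use that $x\cdot d'=x'\cdot d$ with $x'=(x_1,-x_2)$ and $|x'|=|x|$, so that $G(x)=\int_{\Sp^1_-}e^{ik\,x'\cdot d}\,g(\hat{x},d)\,ds(d)$ has exactly the form treated above, with $x$ replaced by $x'$ and $f(\hat{x},\cdot)$ by $g(\hat{x},\cdot)$; the argument for $F$ never used $x\in\R^2_+$, only that $|x|$ is large and $\hat{x}$ is a unit vector, and the constant it produced depends on neither the point nor its argument, so (\ref{eq81}) follows at once. The main obstacle — and the reason I organise the decomposition through the single condition $|u'|<\lambda^{-1/2}$ rather than around the stationary point of $u$ — is uniformity in $\hat{x}\in\Sp^1_+$: the interior stationary point $t=\theta+\pi$ of $u$ slides to the boundary of $[\pi,2\pi]$ as $\hat{x}$ approaches the equator and a second near-stationary point appears at the opposite endpoint, so a splitting tied to a fixed stationary point would degenerate, whereas with the above choice the number of pieces, the monotonicity count entering Lemma \ref{le1}, and the bound $|E|\le C\lambda^{-1/2}$ are all manifestly independent of $\theta$.
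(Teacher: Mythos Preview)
Your proof is correct and rests on the same core ingredients as the paper's: parametrise $\Sp^1_-$ by an angle, isolate a $\delta$-neighbourhood of the (near-)stationary points where one bounds trivially, apply Lemma~\ref{le1} on the complement after rescaling so that the phase derivative is at least $1$, and optimise with $\delta\sim|x|^{-1/2}$.

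The organisational difference is worth noting. The paper handles uniformity in $\hat{x}$ by a case split on $\theta_{\hat{x}}$: when $\theta_{\hat{x}}\in[0,\delta]\cup[\pi-\delta,\pi]$ it excises $2\delta$-neighbourhoods of \emph{both} endpoints of $[\pi,2\pi]$ (since the interior stationary point is then near an endpoint and the other endpoint is near-stationary as well), whereas for $\theta_{\hat{x}}\in[\delta,\pi-\delta]$ it excises a $\delta$-neighbourhood of the interior stationary point only. Your single level-set decomposition $E=\{|u'|<\lambda^{-1/2}\}$ does the same job without cases, automatically merging the interior stationary point with an endpoint when $\theta_{\hat{x}}$ approaches $0$ or $\pi$; this is why your count ``$E$ is a union of at most two intervals'' is indeed correct and uniform in $\theta$. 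Your treatment of $G$ via the identity $x\cdot d'=x'\cdot d$ is also a clean explicit reduction to the $F$-estimate (the paper simply declares the proof of (\ref{eq81}) ``similar''). Neither approach gains or loses anything analytically---both yield the same $|x|^{-1/2}$ bound with constants independent of $\hat{x}$---but your packaging is tidier.
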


\begin{proof}
We only prove (\ref{eq80}). The proof of (\ref{eq81}) is similar.

Let $\delta>0$ be small enough such that $\sin\delta\ge\delta/2$ and let $|x|$ be large enough.
Let $x=|x|\hat{x}=|x|(\cos\theta_{\hat{x}},\sin\theta_{\hat{x}})$,
$d=(\cos\theta_d,\sin\theta_d)$ with $\theta_{\hat{x}}\in[0,\pi]$, $\theta_d\in[\pi,2\pi]$, and
define $\wid{f}(\theta_{\hat{x}},\theta_d):=f(\hat{x},d)$ for $\theta_{\hat{x}}\in [0,\pi]$ and
$\theta_d\in[\pi,2\pi]$. Then it follows that
\be\label{eq7}
C_1\|f(\hat{x},\cdot)\|_{C^1(\ov{\Sp^1_-})}\leq\|\wid{f}(\theta_{\hat{x}},\cdot)\|_{C^1[\pi,2\pi]}
\le C_2\|f(\hat{x},\cdot)\|_{C^1(\ov{\Sp^1_-})}
\en
and
\be\label{eq8}
F(x)=\int^{2\pi}_{\pi}e^{ik|x|\cos(\theta_d-\theta_{\hat{x}})}\tilde{f}(\theta_{\hat{x}},\theta_d)d\theta_d.
\en
We distinguish between the following two cases.

{\bf Case 1.} $\theta_{\hat{x}}\in[0,\delta]\cup[\pi-\delta,\pi]$. In this case, we rewrite (\ref{eq8}) as
\be\label{eq98}
F(x)&=&\int^{2\pi-2\delta}_{\pi+2\delta}e^{ik|x|\cos(\theta_d-\theta_{\hat{x}})}
\wid{f}(\theta_{\hat{x}},\theta_d)d\theta_d
+\int^{\pi+2\delta}_{\pi}e^{ik|x|\cos(\theta_d-\theta_{\hat{x}})}
\wid{f}(\theta_{\hat{x}},\theta_d)d\theta_d\no\\
&& +\int^{2\pi}_{2\pi-\delta}e^{ik|x|\cos(\theta_d-\theta_{\hat{x}})}
  \wid{f}(\theta_{\hat{x}},\theta_d)d\theta_d\no\\
&:=&I_1+II_1+III_1.
\en
Set $u(\theta_d)=2\cos(\theta_d-\theta_{\hat{x}})/\delta$.
Then $u'(\theta_d)=-2\sin(\theta_d-\theta_{\hat{x}})/\delta$, and so,
for $\theta_{\hat{x}}\in[0,\delta]\cup[\pi-\delta,\pi]$ and $\theta_d\in[\pi+2\delta,2\pi-2\delta]$ we have
\ben
|u'(\theta_d)|=2|\sin(\theta_d-\theta_{\hat{x}})|/\delta\geq 2|\sin\delta|/\delta
\ge\frac{2}{\delta}\frac{\delta}{2}=1
\enn
and $u'(\theta_d)$ is monotone in $[\pi+2\delta,2\pi-2\delta]$.
Thus, by Lemma \ref{le1} it follows that
\be\label{eq5}
|I_1|=\left|\int^{2\pi-2\delta}_{\pi+2\delta}e^{i\frac{\delta k|x|}{2}u(\theta_d)}
\wid{f}(\theta_{\hat{x}},\theta_d)d\theta_d\right|
\le C\frac{\|\wid{f}(\theta_{\hat{x}},\cdot)\|_{C^1[\pi,2\pi]}}{\delta|x|}.
\en
It is easy to obtain that
\be\label{eq6}
|II_1|+|III_1|\leq {C}{\delta}\|\wid{f}(\theta_{\hat{x}},\cdot)\|_{C^1[\pi,2\pi]}.
\en
Combining (\ref{eq7}), (\ref{eq98}), (\ref{eq5}) and (\ref{eq6}) gives
\ben
|F(x)|\leq C\left(\frac{1}{\delta|x|}+\delta\right)
\|\wid{f}(\theta_{\hat{x}},\cdot)\|_{C^1[\pi,2\pi]}
\leq C\left(\frac{1}{\delta|x|}+\delta\right)\|f(\hat{x},\cdot)\|_{C^1(\ov{\Sp^1_-})}.
\enn
From this (\ref{eq80}) follows immediately on taking $\delta=|x|^{-1/2}$.

{\bf Case 2.} $\theta_{\hat{x}}\in[\delta,\pi-\delta]$. In this case, we rewrite (\ref{eq8}) as
\ben
F(x)&=&\int^{\theta_{\hat{x}}-\delta}_{0}e^{ik|x|\cos(\theta_d-\theta_{\hat{x}})}
\wid{f}(\theta_{\hat{x}},\theta_d)d\theta_d
+\int^{\pi}_{\theta_{\hat{x}}+\delta}e^{ik|x|\cos(\theta_d-\theta_{\hat{x}})}
\wid{f}(\theta_{\hat{x}},\theta_d)d\theta_d\\
&&+\int^{\theta_{\hat{x}}+\delta}_{\theta_{\hat{x}}-\delta}e^{ik|x|\cos(\theta_d-\theta_{\hat{x}})}
\wid{f}(\theta_{\hat{x}},\theta_d)d\theta_d\\
&:=&I_2+II_2+III_2.
\enn
Similarly as in the estimation of $I_1$, it is deduced that
\ben
|I_2|+|II_2|\leq \frac{C}{\delta|x|}\|\wid{f}(\theta_{\hat{x}},\cdot)\|_{C^1[\pi,2\pi]}.
\enn
Now, it is straightforward to see that
\ben
|III_2|\le C\delta\|\wid{f}(\theta_{\hat{x}},\cdot)\|_{C^1[\pi,2\pi]}.
\enn
Then we arrive at
\ben
|F(x)|\leq C\left(\frac{1}{\delta|x|}+\delta\right)\|\wid{f}(\theta_{\hat{x}},\cdot)\|_{C^1[\pi,2\pi]}
\leq C\left(\frac{1}{\delta|x|}+\delta\right)\|f(\hat{x},\cdot)\|_{C^1(\ov{\Sp^1_-})}.
\enn
Taking $\delta=|x|^{-1/2}$ in the above inequality gives (\ref{eq80}).
The proof is thus completed.
\end{proof}

We also need the following reciprocity relation of the far-field pattern.

\begin{lemma}\label{lem13}
For $\hat{x}\in{\Sp^1_+},\;d\in{\Sp^1_-}$ let $u^\infty(\hat{x},d)$ denote the far-field pattern
of the scattering solution to the problem  (\ref{eq1})-(\ref{rc}).
Then $u^\infty(\hat{x},d)=u^\infty(-d,-\hat{x})$ for all $\hat{x}\in{\Sp^1_+},\;d\in{\Sp^1_-}$.
\end{lemma}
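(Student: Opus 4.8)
The plan is to derive the reciprocity relation from Green's second identity, following the classical argument for obstacle scattering in free space but adapting it to the half-plane geometry and to the presence of the reflected wave $u^r$. Fix $\hat x\in\Sp^1_+$ and $d\in\Sp^1_-$ and set $u_1:=u(\cdot,d)$, $u_2:=u(\cdot,-\hat x)$; note that $-\hat x\in\Sp^1_-$ is an admissible incident direction and $-d\in\Sp^1_+$ an admissible observation direction, so that both quantities in the statement are well defined. Put $u^{in}_j:=u^i(\cdot,d_j)+u^r(\cdot,d_j)$ with $d_1=d$, $d_2=-\hat x$, so that $u_j=u^{in}_j+u^s_j$; by (\ref{eq32})--(\ref{eq33}) each $u^{in}_j$ is an entire solution of the Helmholtz equation vanishing on $\{x_2=0\}$ (indeed $u^{in}_1(x)=e^{ikx\cdot d}-e^{ikx\cdot d'}$), and each $u^s_j$ satisfies (\ref{rc}). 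For $R$ large enough that $\G_p\subset B_R$ and $\pa B^+_R$ coincides with the upper semicircle $\{|x|=R,\,x_2>0\}$, I would apply Green's second identity to $u_1$ and $u_2$ on $\Omega_R:=B_R\cap D_+$, whose boundary consists of the part $\G\cap\ov{B_R}$ of $\G$ together with $\pa B^+_R$. Since $u_1=u_2=0$ on $\G$, the contribution from $\G\cap\ov{B_R}$ drops out and there remains
\[
\int_{\pa B^+_R}\Big(u_1\frac{\pa u_2}{\pa r}-u_2\frac{\pa u_1}{\pa r}\Big)\,ds=0.
\]

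Substituting $u_j=u^{in}_j+u^s_j$ splits this integrand into four pieces. The reference--reference piece $\int_{\pa B^+_R}(u^{in}_1\,\pa_r u^{in}_2-u^{in}_2\,\pa_r u^{in}_1)\,ds$ vanishes identically: applying Green's second identity to the entire solutions $u^{in}_1,u^{in}_2$ on the half-disk $B_R\cap\R^2_+$, the only remaining boundary piece is a segment of $\{x_2=0\}$, on which both $u^{in}_j$ vanish. The scattered--scattered piece $\int_{\pa B^+_R}(u^s_1\,\pa_r u^s_2-u^s_2\,\pa_r u^s_1)\,ds$ tends to $0$ as $R\to\infty$, since $\pa_r u^s_j-iku^s_j=o(|x|^{-1/2})$ uniformly by (\ref{rc}), $|u^s_j|=O(|x|^{-1/2})$ by Lemma \ref{le3}, and $\pa B^+_R$ has length $\pi R$. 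The heart of the matter is the two mixed pieces
\[
T_2:=\int_{\pa B^+_R}\big(u^{in}_1\,\pa_r u^s_2-u^s_2\,\pa_r u^{in}_1\big)\,ds,\qquad
T_3:=\int_{\pa B^+_R}\big(u^s_1\,\pa_r u^{in}_2-u^{in}_2\,\pa_r u^s_1\big)\,ds.
\]

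To treat $T_2$ I would substitute the asymptotics of Lemma \ref{le3}, $u^s_2(R\hat y)=R^{-1/2}e^{ikR}u^\infty(\hat y,-\hat x)+O(R^{-3/2})$ together with the corresponding formula for $\pa_r u^s_2$, writing $\hat y=(\cos\theta,\sin\theta)$ for $\theta\in(0,\pi)$, and also $u^{in}_1(x)=e^{ikx\cdot d}-e^{ikx\cdot d'}$; this reduces $T_2$ to $ik\sqrt R$ times the difference of the oscillatory integrals $\int_0^\pi e^{ikR(1+\hat y\cdot d)}(1-\hat y\cdot d)u^\infty(\hat y,-\hat x)\,d\theta$ and $\int_0^\pi e^{ikR(1+\hat y\cdot d')}(1-\hat y\cdot d')u^\infty(\hat y,-\hat x)\,d\theta$, which I would then analyse by the method of stationary phase (in the spirit of Lemmas \ref{le2} and \ref{lem5}), using the uniform $C^1$ bound on $u^\infty(\cdot,-\hat x)$ from Lemma \ref{le3}. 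The key bookkeeping is this: the phase $kR(1+\hat y\cdot d)$ has on the open upper semicircle the single stationary point $\hat y=-d\in\Sp^1_+$ (its companion $\hat y=d$ lies in $\Sp^1_-$, off the arc), at which the phase value is $0$ and the amplitude equals $2u^\infty(-d,-\hat x)$, so the first integral contributes a leading term of order $R^{-1/2}$ proportional to $u^\infty(-d,-\hat x)$; the phase $kR(1+\hat y\cdot d')$ has on the arc only the stationary point $\hat y=d'\in\Sp^1_+$, at which the factor $1-\hat y\cdot d'$ vanishes, so that integral, like the endpoint contributions of both integrals, is $O(R^{-1})$ and disappears in the limit. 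Tracking the constants gives $T_2\to C_k\,u^\infty(-d,-\hat x)$ with the fixed nonzero constant $C_k=2i\sqrt{2\pi k}\,e^{i\pi/4}$, and the symmetric computation, with $u^s$ and $u^{in}$ and with $d$ and $-\hat x$ interchanged, gives $T_3\to-C_k\,u^\infty(\hat x,d)$. Passing to the limit $R\to\infty$ in the split identity then yields $C_k\big(u^\infty(-d,-\hat x)-u^\infty(\hat x,d)\big)=0$, i.e.\ $u^\infty(\hat x,d)=u^\infty(-d,-\hat x)$.

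I expect the main obstacle to be precisely this stationary-phase analysis of $T_2$ and $T_3$: one must determine which stationary points of the two plane-wave phases actually fall inside the integration arc, and verify that the ``reflected'' plane wave $e^{ikx\cdot d'}$ contributes only at lower order because its amplitude has a zero at its stationary point $\hat y=d'$; everything else is routine. As an alternative that avoids stationary phase altogether, one could instead use the far-field representation of $u^s$ through the half-plane Dirichlet Green's function $G(x,y)=\Phi(x,y)-\Phi(x,y')$ (with $\Phi$ the free-space fundamental solution): its far-field pattern is a constant multiple of $u^i(y,-\hat x)+u^r(y,-\hat x)$, which vanishes on the flat part of $\G$, so the representation localizes to $\G_p$; combining it with Green's identity on $\Omega_R$ for the pairs $(u^s_1,u^s_2)$ and $(u^{in}_1,u^{in}_2)$ and with the boundary condition $u_1=u_2=0$ on $\G_p$ again delivers $u^\infty(\hat x,d)=u^\infty(-d,-\hat x)$.
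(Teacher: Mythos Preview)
Your proposal is correct, but it takes a different and more laborious route than the paper. The paper does not give a detailed argument at all: it simply notes that the classical reciprocity proof for bounded obstacles (Theorem~3.15 in \cite{CK13}), which rests on the far-field integral representation and Green's identity, carries over with minor changes once one uses the integral-equation framework of \cite{ZZ13}. Your closing alternative---using the half-plane Dirichlet Green's function $G(x,y)=\Phi(x,y)-\Phi(x,y')$, whose far-field kernel $e^{-ik\hat x\cdot y}-e^{-ik\hat x\cdot y'}$ vanishes on the flat part of $\G$ so that the representation localizes to $\G_p$---is exactly this route, and is the one the paper has in mind.

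Your primary approach, extracting $u^\infty(-d,-\hat x)$ and $u^\infty(\hat x,d)$ from the mixed boundary terms $T_2,T_3$ by stationary phase, is also valid and your bookkeeping of the stationary points is right: for the incident phase $1+\hat y\cdot d$ the only stationary point on the open arc is $\hat y=-d$, yielding the nonzero contribution; for the reflected phase $1+\hat y\cdot d'$ the stationary point on the arc is $\hat y=d'$, where the amplitude $1-\hat y\cdot d'$ vanishes, so it drops to lower order. One point worth tightening: you invoke ``the corresponding formula for $\pa_r u^s_2$'', i.e.\ $\pa_r u^s-iku^s=O(|x|^{-3/2})$ uniformly in $\hat x$, which is stronger than the bare Sommerfeld condition (\ref{rc}) and is not stated in Lemma~\ref{le3}. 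Without it the error after integrating over $\pa B^+_R$ is only $o(R^{1/2})$. The sharper bound does hold---it follows from the same integral representation in \cite{ZZ13} that underlies Lemma~\ref{le3}---but you should state it explicitly. What your stationary-phase route buys is self-containment (no need to quote the far-field representation formula); what it costs is the extra asymptotic input and the oscillatory-integral analysis, which the Colton--Kress style argument avoids entirely.
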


\begin{proof}
The reciprocity relation of the far-field pattern has been proved in \cite{CK13}
for the case of bounded obstacles (see Theorem 3.15 in \cite{CK13}).
For the case of locally rough surfaces, the result can be proved similarly with minor
modifications in conjunction with the integral equation method in \cite{ZZ13}.
\end{proof}

We are now in a position to study the properties of $U_i$ $(i=1,2,3)$ and $W_i$ $(i=1,2,3,4)$.

\begin{lemma}\label{le4}
For arbitrarily fixed $z\in\R^2$ and for all $x\in{\R^2_+}$ with $|x|$ large enough, we have
\be\label{eq9}
&&|U_1(x,z)|\le {C}{|x|^{-1/2}},\\ \label{eq10}
&&|U_i(x,z)|\le {C(1+|z|)}{|x|^{-1/2}},\quad i=2,3,\\ \label{eq11}
&&|W_j(x,z)|\le {C}{|x|^{-1/2}},\quad j=2,3,
\en
where $C>0$ is a constant independent of $x$ and $z$.
\end{lemma}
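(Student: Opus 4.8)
The plan is to estimate each of the four quantities $U_1, U_2, U_3$ and $W_2, W_3$ by recognizing them as oscillatory integrals over $\Sp^1_-$ of the type handled by Lemma \ref{le2}, after extracting the relevant phase and amplitude. For $U_1(x,z)=\int_{\Sp^1_-}u^s(x,d)e^{-ikz\cdot d}\,ds(d)$, I would first invoke the asymptotic expansion of Lemma \ref{le3} to write $u^s(x,d)=|x|^{-1/2}e^{ik|x|}u^\infty(\hat x,d)+u^s_{Res}(x,d)$. The residual term contributes $O(|x|^{-3/2})$ after integration over the compact set $\Sp^1_-$, using (\ref{eq94}). The leading term gives $|x|^{-1/2}e^{ik|x|}\int_{\Sp^1_-}u^\infty(\hat x,d)e^{-ikz\cdot d}\,ds(d)$; since $z$ is fixed, $e^{-ikz\cdot d}u^\infty(\hat x,d)$ is a fixed $C^1$ function of $d$ (bounded in $C^1(\ov{\Sp^1_-})$ by (\ref{eq94-}) and the smoothness of $e^{-ikz\cdot d}$), so the integral is $O(1)$ uniformly in $x$, yielding $|U_1(x,z)|\le C|x|^{-1/2}$. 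This handles (\ref{eq9}).

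For $U_2$ and $U_3$, the integrands are pure exponentials: $U_2(x,z)=-\int_{\Sp^1_-}e^{ikx\cdot d'}e^{-ikz\cdot d}\,ds(d)$ and $U_3(x,z)=-\int_{\Sp^1_-}e^{ikx\cdot d'}e^{-ikz'\cdot d}\,ds(d)$. Here I would apply Lemma \ref{le2} directly with $g(\hat x,d)=e^{-ikz\cdot d}$ (resp. $e^{-ikz'\cdot d}$), noting that $x\cdot d' = x'\cdot d$ so the oscillatory factor matches the $G$-form in that lemma with $x$ replaced by $x'$ (and $|x'|=|x|$). The $C^1(\ov{\Sp^1_-})$-norm of $e^{-ikz\cdot d}$ as a function of $d$ is bounded by $C(1+|z|)$ — the derivative in $d$ brings down a factor of order $k|z|$ — which is exactly where the $(1+|z|)$ prefactor in (\ref{eq10}) comes from. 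This gives $|U_i(x,z)|\le C(1+|z|)|x|^{-1/2}$ for $i=2,3$.

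For $W_2$ and $W_3$, I would again insert the Lemma \ref{le3} asymptotics for $u^s$ (or its conjugate). Take $W_2(x,z)=\int_{\Sp^1_-}u^i(x,d)u^r(x,d)\ov{u^s(x,d)}e^{-ikz\cdot d}\,ds(d)$: substituting $u^i=e^{ikx\cdot d}$, $u^r=-e^{ikx\cdot d'}$, and $\ov{u^s(x,d)}=|x|^{-1/2}e^{-ik|x|}\ov{u^\infty(\hat x,d)}+\ov{u^s_{Res}(x,d)}$, the residual part is $O(|x|^{-3/2})$ after integration, and the leading part is $-|x|^{-1/2}e^{-ik|x|}\int_{\Sp^1_-}e^{ik(x\cdot d+x\cdot d')}\ov{u^\infty(\hat x,d)}e^{-ikz\cdot d}\,ds(d)$; the remaining integral is an $O(1)$ oscillatory integral (the amplitude $\ov{u^\infty(\hat x,d)}e^{-ikz\cdot d}$ is $C^1$ in $d$, bounded by $C(1+|z|)$, but one can even just bound it crudely by the length of $\Sp^1_-$ times the sup of the amplitude since we only need $O(1)$), so $|W_2(x,z)|\le C|x|^{-1/2}$. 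The argument for $W_3$ is the same with $u^r$ and $\ov{u^r}$ swapped and $u^s$ in place of $\ov{u^s}$. Here one should be slightly careful: the statement (\ref{eq11}) claims a constant $C$ independent of $z$ for $W_{2,3}$, so the $z$-dependence of the amplitude must be absorbed trivially — indeed $|e^{-ikz\cdot d}|=1$, so bounding the leading-term integral by $|\Sp^1_-|\cdot\|u^\infty(\cdot,d)\|_{C^0}$ suffices and the $z$-dependence drops out entirely, giving the claimed $z$-independent constant.

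The main obstacle, such as it is, is bookkeeping: keeping the reflected-wave phases $x\cdot d'=x'\cdot d$ aligned with the hypotheses of Lemma \ref{le2}, and being careful about where the $(1+|z|)$ factor appears (it must appear for $U_2,U_3$ because there the amplitude genuinely is $e^{-ikz\cdot d}$ and the $|x|^{-1/2}$ decay comes from the oscillatory-integral lemma which charges the full $C^1$ norm of the amplitude, whereas for $U_1$, $W_2$, $W_3$ the decay comes from the built-in $|x|^{-1/2}e^{\pm ik|x|}$ in the far-field expansion and the residual bound, so a crude $C^0$ bound on the remaining integral suffices and no $z$-dependence is incurred). No deep new idea is needed beyond correctly combining Lemma \ref{le3} and Lemma \ref{le2}.
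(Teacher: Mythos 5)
Your proposal is correct and follows essentially the same route as the paper: the bounds (\ref{eq9}) and (\ref{eq11}) come from Lemma \ref{le3} (the paper uses the cruder consequence $|u^s(x,d)|\le C|x|^{-1/2}$ directly, so your detour through the full far-field expansion plus residual is harmless but unnecessary), while (\ref{eq10}) comes from applying Lemma \ref{le2} (its $G$-estimate, which already carries the phase $e^{ikx\cdot d'}$, so no substitution $x\mapsto x'$ is needed) with amplitude $f_z(d)=-e^{-ikz\cdot d}$ (resp.\ $-e^{-ikz'\cdot d}$), whose $C^1(\ov{\Sp^1_-})$-norm is bounded by $C(1+|z|)$, exactly as you argue.
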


\begin{proof}
First, the estimates (\ref{eq9}) and (\ref{eq11}) follows easily from Lemma \ref{le3}.

We now prove the estimate of $U_i(x,z)$, $i=2,3$, in (\ref{eq10}). To this end,
define $f_z(d):=-e^{-ikz\cdot d}$. Then
\ben
U_2(x,z)=\int_{\Sp^1_-}e^{ikx\cdot d'}f_z(d)ds(d).
\enn
Apply Lemma \ref{le2} to obtain that
\ben
|U_2(x,z)|\le {C\|f_z(\cdot)\|_{C^1(\ov{\Sp^1_-})}}{|x|^{-1/2}}\le {C(1+|z|)}{|x|^{-1/2}}
\enn
for $x\in{\R^2_+}$ with $|x|$ large enough. The estimate for $U_3(x,z)$ can be obtained
similarly. The proof is thus complete.
\end{proof}

\begin{lemma}\label{lem12}
For arbitrarily fixed $z\in\R^2$ and for all $x\in{\R^2_+}$ with $|x|$ large enough, we have
\be\label{eq101}
&&|W_1(x,z)|\le {C(1+|z|)}{|x|^{-1}},\\ \label{eq12}
&&|W_4(x,z)|\le {C}{|x|^{-1}}.
\en
Here, $C>0$ is a constant independent of $x$ and $z$.
\end{lemma}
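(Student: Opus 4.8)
The plan is to estimate $W_1$ and $W_4$ separately, exploiting the oscillatory structure of the integrands in $d$ together with the pointwise decay of $u^s$ provided by Lemma \ref{le3}.

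For $W_1$, I would start from its definition, substitute $u^i(x,d)=e^{ikx\cdot d}$ and the representation $u^s(x,d) = |x|^{-1/2}e^{ik|x|}u^\infty(\hat x,d) + u^s_{Res}(x,d)$ from Lemma \ref{le3}, so that
$$W_1(x,z) = \int_{\Sp^1_-} e^{2ikx\cdot d}\,e^{-ikz\cdot d}\,\Big(\tfrac{e^{-ik|x|}}{|x|^{1/2}}\ov{u^\infty(\hat x,d)} + \ov{u^s_{Res}(x,d)}\Big)\,ds(d).$$
The residual part is immediately bounded by $C|x|^{-3/2}\cdot|\Sp^1_-| \le C|x|^{-1}$ using (\ref{eq94}), since the exponential factors have modulus one. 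For the leading part, the phase is $e^{ik(2x-z)\cdot d}$, i.e. $e^{ik\cdot 2x\cdot d}$ up to a $z$-dependent but $d$-dependent amplitude; writing $2x = |2x|\,\widehat{2x}$ with $\widehat{2x}=\hat x$, this is exactly of the form $F(y)$ in Lemma \ref{le2} with $y=2x$, amplitude $f(\hat y, d) = e^{-ikz\cdot d}\ov{u^\infty(\hat y, d)}$. By Lemma \ref{le2} and the product rule, $\|f(\hat y,\cdot)\|_{C^1(\ov{\Sp^1_-})} \le C(1+|z|)\|u^\infty(\hat y,\cdot)\|_{C^1(\ov{\Sp^1_-})} \le C(1+|z|)$ by (\ref{eq94-}), giving a bound $C(1+|z|)|2x|^{-1/2} = C(1+|z|)|x|^{-1/2}$ from that factor; combined with the explicit prefactor $|x|^{-1/2}$ in front of $u^\infty$ this yields $|W_1(x,z)| \le C(1+|z|)|x|^{-1}$, which together with the residual estimate gives (\ref{eq101}).

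For $W_4(x,z) = \int_{\Sp^1_-} u^i(x,d)\,|u^s(x,d)|^2\,e^{-ikz\cdot d}\,ds(d)$, the phase is again $e^{ik(x-z)\cdot d}$, so the natural route is once more Lemma \ref{le2} applied to $y=x$ with amplitude $f(\hat x,d) = e^{-ikz\cdot d}|u^s(x,d)|^2$. The subtlety is that $|u^s(x,d)|^2$ carries its own $x$-dependence and one must track how its $C^1$-norm in $d$ behaves: from Lemma \ref{le3}, $|u^s(x,d)| \le C|x|^{-1/2}$, and differentiating the representation in $d$ (which requires $\partial_d u^\infty(\hat x,\cdot)$ bounded, again from (\ref{eq94-})) gives $\|\,|u^s(x,\cdot)|^2\,\|_{C^1(\ov{\Sp^1_-})} \le C|x|^{-1}$. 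Hence $\|f(\hat x,\cdot)\|_{C^1} \le C(1+|z|)|x|^{-1}$, and Lemma \ref{le2} yields $|W_4(x,z)| \le C(1+|z|)|x|^{-1}\cdot|x|^{-1/2} = C(1+|z|)|x|^{-3/2}$, which is stronger than (\ref{eq12}). To get the clean $z$-independent bound $|W_4(x,z)|\le C|x|^{-1}$ stated in the lemma, I would instead argue more crudely: bound $|u^i(x,d)|=1$, $|e^{-ikz\cdot d}|=1$, and $|u^s(x,d)|^2 \le C|x|^{-1}$ pointwise, then integrate trivially over $\Sp^1_-$ to get $|W_4(x,z)| \le C|x|^{-1}\cdot|\Sp^1_-| = C|x|^{-1}$ with $C$ independent of both $x$ and $z$. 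This is the cleanest path and is what I would use.

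The main obstacle I anticipate is the $W_1$ estimate: one must be careful that the oscillatory phase genuinely has stationary-point-free behavior of the type Lemma \ref{le2} handles, i.e. that substituting $y=2x$ keeps $\hat y = \hat x \in \Sp^1_+$ so the lemma applies verbatim, and that the $z$-dependence is absorbed only through the $C^1$-norm of the amplitude (producing the factor $1+|z|$) and not through the phase. For $W_4$ the naive pointwise bound suffices and there is really no obstacle; the only temptation to resist is overcomplicating it with Lemma \ref{le2}. I would also note for completeness that the constant $C$ in both estimates depends on $R$, $k$, and the surface $\G$ through Lemma \ref{le3}, but not on $x$ or $z$, which is exactly what the statement claims.
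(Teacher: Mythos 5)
Your overall route is the paper's: split $W_1$ via the asymptotics of Lemma \ref{le3} into a leading oscillatory term plus a residual of size $|x|^{-3/2}$, treat the leading term with Lemma \ref{le2}, and bound $W_4$ by the trivial pointwise estimate $|u^s(x,d)|^2\le C|x|^{-1}$ integrated over $\Sp^1_-$. The $W_4$ part and the residual part of $W_1$ are fine exactly as you wrote them.

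There is, however, one genuine gap in your $W_1$ argument: you bound the amplitude by
$\|f(\hat y,\cdot)\|_{C^1(\ov{\Sp^1_-})}\le C(1+|z|)\,\|u^\infty(\hat y,\cdot)\|_{C^1(\ov{\Sp^1_-})}$
and then claim the last factor is bounded ``by (\ref{eq94-})''. But (\ref{eq94-}) controls $\|u^\infty(\cdot,d)\|_{C^1(\Sp^1_+)}$, i.e.\ the $C^1$-norm of the far field in the \emph{observation} variable $\hat x$, uniformly in $d$; what Lemma \ref{le2} requires here is the $C^1$-norm in the \emph{incident} variable $d$ for fixed $\hat x$, and that is not what Lemma \ref{le3} provides. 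This is precisely why the paper inserts the reciprocity relation of Lemma \ref{lem13}: since $u^\infty(\hat x,d)=u^\infty(-d,-\hat x)$, differentiation in $d$ becomes differentiation in the observation direction of the reciprocal configuration, and then (\ref{eq94-}) applies and gives $\|u^\infty(\hat x,\cdot)\|_{C^1(\ov{\Sp^1_-})}\le C$ uniformly in $\hat x$. With that one extra ingredient your estimate $|F(y)|\le C(1+|z|)|x|^{-1/2}$, combined with the explicit prefactor $|x|^{-1/2}$ and the residual bound, yields (\ref{eq101}) exactly as in the paper; without it, the step as cited does not follow from the results you invoke. (The same caveat would have applied to your discarded Lemma \ref{le2} route for $W_4$, which would need $d$-regularity of $|u^s(x,\cdot)|^2$; your crude bound avoids this and matches the paper, and note the lemma's claim (\ref{eq12}) is indeed $z$-independent, so the crude route is the right one.)
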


\begin{proof}
We first consider $W_1(x,z)$. From Lemma \ref{le3} it follows that for $x\in{\R^2_+}$ with $|x|$
large enough,
\be\label{eq99}
W_1(x,z)=\frac{e^{-ik|x|}}{|x|^{1/2}}\int_{\Sp^1_-}e^{2ikx\cdot d}\ov{u^\infty(\hat{x},d)}
e^{-ikz\cdot d}ds(d)+W_{1,Res}(x,z),
\en
where
\ben
W_{1,Res}(x,z):=\int_{\Sp^1_-}e^{2ikx\cdot d}\ov{u_{Res}(x,d)}e^{-ikz\cdot d}ds(d)
\enn
with
\be\label{eq100}
|W_{1,Res}(x,z)|\le {C}{|x|^{-3/2}}.
\en
Now define
\ben
F(x):=\int_{\Sp^1_-}e^{2ikx\cdot d} f_z(\hat{x},d)ds(d)
\enn
with
\ben
f_z(\hat{x},d):=\ol{u^\infty(\hat{x},d)}e^{-ikz\cdot d},\quad\hat{x}\in\Sp^1_+,\;d\in\Sp^1_-.
\enn
Then, by Lemmas \ref{le3}, \ref{le2} and \ref{lem13} we deduce that
for $x\in{\R^2_+}$ with $|x|$ large enough,
\be\label{eq102}
|F(x)|&\le& C{\|f_z(\hat{x},\cdot)\|_{C^1(\ov{\Sp^1_-})}}{|x|^{-1/2}}\no\\
&\le& C{(1+|z|)\|u^\infty(\hat{x},\cdot)\|_{C^1(\ov{\Sp^1_-})}}{|x|^{-1/2}}
\le C{(1+|z|)}{|x|^{-\half}}.
\en
Thus, (\ref{eq101}) follows immediately from (\ref{eq99}), (\ref{eq100}) and (\ref{eq102}).

We now consider $W_4(x,z)$. By Lemma \ref{le3} we know that $|u^s(x,d)|^2\le C/{|x|}$
for $x\in{\R^2_+}$ with $|x|$ large enough and $d\in\Sp^1_-$. Thus,
(\ref{eq12}) follows from the definition of $W_4(x,z)$.
\end{proof}

\begin{lemma}\label{lem11}
For arbitrarily fixed $z\in\R^2$ and for $R>0$ large enough we have
\ben
&&\left|\int_{\pa B^+_R} U(x,z)\ov{W_j(x,z)}dx\right|\le C{\big(1+|z|\big)^2}{R^{-1/2}},\\
&&\int_{\pa B^+_R}|W_j(x,z)|^2dx\le C{(1+|z|)^2}{R^{-1}}
\enn
for $j=1,4$. Here, $C>0$ is a constant independent of $R$ and $z$.
\end{lemma}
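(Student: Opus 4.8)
The plan is to derive both inequalities directly from the pointwise asymptotic bounds already proved in Lemmas \ref{le4} and \ref{lem12}, combined with the elementary observation that for $R$ large enough the curve $\pa B^+_R$ is the upper semicircle of radius $R$ and hence has length $\pi R$. In other words, this lemma requires no new analysis: it is the passage from the pointwise decay estimates on rays to $L^2$-in-$x$ estimates on the measurement curve.

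First I would fix $R_0>0$ large enough that the estimates (\ref{eq9})--(\ref{eq11}) of Lemma \ref{le4} and (\ref{eq101})--(\ref{eq12}) of Lemma \ref{lem12} all hold on $\{|x|\ge R_0\}$, and assume throughout that $R\ge R_0$. On $\pa B^+_R$ we have $|x|=R$, so summing (\ref{eq9}) and (\ref{eq10}) gives $|U(x,z)|\le C(1+|z|)R^{-1/2}$, while (\ref{eq101}) and (\ref{eq12}) give $|W_j(x,z)|\le C(1+|z|)R^{-1}$ for both $j=1$ and $j=4$ (for $j=4$ the factor $1+|z|$ is simply inserted for uniformity).

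For the second estimate I would integrate the square of the bound for $W_j$ over $\pa B^+_R$:
\[
\int_{\pa B^+_R}|W_j(x,z)|^2\,dx\le C(1+|z|)^2R^{-2}\cdot|\pa B^+_R|=C(1+|z|)^2R^{-2}\cdot\pi R=C(1+|z|)^2R^{-1}.
\]
For the first estimate I would bound the integrand pointwise by $|U(x,z)|\,|W_j(x,z)|\le C(1+|z|)^2R^{-3/2}$ on $\pa B^+_R$ and multiply by the length $\pi R$, obtaining $C(1+|z|)^2R^{-1/2}$; alternatively, one may invoke the Cauchy--Schwarz inequality together with $\|W_j(\cdot,z)\|_{L^2(\pa B^+_R)}\le C(1+|z|)R^{-1/2}$ (the estimate just proved) and $\|U(\cdot,z)\|_{L^2(\pa B^+_R)}\le C(1+|z|)$, the latter following in the same way from $|U(x,z)|\le C(1+|z|)R^{-1/2}$ and $|\pa B^+_R|=\pi R$.

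Since every ingredient is already available, there is no genuine obstacle here; the proof is essentially bookkeeping. The only points deserving a little care are: (i) tracking the $(1+|z|)$ factors coming from $U_2,U_3$ in (\ref{eq10}) and from $W_1$ in (\ref{eq101}) so that the final power is exactly $(1+|z|)^2$; and (ii) taking $R$ large enough from the start so that the asymptotic estimates of Lemmas \ref{le4} and \ref{lem12} hold on the whole of $\pa B^+_R$, which is precisely the role of the hypothesis ``$R>0$ large enough'' in the statement.
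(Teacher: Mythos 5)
Your proposal is correct and follows essentially the same route as the paper: bound $|U(x,z)|\le C(1+|z|)R^{-1/2}$ and $|W_j(x,z)|\le C(1+|z|)R^{-1}$ on $\pa B^+_R$ via Lemmas \ref{le4} and \ref{lem12}, then integrate over the semicircle of length $\pi R$. The bookkeeping of the $(1+|z|)$ factors and the choice of $R$ large enough are handled exactly as in the paper's proof.
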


\begin{proof}
From Lemmas \ref{le4} and \ref{lem12} it follows that for $j=1,4$ and $R>0$ large enough,
\ben
&&\left|\int_{\pa B^+_R} U(x,z)\ov{W_j(x,z)}dx\right|
\le C\int_{\pa B^+_R}\frac{1+|z|}{R^{1/2}}\cdot\frac{1+|z|}{R}dx\le C\frac{(1+|z|)^2}{R^{1/2}},\\
&&\left|\int_{\pa B^+_R}|W_j(x,z)|^2dx\right|\le C\int_{\pa B^+_R}\left(\frac{1+|z|}{R}\right)^2dx
\le C\frac{(1+|z|)^2}{R}.
\enn
The proof is thus complete.
\end{proof}

\begin{lemma}\label{lem10}
For arbitrarily fixed $z\in\R^2$ and for $R>0$ large enough we have
\ben
\sum^3_{i=1}\left|\int_{\pa B^+_R} U_i(x,z)\ov{W_j(x,z)}dx\right|
+\sum^4_{i=1}\left|\int_{\pa B^+_R} W_i(x,z)\ov{W_j(x,z)}dx\right|\leq C\frac{(1+|z|)^2}{R^{1/3}}
\enn
for $j=2,3$, where $C>0$ is a constant independent of $R$ and $z$.
\end{lemma}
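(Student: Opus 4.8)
The plan is to estimate each of the integrals over $\pa B^+_R$ appearing in the statement by combining two facts: on the one hand, the decay estimates for $W_1,W_4$ (order $|x|^{-1}$) and for $U_1,W_2,W_3$ (order $|x|^{-1/2}$) already established in Lemmas \ref{le4} and \ref{lem12}; and on the other hand, a sharper cancellation estimate for the purely oscillatory pieces $U_2,U_3$ (and for $W_j$, $j=2,3$, after pulling out the far-field asymptotics via Lemma \ref{le3}), obtained by applying Lemma \ref{le2} not just once but with the stationary-phase-type splitting into a small arc of angular width $\delta$ around the critical direction plus a complementary arc where the phase derivative is bounded below. The crucial observation is that on $\pa B^+_R$ the surface measure contributes a factor $R$ (the curve has length $\sim R$), so a bound of the form $|\cdot|\le C(1+|z|)^2 R^{-1/2}\cdot R^{-1/2}\cdot$(something) would only give $R^{0}$; we must therefore extract, for at least one factor in each product, a decay rate strictly better than $R^{-1/2}$, and the exponent $1/3$ in the statement signals exactly the optimization $\delta=|x|^{-2/3}$ (or similar) rather than the $\delta=|x|^{-1/2}$ used in Lemma \ref{le2}. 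Concretely, for the products $\int_{\pa B^+_R} U_2(x,z)\ov{W_j(x,z)}dx$ with $j=2,3$, I would write $W_j=|x|^{-1/2}e^{-ik|x|}(\text{oscillatory integral with }C^1\text{ amplitude})+O(|x|^{-3/2})$ via Lemma \ref{le3}, then treat the product integrand as an oscillatory integral in $x$ along $\pa B_R$ whose phase has isolated nondegenerate stationary points, and use the method-of-stationary-phase splitting: near each stationary point an arc of length $\delta R$ contributes $O(\delta R\cdot R^{-1})=O(\delta)$, while away from the stationary points Lemma \ref{le1}/\ref{le2} gives $O((\delta R)^{-1}\cdot R^{-1})$ after the change of variables; optimizing in $\delta$ produces the $R^{-1/3}$ rate (the $(1+|z|)^2$ comes from the two $C^1$-norms in $d$, each bounded by $C(1+|z|)$).

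The key steps, in order: (i) record that $\int_{\pa B^+_R}|W_j|^2dx\le C(1+|z|)^2 R^{-1/2}$ for $j=2,3$ — this is immediate from $|W_j(x,z)|\le C|x|^{-1/2}$ ( Lemma \ref{le4}) once one notices that the bound can in fact be improved to $C(1+|z|)R^{-1/2}$ incorporating the $z$-dependence, and that integrating $R^{-1}$ over a curve of length $\sim R$ only gives $R^0$, so one genuinely needs the refined angular splitting to gain the extra $R^{-1/6}$; (ii) for the mixed terms $\int_{\pa B^+_R}U_1\ov{W_j}dx$ and $\int_{\pa B^+_R}W_i\ov{W_j}dx$ with $i\in\{1,4\}$, combine the $|x|^{-1/2}$ bound on $U_1$ (and $W_1,W_4$ are even $|x|^{-1}$) with the refined $|x|^{-1/2-1/6}$-type bound on $W_j$; (iii) for $\int_{\pa B^+_R}U_i\ov{W_j}dx$ with $i\in\{2,3\}$ and $\int_{\pa B^+_R}W_i\ov{W_j}dx$ with $i,j\in\{2,3\}$, apply the stationary-phase splitting to the $x$-integral directly, writing both factors via their leading $|x|^{-1/2}e^{\pm ik|x|}$ behavior and controlling the resulting phase $e^{ik(\pm1\pm1)|x|}$-type exponentials; the case where the phases cancel (giving a non-oscillatory $x$-integral) must be handled by instead using the decay $R^{-1/2}$ of each factor together with the fact that the $d$-integral defining, say, $U_2(x,z)\ov{U_3(x,z)}$ still carries oscillation in the angular variable $\theta_{\hat x}$ — here one integrates by parts in $\theta_{\hat x}$ along $\pa B_R$, using that $U_2,U_3$ are $C^1$ in $\hat x$ with norms $O((1+|z|)\cdot\text{something})$; and (iv) sum the finitely many terms and absorb constants.

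The main obstacle I anticipate is step (iii) in the resonant sub-case, namely the terms $\int_{\pa B^+_R}U_i(x,z)\ov{W_j(x,z)}dx$ and $\int_{\pa B^+_R}W_i(x,z)\ov{W_j(x,z)}dx$ with $i,j\in\{2,3\}$, where the $|x|$-phases from the two factors can cancel and leave an $x$-integral of a quantity that is only $O(R^{-1})$ pointwise but over a curve of length $\sim R$; there is no $x$-oscillation to exploit, so one is forced to go back into the $d$-integrals and exhibit cancellation there — i.e. to show that the double angular integral $\int_{\Sp^1_-}\int_{\Sp^1_-}e^{ik(x\cdot d'-x\cdot e')}(\cdots)\,ds(d)\,ds(e)$ decays in $|x|$ faster than the trivial $O(1)$, uniformly as $z$ varies, which again requires the stationary-phase machinery but now in two angular variables with the stationary manifold $d=e$. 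Getting the clean exponent $R^{-1/3}$ out of this — rather than a worse power — will be the delicate bookkeeping, and is presumably where the lemma's proof will invoke the auxiliary Lemma \ref{lem5} (proved in Appendix A by the method of stationary phase) that the introduction flagged as "playing an important role." Everywhere the $(1+|z|)^2$ factor is tracked by noting each application of Lemma \ref{le2} costs a factor $\|e^{-ikz\cdot d}\|_{C^1(\ov{\Sp^1_-})}\le C(1+|z|)$, and at most two such factors enter any single product.
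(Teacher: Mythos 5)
You have the right first move (expanding $W_j$, $j=2,3$, via Lemma \ref{le3}) and you correctly locate the crux: for products in which the $e^{\pm ik|x|}$ factors cancel (e.g. $\int_{\pa B^+_R}|W_j|^2dx$), the pointwise bounds $C(1+|z|)|x|^{-1/2}$ give only $O(1)$ after integrating over a curve of length $\sim R$, and there is no $x$-oscillation to exploit. But your plan for that case has genuine gaps. First, the uniform pointwise improvement $|W_j(x,z)|\lesssim |x|^{-1/2-1/6}$ that step (ii) relies on does not exist: when $\hat x$ is near the degenerate direction (vertical for $W_2$, whose leading phase is $2k|x|\hat x_1 d_1$; horizontal for $W_3$, with phase $2k|x|\hat x_2 d_2$), the inner $d$-integral is genuinely $O(1)$, so any gain beyond $|x|^{-1/2}$ can only appear after integration in $\theta_{\hat x}$, not pointwise. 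Second, for the resonant terms you defer to a two-variable stationary-phase analysis over the manifold $d=e$ and to Lemma \ref{lem5}; the latter is irrelevant here (it concerns $F(R,z)=\int_{\pa B^+_R}|U(x,z)|^2dx$ and involves no $W_j$), and the former, while perhaps feasible, is left unexecuted --- you yourself flag it as the unresolved obstacle, so the proposal does not actually establish the estimate.

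The paper's device is much simpler and makes your ``resonant case'' disappear: in every product $U_i\ov{W_j}$ or $W_i\ov{W_j}$ with $j=2,3$ it expands only $W_j$ and bounds the other factor in modulus by $C(1+|z|)|x|^{-1/2}$ (Lemmas \ref{le4} and \ref{lem12}). Since $ds=R\,d\theta_{\hat x}$ on $\pa B^+_R$, the factors $R\cdot R^{-1/2}\cdot R^{-1/2}$ cancel and one is left with $C(1+|z|)\int_0^{\pi}\bigl|\int_{\pi}^{2\pi} e^{2ikR\cos\theta_{\hat x}\cos\theta_d}\,\ov{u^\infty(\hat x,d)}e^{-ikz\cdot d}\,d\theta_d\bigr|\,d\theta_{\hat x}$ (and the analogous integral with phase $2kR\sin\theta_{\hat x}\sin\theta_d$ for $W_3$), plus an $O((1+|z|)/R)$ remainder coming from $W_{j,Res}$. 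This iterated angular integral is then estimated by exactly the kind of splitting you describe in spirit, but in the angular variables rather than along $\pa B_R$: remove $\vep$-neighbourhoods of the degenerate $\theta_{\hat x}$ and of the bad $\theta_d$-endpoints (contributing $C\vep$), apply the first-derivative test of Lemma \ref{le1} on the remainder with $\|\ov{u^\infty(\hat x,\cdot)}e^{-ikz\cdot(\cdot)}\|_{C^1}\le C(1+|z|)$ (the reciprocity relation of Lemma \ref{lem13} combined with Lemma \ref{le3} supplies the $C^1$-bound in $d$), giving $C(1+|z|)/(R\vep^2)$, and optimize $\vep=R^{-1/3}$. Because only one factor is expanded and the modulus of the inner $d$-integral is taken before the $\theta_{\hat x}$-integration, no cross-phases, no stationary manifold $d=e$, and no appeal to Lemma \ref{lem5} are ever needed.
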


\begin{proof}
From Lemma \ref{le3} it is easy to derive that for $x\in\R^2_+$ with $|x|$ large enough,
\be\label{eq82}
&&W_2(x,z)=-\frac{e^{-ik|x|}}{|x|^{1/2}}\int_{\Sp^1_-}e^{2i k|x|\hat{x}_1\cdot d_1}
\ov{u^\infty(\hat{x},d)}e^{-ikz\cdot d}ds(d)+W_{2,Res}(x,z),\\ \label{eq83}
&&W_3(x,z)=-\frac{e^{ik|x|}}{|x|^{1/2}}\int_{\Sp^1_-}e^{2i k|x|\hat{x}_2\cdot d_2}
u^\infty(\hat{x},d)e^{-ik z\cdot d}ds(d)+W_{3,Res}(x,z),
\en
where
\ben
&&W_{2,Res}(x,z):=\int_{\Sp^1_-}e^{2i k|x|\hat{x}_1\cdot d_1}\ol{u^s_{Res}(x,d)}e^{-ikz\cdot d}ds(d),\\
&&W_{3,Res}(x,z):=\int_{\Sp^1_-}e^{2i k|x|\hat{x}_2\cdot d_2} u^s_{Res}(x,d)e^{-ik z\cdot d}ds(d)
\enn
with
\be\label{eq103}
\left|W_{j,Res}(x,z)\right|\le {C}{|x|^{-3/2}},\quad j=2,3.
\en
By Lemmas \ref{le4} and \ref{lem12} we obtain that
\be\label{eq84}
\sum\limits^3_{i=1}\left|U_i(x,z)\right|+\sum\limits^4_{i=1}
\left|W_i(x,z)\right|\leq\frac{C(1+|z|)}{|x|^{1/2}},\quad |x|\rightarrow +\infty.
\en
Now, let $x=|x|\hat{x}=|x|(\cos\theta_{\hat{x}},\sin\theta_{\hat{x}})$,
$d=(\cos\theta_d,\sin\theta_d)$ with $\theta_{\hat{x}}\in[0,\pi]$, $\theta_d\in[\pi,2\pi]$, and
define $\wid{f}_z(\theta_{\hat{x}},\theta_d):=\ov{u^\infty(\hat{x},d)}e^{-ik z\cdot d}$,
$\wid{g}_z(\theta_{\hat{x}},\theta_d):=u^\infty(\hat{x},d)e^{-ik z\cdot d}$
for $\theta_{\hat{x}}\in [0,\pi]$ and $\theta_d\in[\pi,2\pi]$.
Then it follows from (\ref{eq82}), (\ref{eq83}), (\ref{eq103}) and (\ref{eq84}) that
\be\label{eq85}
&&\sum\limits^3_{i=1}\left|\int_{\pa B^+_R} U_i(x,z)\ol{W_2(x,z)} dx\right|
+\sum^4_{i=1}\left|\int_{\pa B^+_R} W_i(x,z)\ol{W_2(x,z)}dx\right|\no\\
&&\qquad\;\;\le C(1+|z|)\left(\int_{\Sp^1_+}\left|\int_{\Sp^1_-} e^{2i kR\hat{x}_1\cdot d_1}
\ov{u^\infty(\hat{x},d)}e^{-ikz\cdot d}ds(d)\right|ds(\hat{x})+\frac{C}{R}\right)\no\\
&&\qquad\;\;=C(1+|z|)\left(\int^\pi_0\left|\int^{2\pi}_{\pi}e^{2i k R\cos\theta_{\hat{x}}\cos\theta_d}
\wid{f}_z(\theta_{\hat{x}},\theta_d)d{\theta_d}\right|d{\theta_{\hat{x}}} +\frac{C}{R}\right),\\ \label{eq86}
&&\sum\limits^3_{i=1}\left|\int_{\pa B^+_R} U_i(x,z)\ol{W_3(x,z)}dx\right|
+\sum^4_{i=1}\left|\int_{\pa B^+_R}W_i(x,z)\ov{W_3(x,z)}dx\right|\no\\
&&\qquad\;\;\le C(1+|z|)\left(\int_{\Sp^1_+}\left|\int_{\Sp^1_-} e^{2ikR\hat{x}_2\cdot d_2}
u^\infty(\hat{x},d) e^{-ik z\cdot d} ds(d)\right|ds(\hat{x})+\frac{C}{R}\right)\no\\
&&\qquad\;\;=C(1+|z|)\left(\int^\pi_0\left|\int^{2\pi}_{\pi} e^{2ikR\sin\theta_{\hat{x}}\sin\theta_d}
\wid{g}_z(\theta_{\hat{x}},\theta_d)d\theta_d\right|d\theta_{\hat{x}}+\frac{C}{R}\right).
\en
Let $\vep>0$ be small enough such that $\sin\vep\geq\vep/2$ and let $R$ be large enough.
Define
$$
\wid{w}_z(R,\theta_{\hat{x}}):=\int^{2\pi}_{\pi}e^{2ikR\cos\theta_{\hat{x}}
\cos\theta_d}\wid{f}_z(\theta_{\hat{x}},\theta_d)d{\theta_d}
$$
for $\theta_{\hat{x}}\in[0,\pi]$. Then, by Lemma \ref{le3} we have
\be\label{eq104}
&&\int_0^{\pi}|\wid{w}_z(R,\theta_{\hat{x}})|d{\theta_{\hat{x}}}\no\\
&&\;\;=\int_{[0,\frac{\pi}{2}-\vep]\cup[{\pi}/{2}+\vep,\pi]}|\wid{w}_z(R,\theta_{\hat{x}})|d{\theta_{\hat{x}}}
+\int_{[{\pi}/{2}-\vep,{\pi}/{2}+\vep]}|\wid{w}_z(R,\theta_{\hat{x}})|d{\theta_{\hat{x}}}\no\\
&&\;\;\le C\vep + \int_{[0,{\pi}/{2}-\vep]\cup[{\pi}/{2}+\vep,\pi]}
|\wid{w}_z(R,\theta_{\hat{x}})|d{\theta_{\hat{x}}}\no\\
&&\;\;\le C\vep+\int_{[0,{\pi}/{2}-\vep]\cup[{\pi}/{2}+\vep,\pi]}
\left|\int_{[\pi+\vep,2\pi-\vep]}e^{2ikR\cos\theta_{\hat{x}}\cos\theta_{d}}
\wid{f}_z(\theta_{\hat{x}},\theta_d)d{\theta_d}\right|d{\theta_{\hat{x}}}\no\\
&&\;\;\;+\int_{[0,{\pi}/{2}-\vep]\cup[{\pi}/{2}+\vep,\pi]}
\left|\int_{[\pi,\pi+\vep]\cup[2\pi-\vep,2\pi]}e^{2ikR\cos\theta_{\hat{x}}\cos\theta_{d}}
\wid{f}_z(\theta_{\hat{x}},\theta_d)d{\theta_d}\right|d{\theta_{\hat{x}}}\no\\
&&\;\;\le C\vep+\int_{[0,{\pi}/{2}-\vep]\cup[{\pi}/{2}+\vep,\pi]}
\left|\int_{[\pi+\vep,2\pi-\vep]}e^{2ikR\cos\theta_{\hat{x}}\cos\theta_{d}}
\wid{f}_z(\theta_{\hat{x}},\theta_d)d{\theta_d}\right|d{\theta_{\hat{x}}}.\;\;
\en
Let $u_{\theta_{\hat{x}}}(\theta_d)={4}{\vep^{-2}}\cos\theta_{\hat{x}}\cos\theta_d$.
Then it is easy to see that $u'_{\theta_{\hat{x}}}(\theta_d)=-{4}{\vep^{-2}}
\cos\theta_{\hat{x}}\sin\theta_d$, and so we obtain that for
$\theta_{\hat{x}}\in[0,{\pi}/{2}-\vep]\cup[{\pi}/{2}+\vep,\pi]$ and $\theta_d\in[\pi+\vep,2\pi-\vep]$,
\ben
|u'_{\theta_{\hat{x}}}(\theta_d)|=\frac{4}{\vep^2}|\cos\theta_{\hat{x}}|
|\sin\theta_d|=\frac{4}{\vep^2}\left|\sin(\theta_{\hat{x}}-\pi/2)\right|
|\sin\theta_d|\ge\frac{4}{\vep^2}\sin^2\vep\geq \frac{4}{\vep^2}\left(\frac{\vep}{2}\right)^2=1
\enn
and $u^{\prime\prime}_{\theta_{\hat{x}}}(\theta_d)=-4\vep^{-2}\cos{\theta_{\hat{x}}}\cos\theta_d$
is monotone for $\theta_d\in[\pi+\vep,2\pi-\vep]$.
Thus we can apply Lemmas \ref{le3}, \ref{le1} and \ref{lem13} to obtain that for
$\theta_{\hat{x}}\in[0,{\pi}/{2}-\vep]\cup[\frac{\pi}{2}+\vep,\pi]$
\be\label{eq105}
&&\left|\int_{[\pi+\vep,2\pi-\vep]}e^{2ikR\cos\theta_{\hat{x}}\cos\theta_d}
\wid{f}_z(\theta_{\hat{x}},\theta_d)d{\theta_d}\right|\no\\
&&\quad=\left|\int_{[\pi+\vep,2\pi-\vep]}e^{(ikR\vep^2/2)u_{\theta_{\hat{x}}}(\theta_d)}
\wid{f}_z(\theta_{\hat{x}},\theta_d)d{\theta_d}\right|\no\\
&&\quad\le\frac{C}{R\vep^2}\|\wid{f}_z(\theta_{\hat{x}},\cdot)\|_{C^1[\pi+\vep,2\pi-\vep]}\no\\
&&\quad\le\frac{C(1+|z|)\|u^\infty(\hat{x},\cdot)\|_{C^1(\ol{\Sp^1_-})}}{R\vep^2}
\le \frac{C(1+|z|)}{R\vep^2}.
\en
Combining (\ref{eq104}) and (\ref{eq105}) and then taking $\vep=R^{-1/3}$ give
\be\label{eq87}
\int^\pi_0\left|\wid{w}_z(R,\theta_{\hat{x}})\right|d{\theta_{\hat{x}}}
\leq C\vep+\frac{C(1+|z|)}{R\vep^2}\leq C\frac{1+|z|}{R^{1/3}}.
\en

Now, define
$$
\wid{v}_z(R,\theta_{\hat{x}}):=\int^{2\pi}_{\pi}e^{2ikR\sin\theta_{\hat{x}}\sin\theta_d}
\wid{g}_z(\theta_{\hat{x}},\theta_d)d{\theta_d}.
$$
Then it follows from Lemma \ref{le3} that
{\small
\be\label{eq106}
\int^{\pi}_0|\wid{v}_z(R,\theta_{\hat{x}})|d\theta_{\hat{x}}
&=&\int_{[\vep,\pi-\vep]}|\wid{v}_z(R,\theta_{\hat{x}})|d\theta_{\hat{x}}
+\int_{[0,\vep]\cup[\pi-\vep,\pi]}|\wid{v}_z(R,\theta_{\hat{x}})|d\theta_{\hat{x}}\no\\
&\le& C\vep + \int_{[\vep,\pi-\vep]}|\wid{v}_z(R,\theta_{\hat{x}})|d\theta_{\hat{x}}\no\\
&\le& C\vep+\int_{[\vep,\pi-\vep]}\left|\int_{[\pi,3\pi/2-\vep]\cup
    [3\pi/2+\vep,2\pi]}e^{2ikR\sin\theta_{\hat{x}}\sin\theta_d}
    \wid{g}_z(\theta_{\hat{x}},\theta_d)d\theta_d\right|d\theta_{\hat{x}}\no\\
&&\;\;+\int_{[\vep,\pi-\vep]}\left|\int_{[3\pi/2-\vep,3\pi/2+\vep]}
   e^{2ikR\sin\theta_{\hat{x}}\sin\theta_d}
   \wid{g}_z(\theta_{\hat{x}},\theta_d)d\theta_d\right|d\theta_{\hat{x}}\no\\
&\le& C\vep+\int_{[\vep,\pi-\vep]}\left|\int_{[\pi,3\pi/2-\vep]\cup[3\pi/2+\vep,2\pi]}
  e^{2ikR\sin\theta_{\hat{x}}\sin\theta_{\theta_d}}
  \wid{g}_z(\theta_{\hat{x}},\theta_d)d\theta_d\right|d\theta_{\hat{x}}.\no\\
\en
}
Let $v_{\theta_{\hat{x}}}(\theta_d)=4\vep^{-2}\sin\theta_{\hat{x}}\sin\theta_d$.
It is easy to see that $v'_{\theta_{\hat{x}}}(\theta_d)=4\vep^{-2}\sin\theta_{\hat{x}}\cos\theta_d$,
and thus we have that for $\theta_{\hat{x}}\in[\vep,\pi-\vep]$ and
$\theta_d\in[\pi,3\pi/2-\vep]\cup[3\pi/2+\vep,2\pi]$,
{\small
\ben
|v'_{\theta_{\hat{x}}}(\theta_d)|=\frac{4}{\vep^2}|\sin\theta_{{\hat{x}}}||\cos\theta_{d}|
=\frac{4}{\vep^2}|\sin\theta_{\hat{x}}||\sin(\theta_d-3\pi/2)|
\ge\frac{4}{\vep^2}\sin\vep\sin\vep\ge\frac{4}{\vep^2}\left(\frac{\vep}{2}\right)^2=1
\enn
}
and $v^{\prime\prime}_{\theta_{\hat{x}}}(\theta_d)=-4\vep^{-2}\sin\theta_{\hat{x}}\sin{\theta_d}$
is monotone for $\theta_d\in[\pi,3\pi/2-\vep]$ and for $\theta_d\in[3\pi/2+\vep,2\pi]$.
Then, by Lemmas \ref{le3}, \ref{le1} and \ref{lem13} we find that for $\theta_{\hat{x}}\in[\vep,\pi-\vep]$,
\be\label{eq107}
&&\left|\int_{[\pi,3\pi/2-\vep]\cup[3\pi/2+\vep,2\pi]}e^{2ikR\sin\theta_{\hat{x}}\sin\theta_d}
\wid{g}_z(\theta_{\hat{x}},\theta_d)d\theta_d\right|\no\\
&&\;\;=\left|\int_{[\pi,3\pi/2-\vep]\cup[3\pi/2+\vep,2\pi]}e^{(ikR\vep^2/2)v_{\theta_{\hat{x}}}(\theta_d)}
\wid{g}_z(\theta_{\hat{x}},\theta_d)d\theta_d\right|\no\\
&&\;\;\le\frac{C}{R\vep^2}\left\|\wid{g}_z(\theta_{\hat{x}},\cdot)
  \right\|_{C^1([\pi,3\pi/2-\vep]\cup[3\pi/2+\vep,2\pi])}\no\\
&&\;\;\le\frac{C(1+|z|)}{R\vep^2}\|u^\infty(\hat{x},\cdot)\|_{C^1(\ov{S^1_-})}
\le\frac{C(1+|z|)}{R\vep^2}.
\en
Combining (\ref{eq106}) and (\ref{eq107}) and taking $\vep=R^{-1/3}$ yield
\be\label{eq88}
\int^\pi_0|\wid{v}_z(R,\theta_{\hat{x}})|d\theta_{\hat{x}}\leq C\vep+ \frac{C (1+|z|)}{R\vep^2}
\leq C\frac{1+|z|}{R^{1/3}}.
\en
Finally, combining (\ref{eq85}), (\ref{eq86}), (\ref{eq87}) and (\ref{eq88}) gives
\ben
&&\sum^3_{i=1}\left|\int_{\pa B^+_R}U_i(x,z)\ov{W_j(x,z)}dx\right|
+\sum^4_{i=1}\left|\int_{\pa B^+_R}W_i(x,z)\ov{W_j(x,z)}dx\right|\\
&&\quad\le C(1+|z|)\left(C\frac{1+|z|}{R^{1/3}}+\frac{C}{R}\right)
\le C\frac{(1+|z|)^2}{R^{1/3}},\quad j=2,3.
\enn
The proof is thus completed.
\end{proof}

For $z\in\R^2$ define the function
\be\label{eq15}
F(R,z):=\int_{\pa B^+_R}\left|U(x,z)\right|^2dx,
\en
where $U(x,z)$ is given in (\ref{eq16}). The following lemma gives the properties of $F(R,z)$ for
sufficiently large $R$. The proof of this lemma is mainly based on the method of stationary phase
and will be presented in Appendix A.

\begin{lemma}\label{lem5}
For $z\in\R^2$ and $R>0$ we have $F(R,z)=F_0(z)+F_{0,Res}(R,z)$, where
{\small
\be\label{eq27}
F_0(z):=\int_{\Sp^1_+}\left|\int_{\Sp^1_-}u^\infty(\hat{x},d)e^{-ik z\cdot d}ds(d)
-\left(\frac{2\pi}{k}\right)^{1/2}e^{-\frac{\pi}{4}i}
\left(e^{-ik\hat{x}\cdot z'}+e^{-ik\hat{x}\cdot z}\right)\right|^2ds(\hat{x})\no\\
\en
}
which is independent on $R$, and $F_{0,Res}(R,z)$ satisfies the estimate
\be\label{eq76}
\left|F_{0,Res}(R,z)\right|\le C\frac{(1+|z|)^4}{R^{1/4}}
\en
for sufficiently large $R$. Here, $C>0$ is a constant independent of $R$ and $z$.
\end{lemma}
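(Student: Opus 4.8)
The plan is to expand the square $|U(x,z)|^2 = |U_1+U_2+U_3|^2$ on $\pa B^+_R$ and analyze each of the six terms (three diagonal, three cross) as $R\to\infty$ by the method of stationary phase. The point is that $U_2(x,z)$ and $U_3(x,z)$ are explicit oscillatory integrals over $\Sp^1_-$ with phase $k(x\cdot d'-z\cdot d)$ and $k(x\cdot d'-z'\cdot d)$ respectively; writing $x=R\hat x$ with $\hat x\in\Sp^1_+$ and parametrizing $d=(\cos\theta_d,\sin\theta_d)$, $\theta_d\in(\pi,2\pi)$, the $d$-phase $R\hat x\cdot d'$ has a single nondegenerate stationary point at $\theta_d$ such that $d' = -\hat x$, i.e. $d=-\hat x'$ (which indeed lies in $\Sp^1_-$ precisely when $\hat x\in\Sp^1_+$). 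Applying the standard stationary phase expansion there yields, for each fixed $z$,
\[
U_2(x,z) = -\left(\frac{2\pi}{kR}\right)^{1/2}e^{i(kR - \pi/4)}\,e^{-ik\hat x\cdot z} + O(R^{-3/2}),
\]
and similarly $U_3(x,z) = -\left(\frac{2\pi}{kR}\right)^{1/2}e^{i(kR-\pi/4)}e^{-ik\hat x\cdot z'} + O(R^{-3/2})$, with the constants in the remainders depending polynomially on $|z|$ (since the amplitude $e^{-ikz\cdot d}$ and its $\theta_d$-derivatives are bounded by $C(1+|z|)$, and one extra derivative by the two-term expansion gives $(1+|z|)^2$ in the $R^{-3/2}$ term). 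For $U_1(x,z)=\int_{\Sp^1_-}u^s(x,d)e^{-ikz\cdot d}ds(d)$ I would instead substitute the asymptotics from Lemma \ref{le3}: $u^s(x,d) = e^{ikR}R^{-1/2}u^\infty(\hat x,d) + u^s_{Res}$, which immediately gives $U_1(x,z) = e^{ikR}R^{-1/2}\int_{\Sp^1_-}u^\infty(\hat x,d)e^{-ikz\cdot d}ds(d) + O(R^{-3/2})$ uniformly in $\hat x$, using (\ref{eq94-})--(\ref{eq94}).

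\textbf{Assembling the leading term.} Factoring out the common $e^{ikR}R^{-1/2}$ from all three pieces, I get
\[
U(R\hat x,z) = \frac{e^{ikR}}{R^{1/2}}\left[\int_{\Sp^1_-}u^\infty(\hat x,d)e^{-ikz\cdot d}ds(d) - \left(\frac{2\pi}{k}\right)^{1/2}e^{-i\pi/4}\left(e^{-ik\hat x\cdot z} + e^{-ik\hat x\cdot z'}\right)\right] + O(R^{-3/2}),
\]
where the bracketed quantity is exactly the integrand amplitude $A(\hat x,z)$ whose modulus squared appears in $F_0(z)$. Then $|U(R\hat x,z)|^2 = R^{-1}|A(\hat x,z)|^2 + O(R^{-2})$ with $|z|$-dependence of the form $(1+|z|)^2$ in the main term and up to $(1+|z|)^4$ after multiplying the $O(R^{-3/2})$ cross terms against the $O(R^{-1/2})$ main term. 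Writing $ds(x) = R\,ds(\hat x)$ on $\pa B^+_R$, the $R^{-1}\cdot R$ cancellation produces
\[
F(R,z) = \int_{\Sp^1_+}|A(\hat x,z)|^2\,ds(\hat x) + F_{0,Res}(R,z) = F_0(z) + F_{0,Res}(R,z),
\]
with $|F_{0,Res}(R,z)| \le C(1+|z|)^4 R^{-1/2}$ from the naive bookkeeping above. To sharpen the exponent from $R^{-1/2}$ to $R^{-1/4}$ (as claimed in (\ref{eq76})) I would split the $\hat x$-integral near the two "bad" directions where the stationary point of the $d$-phase collides with an endpoint of $\Sp^1_-$ — namely $\hat x$ near $(\pm1,0)$, where $d=-\hat x'\to(\mp1,0)\in\pa\Sp^1_-$ — on a set of measure $O(\vep)$ where only the crude bound $|U|\le C(1+|z|)R^{-1/2}$ (from Lemma \ref{le4}) is available, and use the full stationary-phase estimate with uniform remainder on the complement; optimizing $\vep$ against the $R^{-3/2}$ interior remainder balanced over the $\hat x$-integral gives the stated $R^{-1/4}$. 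The detailed stationary-phase lemma and this endpoint bookkeeping are what I would carry out in Appendix A.

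\textbf{Main obstacle.} The delicate point is the uniformity of the stationary-phase expansion for $U_2,U_3$ as $\hat x$ ranges over all of $\Sp^1_+$: the stationary point $\theta_d$ approaches the boundary of the integration interval $[\pi,2\pi]$ as $\hat x\to(\pm1,0)$, so the classical expansion degenerates there and the implied constants blow up. Handling this requires either a uniform (Erd\'elyi-type) stationary-phase estimate or — the route I expect to take, following the style already used in the proof of Lemma \ref{lem10} — a careful dyadic/endpoint decomposition combined with the non-stationary-phase bound of Lemma \ref{le1} on the cutoff region, trading a loss in the $R$-exponent for uniformity. Keeping explicit track of the polynomial dependence on $|z|$ through every estimate (it enters via $\|e^{-ikz\cdot\,\cdot}\|_{C^1}\le C(1+|z|)$ and, after two integrations by parts or a second-order expansion, via $(1+|z|)^2$, then squared again in $|U|^2$) is the other bit of necessary care, and is the source of the $(1+|z|)^4$ in (\ref{eq76}).
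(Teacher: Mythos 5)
Your proposal follows essentially the same route as the paper's Appendix~A proof: extract the leading term of $U_1$ from Lemma \ref{le3} (the paper's Lemma \ref{lem7}), extract the leading terms of $U_2,U_3$ by stationary phase in $d$ (the paper's Lemma \ref{lem6}, carried out with Olver-type error bounds), and then bound $F_{0,Res}$ by splitting $\pa B^+_R$ near the degenerate directions $\hat{x}\approx(\pm1,0)$, using the crude bounds of Lemma \ref{le4} on that small arc and the stationary-phase remainder elsewhere, with the cutoff optimized to $\delta=R^{-1/4}$ --- exactly the paper's bookkeeping. Two small slips, neither fatal: the stationary point of the phase $x\cdot d'$ over $\Sp^1_-$ is $d=\hat{x}'$ (not $d=-\hat{x}'$, which lies in $\Sp^1_+$), so $U_2$ contributes $e^{-ik\hat{x}\cdot z'}$ and $U_3$ contributes $e^{-ik\hat{x}\cdot z}$, which is harmless since only their sum enters $F_0$; and after removing only the leading stationary term the remainders of $U_2,U_3$ are $O(|x|^{-1})$ (endpoint contributions from the boundary of $\Sp^1_-$), not $O(|x|^{-3/2})$, with constants blowing up like $1/\delta$ as $\theta_{\hat{x}}\to 0,\pi$ (the paper's Lemmas \ref{lem6} and \ref{lem9}), which is precisely why the final rate is only $R^{-1/4}$ rather than the $R^{-1/2}$ of your naive bookkeeping.
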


From (\ref{eq89}) it follows that
{\small
\ben
I^{Phaseless}(z)=\int_{\pa B^+_R}|U(x,z)|^2dx+2\Rt\int_{\pa B^+_R}U(x,z)\ov{W(x,z)}dx
+\int_{\pa B^+_R}|W(x,z)|^2dx.
\enn
}
Define
\ben
F_{Res}(R,z):=2\textrm{Re}\int_{\pa B^+_R}U(x,z)\ov{W(x,z)}dx+\int_{\pa B^+_R}|W(x,z)|^2dx.
\enn
Then, by Lemmas \ref{lem11}, \ref{lem10} and \ref{lem5} we obtain the main theorem of this section.

\begin{theorem}\label{thm1}
For $z\in\R^2$ and $R>0$ we have
\be\label{eq90}
I^{Phaseless}(z)=F(R,z)+F_{Res}(R,z),
\en
where $F(R,z)$ is defined in (\ref{eq15}) and $F_{Res}(R,z)$ satisfies the estimate
\be\label{eq91}
\left|F_{Res}(R,z)\right|\leq C\frac{(1+|z|)^2}{R^{1/3}}
\en
for $R$ large enough and $C>0$ independent of $R$ and $z$.
Further, $F(R,z)=F_0(z)+F_{0,Res}(R,z)$, where $F_0(z)$ is defined in (\ref{eq27})
and $F_{0,Res}(R,z)$ satisfies the estimate (\ref{eq76}).
\end{theorem}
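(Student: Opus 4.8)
The identity (\ref{eq90}) is essentially immediate: by (\ref{eq89}) and the expansion $|U+W|^2=|U|^2+2\Rt(U\ov W)+|W|^2$ already recorded before the theorem, $I^{Phaseless}(z)$ equals $\int_{\pa B^+_R}|U(x,z)|^2dx$, which is exactly $F(R,z)$ by the definition (\ref{eq15}), plus the quantity $F_{Res}(R,z)$ defined just above the theorem. Hence (\ref{eq90}) holds, and the substance of (\ref{eq91}) is the single estimate $|F_{Res}(R,z)|\le C(1+|z|)^2R^{-1/3}$; the concluding assertion that $F(R,z)=F_0(z)+F_{0,Res}(R,z)$ with $F_0$ given by (\ref{eq27}) and $F_{0,Res}$ obeying (\ref{eq76}) is precisely Lemma \ref{lem5} and needs nothing further. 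So the plan is to reduce the whole theorem to a bound on $F_{Res}$.

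To bound $F_{Res}$, I would expand $U=U_1+U_2+U_3$ and $W=W_1+W_2+W_3+W_4$ and organize the resulting bilinear terms according to which factor $W_j$ they carry, since the four components of $W$ are of very different sizes: Lemmas \ref{le4} and \ref{lem12} give $|W_1(x,z)|\le C(1+|z|)|x|^{-1}$ and $|W_4(x,z)|\le C|x|^{-1}$, but only $|W_2(x,z)|,|W_3(x,z)|\le C|x|^{-1/2}$. For the terms carrying $W_1$ or $W_4$ the needed decay is handed over directly by Lemma \ref{lem11}, which gives $|\int_{\pa B^+_R}U\ov{W_j}dx|\le C(1+|z|)^2R^{-1/2}$ and $\int_{\pa B^+_R}|W_j|^2dx\le C(1+|z|)^2R^{-1}$ for $j=1,4$; the remaining mixed pieces $\int_{\pa B^+_R}W_i\ov{W_j}dx$ with $j\in\{1,4\}$ are controlled by Cauchy--Schwarz together with the pointwise bounds just listed, using that $|x|=R$ on $\pa B^+_R$ and the arc length there is $\pi R$, so that $\|W_j(\cdot,z)\|_{L^2(\pa B^+_R)}\le C(1+|z|)R^{-1/2}$ for $j=1,4$ and $\|W_i(\cdot,z)\|_{L^2(\pa B^+_R)}\le C$ for $i=2,3$, whence each such term is $O((1+|z|)R^{-1/2})$. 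For the terms carrying $W_2$ or $W_3$ — which decay only like $|x|^{-1/2}$ pointwise, so that Cauchy--Schwarz alone is not enough — one invokes Lemma \ref{lem10}, which bounds exactly $\sum_{i=1}^3|\int_{\pa B^+_R}U_i\ov{W_j}dx|+\sum_{i=1}^4|\int_{\pa B^+_R}W_i\ov{W_j}dx|$ by $C(1+|z|)^2R^{-1/3}$ for $j=2,3$. Summing the four groups of contributions, the slowest rate collected is $R^{-1/3}$, which gives (\ref{eq91}).

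I do not expect a genuine obstacle at the level of this theorem: the two analytically heavy ingredients — Lemma \ref{lem10} (the oscillatory integral estimates, themselves resting on Lemma \ref{le1}) and Lemma \ref{lem5} (the stationary phase computation carried out in Appendix A) — have already absorbed the real work. The only thing demanding attention is the bookkeeping, namely checking that every term produced by expanding $|U+W|^2$ and then splitting $U$ and $W$ into their components is matched with precisely one of the estimates of Lemma \ref{lem11}, Lemma \ref{lem10}, or a Cauchy--Schwarz bound, and that none of the decay exponents accumulated along the way is worse than $1/3$. Assembling these bounds to get (\ref{eq91}) and then quoting Lemma \ref{lem5} for the decomposition $F(R,z)=F_0(z)+F_{0,Res}(R,z)$ and the estimate (\ref{eq76}) completes the proof.
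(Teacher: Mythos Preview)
Your proposal is correct and follows essentially the same route as the paper: the paper's proof is the single sentence ``by Lemmas \ref{lem11}, \ref{lem10} and \ref{lem5} we obtain the main theorem,'' and your expansion is a faithful unpacking of that citation. Your extra Cauchy--Schwarz step for the cross terms $\int W_i\ov{W_j}$ with $i\in\{2,3\}$, $j\in\{1,4\}$ is harmless but not strictly needed, since $\Rt\int W_i\ov{W_j}=\Rt\int W_j\ov{W_i}$ places those terms already under Lemma \ref{lem10}; the only cross term genuinely requiring Cauchy--Schwarz (or the pointwise bounds of Lemma \ref{lem12}) is $\int W_1\ov{W_4}$, which your argument also covers.
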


With the help of the above analysis, we now study properties of the imaging function $I^{Phaseless}(z)$,
$z\in\R^2$. Let $K$ be a bounded domain which contains the local perturbation $\G_p$ of
the locally rough surface $\G$. From Theorem \ref{thm1} it is easy to see that
if $R$ is large enough then $I^{Phaseless}(z)\approx F(R,z)$ for $z\in K$ with $F(R,z)$ given by (\ref{eq15}).
Thus the imaging function $I^{Phaseless}(z)$ is approximately equal to the function $F(R,z)$ for $z\in K$.
Therefore, in what follows, we investigate the properties of the function $F(R,z)$.
We will make use of the theory of scattering by unbounded rough surfaces.
To this end, for $b\in\R$ let $U^+_b=\{x=(x_1,x_2)\in\R^2|x_2> b\}$ and $\G_b=\{x=(x_1,x_2)\in\R^2|x_2= b\}$.
Further, let $BC(\G)$ denote the Banach space of functions which are bounded and continuous on $\G$
with the norm $\|\psi\|_{\infty,\G}:=\sup_{x\in\G}|\psi(x)|$ for $\psi\in BC(\G)$.
Then the problem of scattering by an unbounded, sound-soft, rough surface can be formulated
as the following Dirichlet boundary value problem (see \cite{CZ98,CZ99,ZC03}).

{\bf Dirichlet problem (DP)}: Given $g\in BC(\G)$, determine $u\in C^2(D_+)\cap C(\ov{D_+})$
such that

(i) $u$ satisfies the Helmholtz equation (\ref{eq1});

(ii) $u=g$ on $\G$;

(iii) For some $a\in\R$,
\be\label{eq109}
\sup_{x\in D_+} x^a_2|u(x)|<\infty;
\en

(iv) $u$ satisfies the upward propagating radiation condition (UPRC): for some
$b>h_+:=\sup_{x_1\in\R}h(x_1)$ and $\phi\in L^\infty(\G_b)$,
\be\label{eq108}
u(x)=2\int_{\G_b}\frac{\pa\Phi_k(x,y)}{\pa y_2}\phi(y)ds(y),\quad x\in U^+_b,
\en
where $\Phi_k(x,y):=(i/4)H^{(1)}_0(k|x-y|),x,y\in\R^2,x\neq y$, is the free-space Green's
function for the Helmholtz equation $\Delta u+k^2u=0$ in $\R^2$.

The well-posedness of the problem (DP) has been established in \cite{CZ98,CZ99,ZC03},
using the integral equation method.
The following theorem tells us that for arbitrarily fixed $z\in\R^2$ the function $U(x,z)$
given by (\ref{eq16}) is the unique solution to the Dirichlet problem (DP) with the boundary data
involving the Bessel function of order $0$.

\begin{theorem}\label{thm3}
For arbitrarily fixed $z\in\R^2$, $U(x,z)$ given by (\ref{eq16}) satisfies the Dirichlet problem (DP)
with the boundary data
\be\label{eq20}
g(x)=g_z(x):=-2\pi J_0(k|x-z|),\quad x\in\G,
\en
where $J_0$ is the Bessel function of order $0$.
\end{theorem}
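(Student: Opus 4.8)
The plan is to verify directly that $U(\cdot,z)$ satisfies the four defining properties (i)--(iv) of the problem (DP) with boundary datum $g_z$; since $g_z\in BC(\G)$ (recall that $J_0$ is bounded), this is precisely what the theorem claims. Property (i) is immediate: each of $u^s(\cdot,d)$ and $e^{ik\,\cdot\,d'}$ solves the Helmholtz equation (\ref{eq1}) in $D_+$, and since in (\ref{eq44})--(\ref{eq46}) the integration runs over the \emph{compact} set $\Sp^1_-$ while the integrands together with their $x$-derivatives up to second order are continuous in $(x,d)\in D_+\times\Sp^1_-$, we may differentiate under the integral sign to conclude $\Delta U+k^2U=0$ in $D_+$.

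For property (ii), the key input is the Dirichlet condition $u(x,d)=u^i(x,d)+u^r(x,d)+u^s(x,d)=0$ on $\G$, which, together with $u^i(x,d)=e^{ikx\cdot d}$ and $u^r(x,d)=-e^{ikx\cdot d'}$, gives $u^s(x,d)=-e^{ikx\cdot d}+e^{ikx\cdot d'}$ for $x\in\G$. Substituting this into $U_1$ and using the elementary reflection identities $x\cdot d'=x'\cdot d$ and $d'\in\Sp^1_+$ for $d\in\Sp^1_-$, one sees that the $e^{ikx\cdot d'}$-term of $U_1$ cancels $U_2$ identically on $\G$, while $U_3(x,z)=-\int_{\Sp^1_-}e^{ik(x'-z')\cdot d}ds(d)$. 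Writing $w=x-z$ and noting that $x'-z'=(x-z)'=w'$ and $w'\cdot d=w\cdot d'$, we therefore obtain, for $x\in\G$,
\[
U(x,z)=-\int_{\Sp^1_-}e^{ikw\cdot d}\,ds(d)-\int_{\Sp^1_-}e^{ikw\cdot d'}\,ds(d)=-\int_{\Sp^1}e^{ikw\cdot d}\,ds(d)=-2\pi J_0(k|x-z|),
\]
where the second equality uses that $d\mapsto d'$ maps $\Sp^1_-$ bijectively (and isometrically) onto $\Sp^1_+$, and the last is the standard integral representation of $J_0$ (the Jacobi--Anger identity). This is exactly $g_z(x)$.

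Property (iii) holds with $a=0$: the integrands in (\ref{eq16}) are continuous up to $\G$, so $U(\cdot,z)\in C(\ov{D_+})$; moreover Lemma \ref{le4} gives $|U(x,z)|\le C(1+|z|)|x|^{-1/2}$ for $|x|$ large, and since $h$ has compact support every point of $D_+$ with $|x|$ large lies in $\R^2_+$, so combining the two bounds shows that $U(\cdot,z)$ is bounded on $D_+$. Finally, for property (iv) --- the UPRC --- I would argue termwise, using the linearity of (\ref{eq108}). The scattered field $u^s(\cdot,d)$ satisfies the UPRC (this is part of its characterisation in the well-posedness theory of \cite{Willers1987,ZZ13}; equivalently, a bounded solution of the Helmholtz equation in $U^+_b$ obeying the Sommerfeld condition (\ref{rc}) admits the half-plane Green's representation $u^s(x,d)=2\int_{\G_b}\frac{\pa\Phi_k(x,y)}{\pa y_2}u^s(y,d)\,ds(y)$ for $x\in U^+_b$, $b>h_+$). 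Integrating this identity against $e^{-ikz\cdot d}$ over $d\in\Sp^1_-$ and interchanging the order of integration yields the UPRC for $U_1$ with density $U_1(\cdot,z)|_{\G_b}\in L^\infty(\G_b)$. For $U_2$ and $U_3$, after the substitution $e=d'$ these become superpositions of upward propagating plane waves $e^{ikx\cdot e}$ with $e\in\Sp^1_+$, and each such plane wave satisfies $e^{ikx\cdot e}=2\int_{\G_b}\frac{\pa\Phi_k(x,y)}{\pa y_2}e^{iky\cdot e}\,ds(y)$ for $x\in U^+_b$ (the Weyl/angular-spectrum representation of $\Phi_k$, cf. \cite{CZ98}); integrating over $e$ gives the UPRC for $U_2,U_3$ with bounded densities, and adding the three contributions establishes (iv). The one delicate point is that the integrals over $\G_b$ are only conditionally convergent --- both $\frac{\pa\Phi_k(x,y)}{\pa y_2}$ and the densities decay merely like $|y|^{-1/2}$ as $|y|\to\infty$ --- so the interchange of the $\Sp^1_\pm$-integration with the $\G_b$-integration must be justified by first truncating $\G_b$ to $\G_b\cap B_A$, interchanging freely, and then letting $A\to\infty$ with the help of the uniform estimates in Lemmas \ref{le3} and \ref{le4}; this is precisely the limiting-integral setting of \cite{CZ98,CZ99,ZC03}, and I expect this convergence bookkeeping to be the only genuine obstacle, everything else being routine.
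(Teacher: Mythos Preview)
Your proof is correct and follows essentially the same route as the paper's: both verify (i)--(iv) for $U(\cdot,z)$ by reducing to the corresponding properties of $u^s(\cdot,d)$ and the upward plane waves $e^{ikx\cdot d'}$, compute the boundary value via the identity $\int_{\Sp^1}e^{ikw\cdot d}\,ds(d)=2\pi J_0(k|w|)$ (which the paper calls the Funk--Hecke formula), and cite the rough-surface literature \cite{CZ98b} for the fact that Sommerfeld-radiating fields and upward plane waves satisfy the UPRC. The only cosmetic difference is that the paper first packages $\widetilde{u}^s:=u^s-e^{ikx\cdot d'}$ as a (DP)-solution with boundary data $-e^{ikx\cdot d}$ and then superposes, whereas you work directly with $U_1,U_2,U_3$.
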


\begin{proof}
Arbitrarily fix $d\in\Sp^1_-$ and define $\wid{u}^s(x,d):=u^s(x,d)-e^{ik x\cdot d'}$ with $x\in D_+$.
From the well-posedness of the scattering problem (\ref{eq1})-(\ref{rc}), it is easily seen
that $\wid{u}^s(x,d)$ satisfies the Helmholtz equation (\ref{eq1}) and the condition (\ref{eq109}).
Since $u^s(x,d)$ satisfies the Sommerfeld radiation condition (\ref{rc}), it follows
from \cite[Theorem 2.9]{CZ98b} that $u^s(x,d)$ also satisfies the UPRC condition (\ref{eq108}).
Further, by \cite[Remark 2.15]{CZ98b} we know that $e^{ik x\cdot d'}$ satisfies the UPRC
condition (\ref{eq108}). As a result, $\wid{u}^s(x,d)$ satisfies the UPRC condition (\ref{eq108}),
and thus apply the boundary condition (\ref{eq2}) to deduce that $\wid{u}^s(x,d)$ is the solution
to the Dirichlet problem (DP) with the boundary data $g(x)=-u^i(x,d)=-e^{ik x\cdot d}$.
Furthermore, by the definition of $U(x,z)$ we see that
\ben
U(x,z)=\int_{\Sp^1_-}\left[\wid{u}^s(x,d)e^{-ik z\cdot d}-e^{ik x\cdot d'-z'\cdot d}\right]ds(d).
\enn
Then, by the Funk-Hecke formula (see, e.g., \cite[Lemma 2.1]{ZZ18}) it is derived that
$U(x,z)=-2\pi J_0(k|x-z|)$, $x\in\G$, and so, $U(x,z)$ satisfies the Dirichlet problem (DP)
with the boundary data given by (\ref{eq20}). The theorem is thus proved.
\end{proof}

Properties of solutions to the Dirichlet problem (DP) with the boundary data $g(x)=aJ_0(k|x-z|)$, $x\in\G$,
for any $a\in\R$ have been investigated in the case when $\G$ is a globally rough surface
(see \cite[Section 3]{LZZ18}). From the discussions in \cite[Section 3]{LZZ18}, it is expected that
for any $x$ in the compact subset of $D_+$ the function $U(x,z)$ given in (\ref{eq16}) will take
a large value when $z\in\G$ and decay as $z$ moves away from $\G$.
As a result, it is expected that for any fixed $R>0$ the function $F(R,z)$ defined in (\ref{eq15})
will take a large value when $z\in\G$ and decay as $z$ moves away from $\G$.
Thus, by Theorem \ref{thm1} we know that for any bounded sampling region $K$
the imaging function $I^{Phaseless}(z)$ will have similar properties as $F(R,z)$ with $z\in K$
if $R$ is large enough, as seen in the numerical experiments presented in the next section.

\begin{remark}\label{re1}{\rm
In the numerical experiments, we measure the phaseless total-field data $|u(x^{(i)},d^{(j)})|$,
$i=1,2,\ldots,M,\;j=1,2,\ldots,N$, where $x^{(i)}=(x^{(i)}_1,x^{(i)}_2)$ and $d^{(j)}=(d^{(j)}_1,d^{(j)}_2)$
are uniformly distributed points on $\pa B^+_R$ and $\Sp^1_-$, respectively.
Accordingly, the imaging function $I^{Phaseless}(z)$ is approximated as
{\small
\be\label{eq26}
&&I^{Phaseless}(z)\no\\
&&\approx\frac{\pi^3 R}{MN^2}\sum\limits^{M}_{i=1}\left|
\sum\limits^{N}_{j=1}\left[\left(\left|u(x^{(i)},d^{(j)})\right|^2-2+e^{2ikx^{(i)}_2d^{(j)}_2}\right)
e^{ik(x^{(i)}-z)\cdot d^{(j)}}-e^{ik\left(x^{(i)'}-z'\right)\cdot d^{(j)}}\right]\right|^2,\no\\
\en
}
where $x^{(i)'}:=(x^{(i)}_1,-x^{(i)}_2)$.
}
\end{remark}

The direct imaging algorithm for our inverse problem can be given in the following algorithm.

\begin{algorithm}\label{al1}
Let $K$ be the sampling region which contains the local perturbation $\G_p$ of the locally rough surface $\G$.

1) Choose $\mathcal{T}_m$ to be a mesh of $K$ and take $R$ to be a large number.

2) Collect the phaseless total-field data $\left|u(x^{(i)},d^{(j)})\right|$,
$i=1,2,\ldots,M,\;j=1,2,\ldots,N$, with $x^{(i)}\in\pa B^+_R$ and $d^{(j)}\in{\Sp^1_-}$,
generated by the incident plane waves $u^i(x,d^{(j)})=e^{i{k}x\cdot d^{(j)}},\;j=1,2,\ldots,N$.

3) For all sampling points $z\in\mathcal{T}_m$, compute the imaging function $I^{Phaseless}(z)$
given in (\ref{eq26}).

4) Locate all those sampling points $z\in\mathcal{T}_m$ such that $I^{Phaseless}(z)$ takes a large value,
which represent the part of the locally rough surface $\G$ in the sampling region $K$.
\end{algorithm}

\begin{remark}\label{rem1} {\rm
Let $K$ be the bounded sampling domain as above.
From Lemma \ref{lem5} it is seen that if $R$ is large enough then $F(R,z)\approx F_0(z)$ for $z\in K$,
and so, by the properties of $F(R,z)$ as discussed above we know that
the function $F_0(z)$ defined in (\ref{eq27}) will be expected to take a large value when $z\in\G$ and
decay as $z$ moves away from $\G$. Based on this, we define $I^{Full}(z):=F_0(z)$ for $z\in\R^2$ to be
the imaging function with the full far-field data $u^\infty(\hat{x},d)$ with $\hat{x}\in\Sp^1_+$
and $d\in\Sp^1_-$. In the numerical experiments presented in the next section, we will show the
imaging results of $I^{Full}(z)$ to compare with those of the imaging function $I^{Phaseless}(z)$.
Therefore, we will take the full far-field measurement data $u^\infty(\hat{x}^{(i)},d^{(j)})$, $i=1,2,\ldots,L,\;j=1,2,\ldots,N$, where $\hat{x}^{(i)}$ and $d^{(j)}$ are uniformly distributed
points on $\Sp^1_+$ and $\Sp^1_-$, respectively.
Accordingly, the imaging function $I^{Full}(z)$ is approximated as
{\small
\ben
&&I^{Full}(z)\\
&&\approx\frac{\pi}{L}\sum\limits^{L}_{i=1}\left|\frac{\pi}{N}
\left(\sum\limits^{N}_{j=1}u^\infty(\hat{x}^{(i)},d^{(j)})e^{-ikz\cdot d^{(j)}}\right)
-\left(\frac{2\pi}{k}\right)^{1/2}e^{-\pi i/4}\left(e^{-ik\hat{x}^{(i)}\cdot z'}
+e^{-ik\hat{x}^{(i)}\cdot z}\right)\right|^2.
\enn
}
The direct imaging algorithm based on the imaging function $I^{Full}(z)$ can be given similarly
as in Algorithm \ref{al1}.
}
\end{remark}

\section{Numerical experiments}\label{sec4}
\setcounter{equation}{0}

In this section, we present several numerical experiments to demonstrate the effectiveness of our imaging
algorithm with the phaseless total-field data. Though the locally rough surface is assumed to be smooth
in the above sections, we will also consider the reconstructed results for the case when
the locally rough surface is piecewise smooth. In addition, in each examples, we will also present
imaging results of the imaging algorithm with full far-field data to compare the reconstruction results
using both the phaseless near-field measurement data and the full far-field measurement data.
To generate the synthetic data, we use the integral equation method proposed in \cite{ZZ13}
to solve the forward scattering problem (\ref{eq1})-(\ref{rc}).
Further, the noisy phaseless near-field data $|u_\delta(x,d)|$, $x\in\pa B^+_R,\;d\in{\Sp^1_-}$,
and the noisy full far-field data $u^\infty_\delta(\hat{x},d)$, $\hat{x}\in\Sp^1_+,\;d\in\Sp^1_-$,
are simulated by
\ben
|u_\delta(x,d)|&=&|u(x,d)|\left(1+\delta\zeta_1\right),\\
u^\infty_\delta(\hat{x},d)&=&u^\infty(\hat{x},d)+\delta\left(\zeta_2+i\zeta_3\right)|u^\infty(\hat{x},d)|,
\enn
where $\delta$ is the noise ratio and $\zeta_1,\zeta_2,\zeta_3$ are the normally distributed random
numbers in $[-1,1]$. In all the figures presented, we use solid line '-' to represents the actual curves.

\textbf{Example 1.} We first investigate the effect of the noise ratio $\delta$ on the imaging results.
The locally rough surface is given by
\ben
h(x_1)=0.1\phi\left(\frac{x_1+0.2}{0.3}\right)-0.08\phi\left(\frac{x_1-0.3}{0.2}\right),
\enn
where
\ben
\phi(x):=\sum^{5}_{j=0}\frac{(-1)^j\left(\begin{array}{c}5 \\ j \end{array}\right)
\left(x+\frac{5}{2}-j\right)^4_+}{4!}\quad\mbox{with}\;\;
x^4_+:=\left\{\begin{array}{ll}x^4 &x\ge0\\ 0 &x<0\end{array}\right.
\enn
is the cubic spline function. The wave number is set to be $k=40$.
We first consider the inverse problem with phaseless near-field data.
We choose the radius of the measurement circle $\pa B^+_R$ to be $R=4$ and the number of both
the measurement points and the incident directions to be the same with $M=N=200$.
Figure \ref{fig1} presents the imaging results of $I^{Phaseless}(z)$ from the measured
phaseless near-field data without noise, with $10\%$ noise, with $20\%$ noise and with $40\%$ noise,
respectively. Next, we consider the inverse problem with full far-field data.
We choose the number of both the measured observation directions and the measured incident directions
to be the same as well with $L=N=100$.
Figure \ref{fig2} presents the imaging results of $I^{Full}(z)$ from the measured full far-field data
without noise, with $10\%$ noise, with $20\%$ noise and with $40\%$ noise, respectively.
As shown by Figures \ref{fig1} and \ref{fig2}, the imaging results given by the imaging function
$I^{Phaseless}(z)$ with phaseless near-field data are good though the imaging results of the
imaging function $I^{Full}(z)$ with full far-field data are better than those of the imaging function
$I^{Phaseless}(z)$ with phaseless near-field data.

\begin{figure}[htbp]
\centering
\subfigure[\textbf{No noise, k=40, R=4}]{
\includegraphics[width=2in]{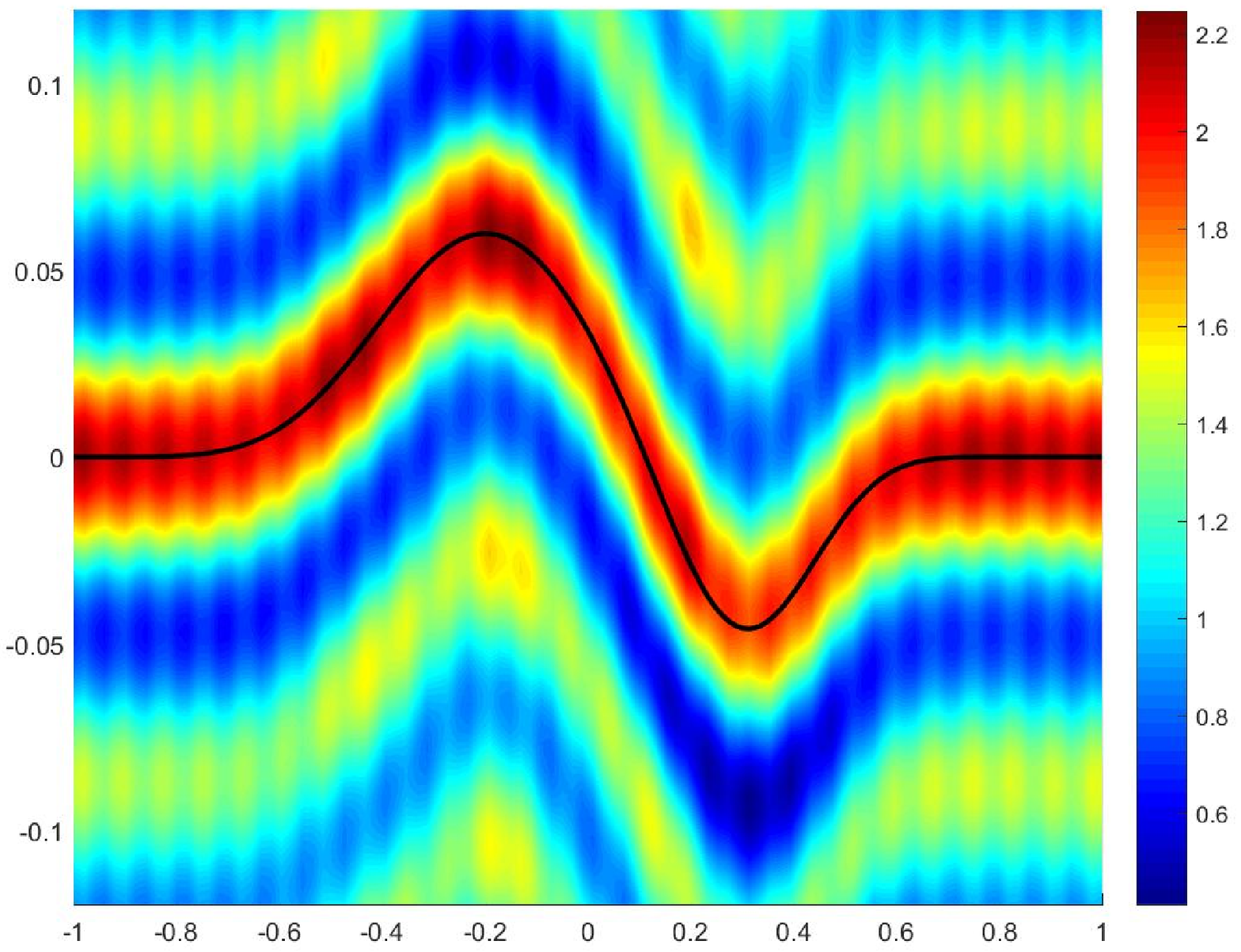}}
\subfigure[\textbf{10\% noise, k=40, R=4}]{
\includegraphics[width=2in]{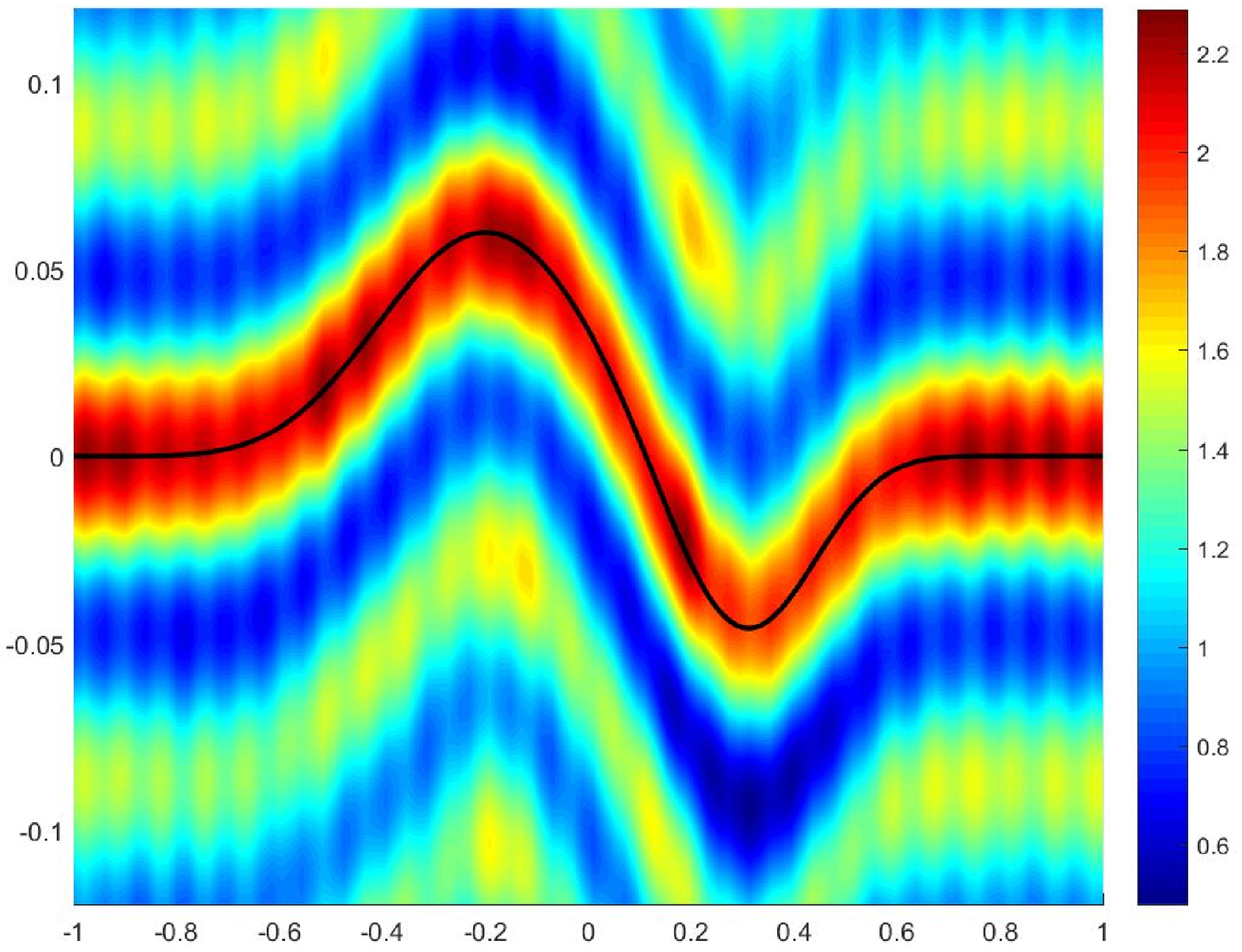}}
\subfigure[\textbf{20\% noise, k=40, R=4}]{
\includegraphics[width=2in]{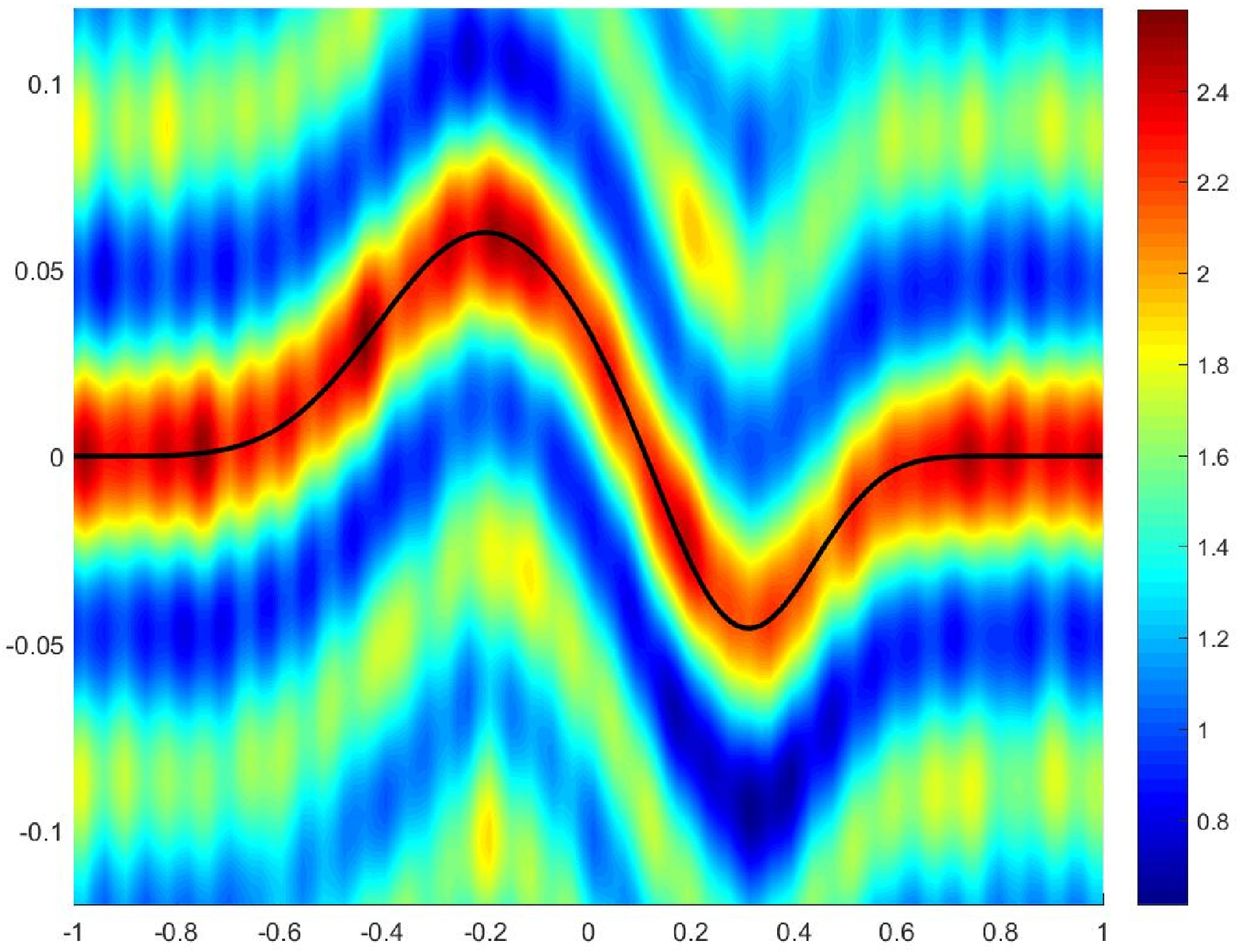}}
\subfigure[\textbf{40\% noise, k=40, R=4}]{
\includegraphics[width=2in]{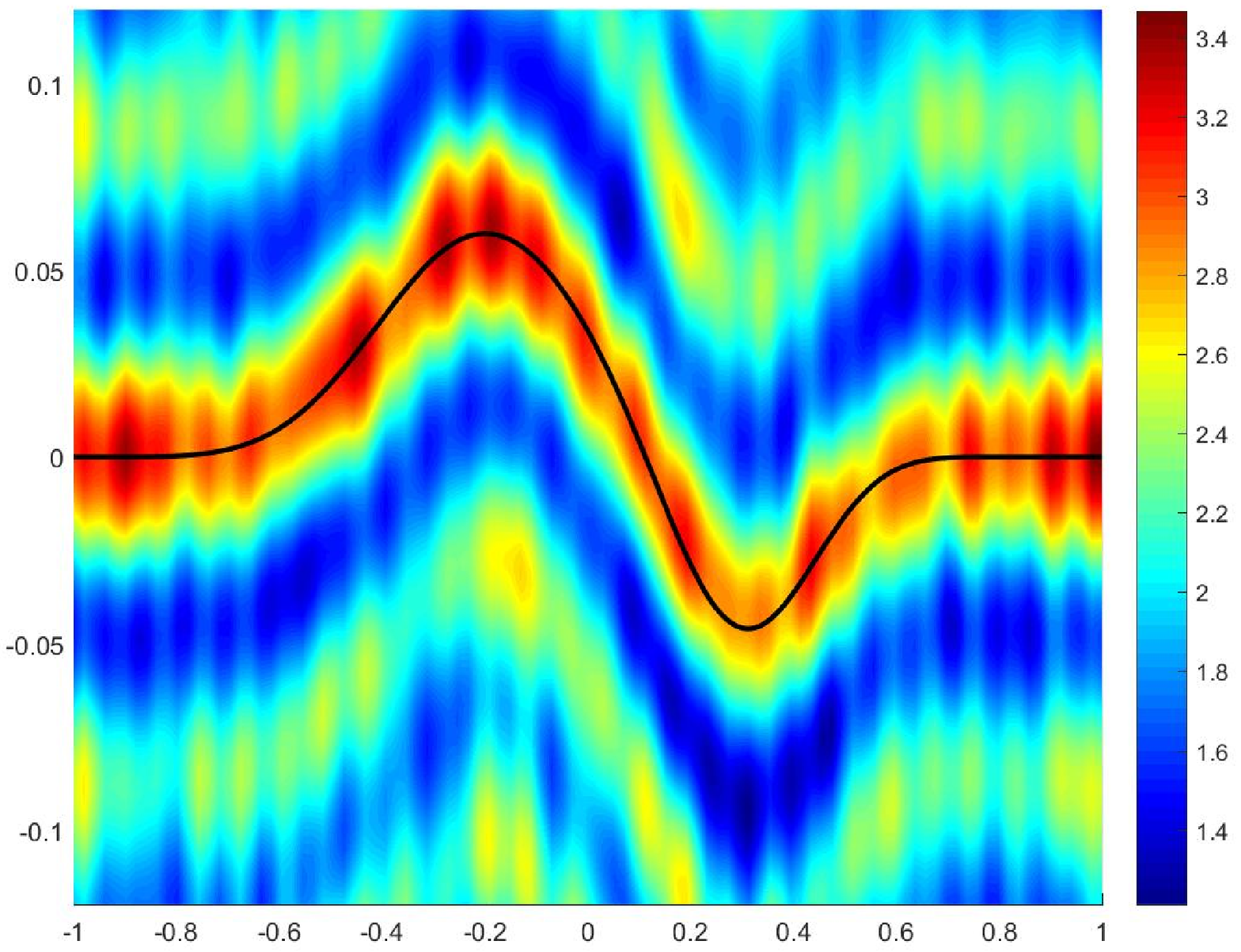}}
\caption{Imaging results of $I^{Phaseless}(z)$ with measured phaseless near-field data,
where the solid line '-' represents the actual curve. The number of both the measured points and the
incident directions is chosen to be the same with $M=N=200$.
}\label{fig1}
\end{figure}

\begin{figure}[htbp]
  \centering
  \subfigure[\textbf{No noise, k=40}]{
    \includegraphics[width=2in]{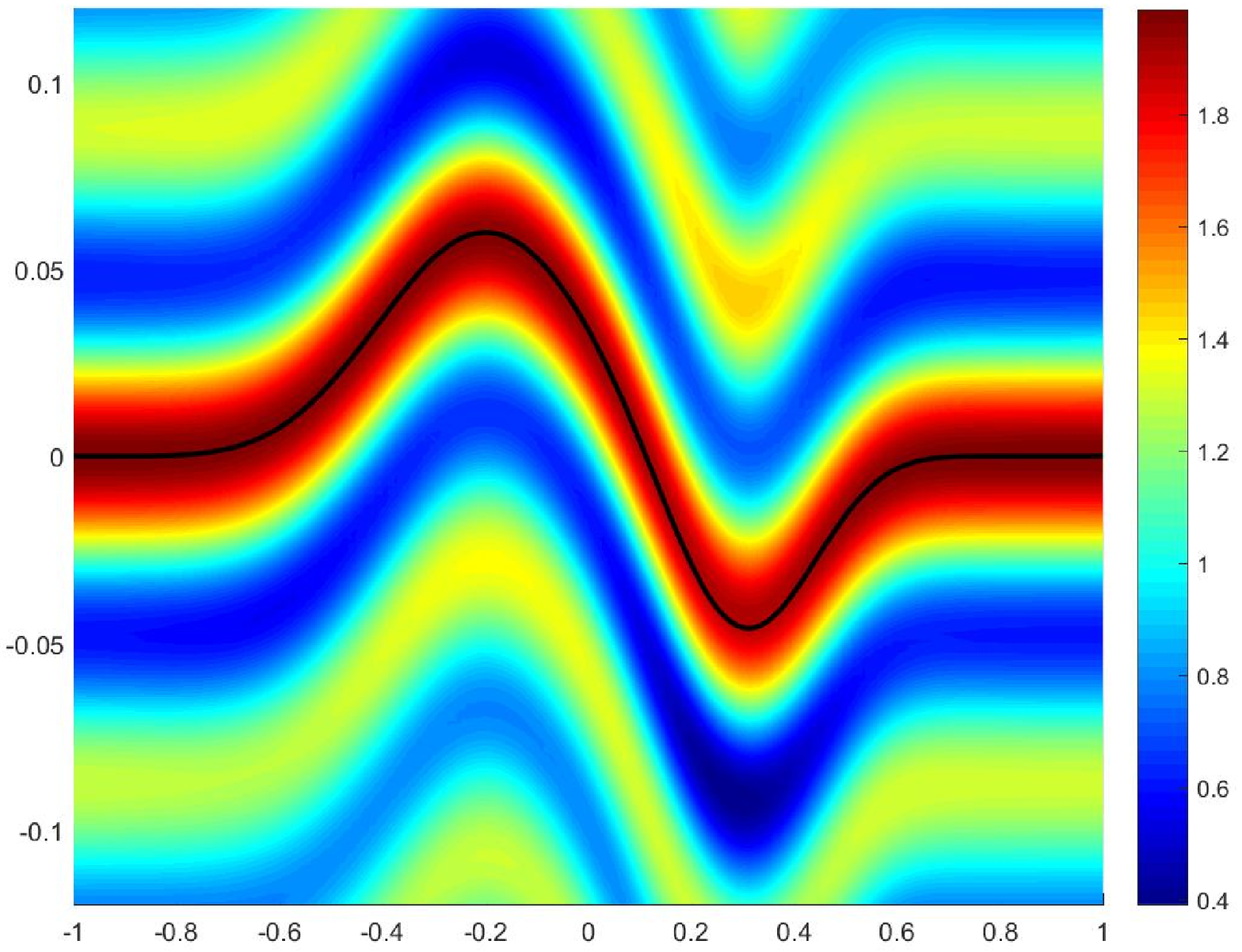}}
  \subfigure[\textbf{10\% noise, k=40}]{
    \includegraphics[width=2in]{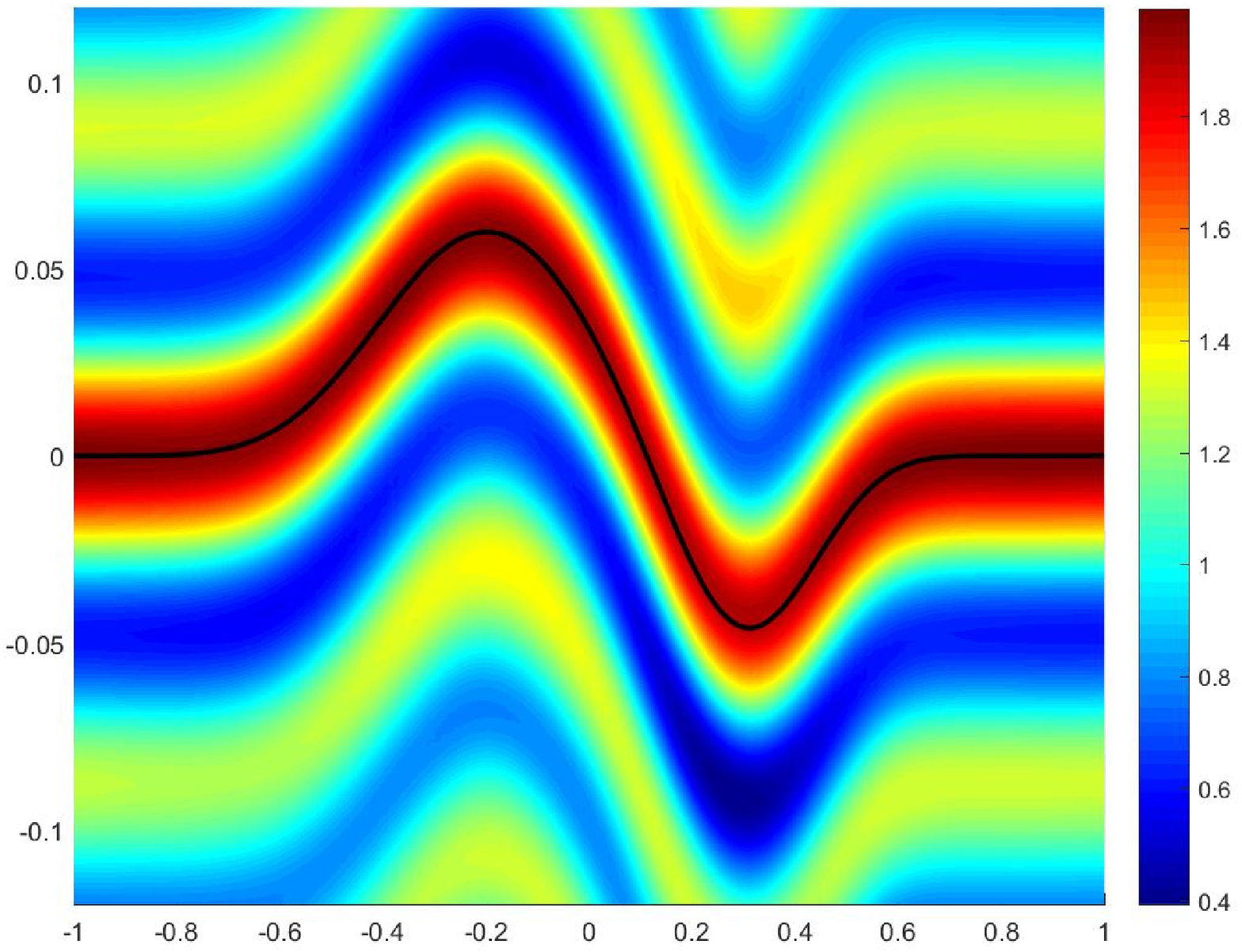}}
  \subfigure[\textbf{20\% noise, k=40}]{
    \includegraphics[width=2in]{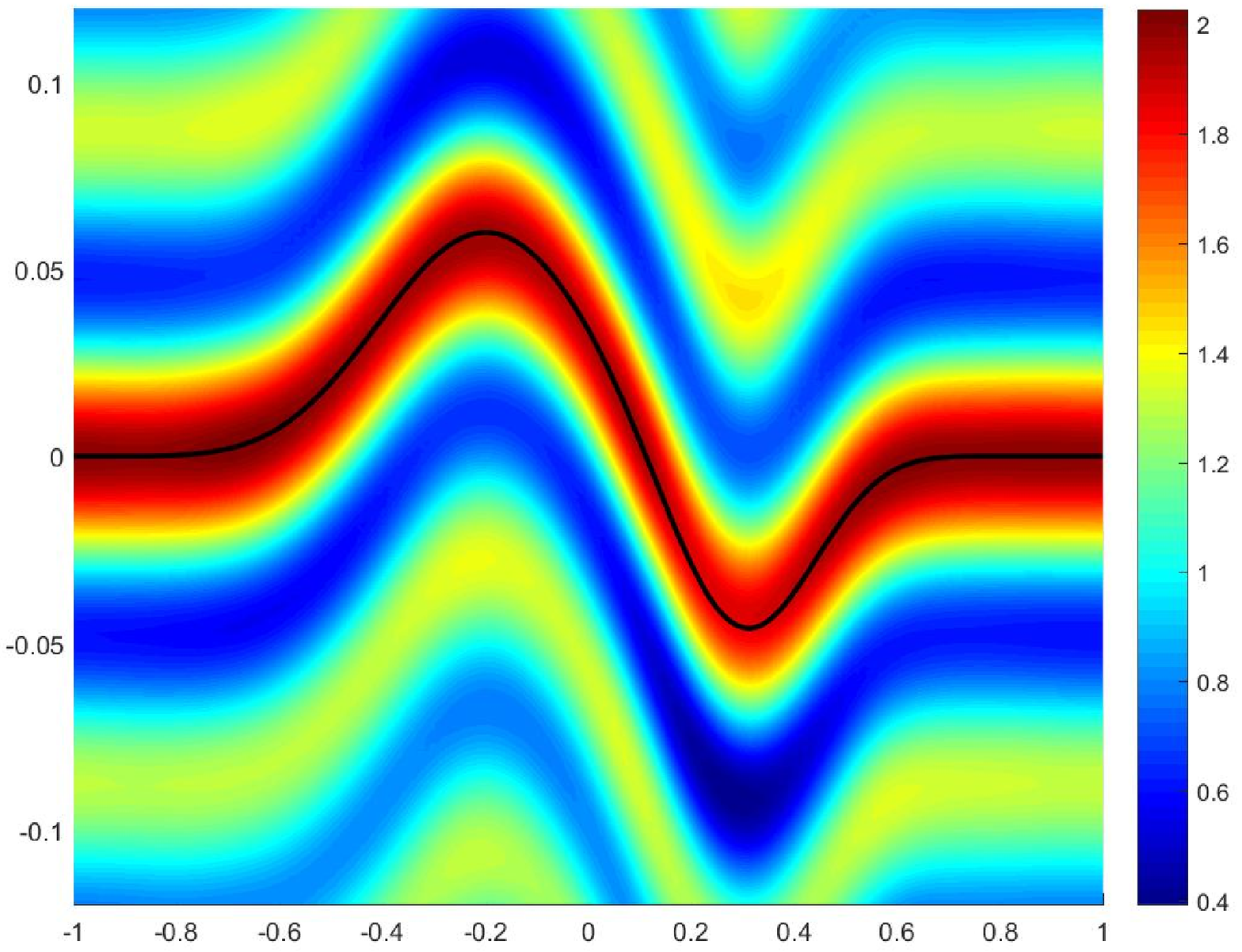}}
  \subfigure[\textbf{40\% noise, k=40}]{
    \includegraphics[width=2in]{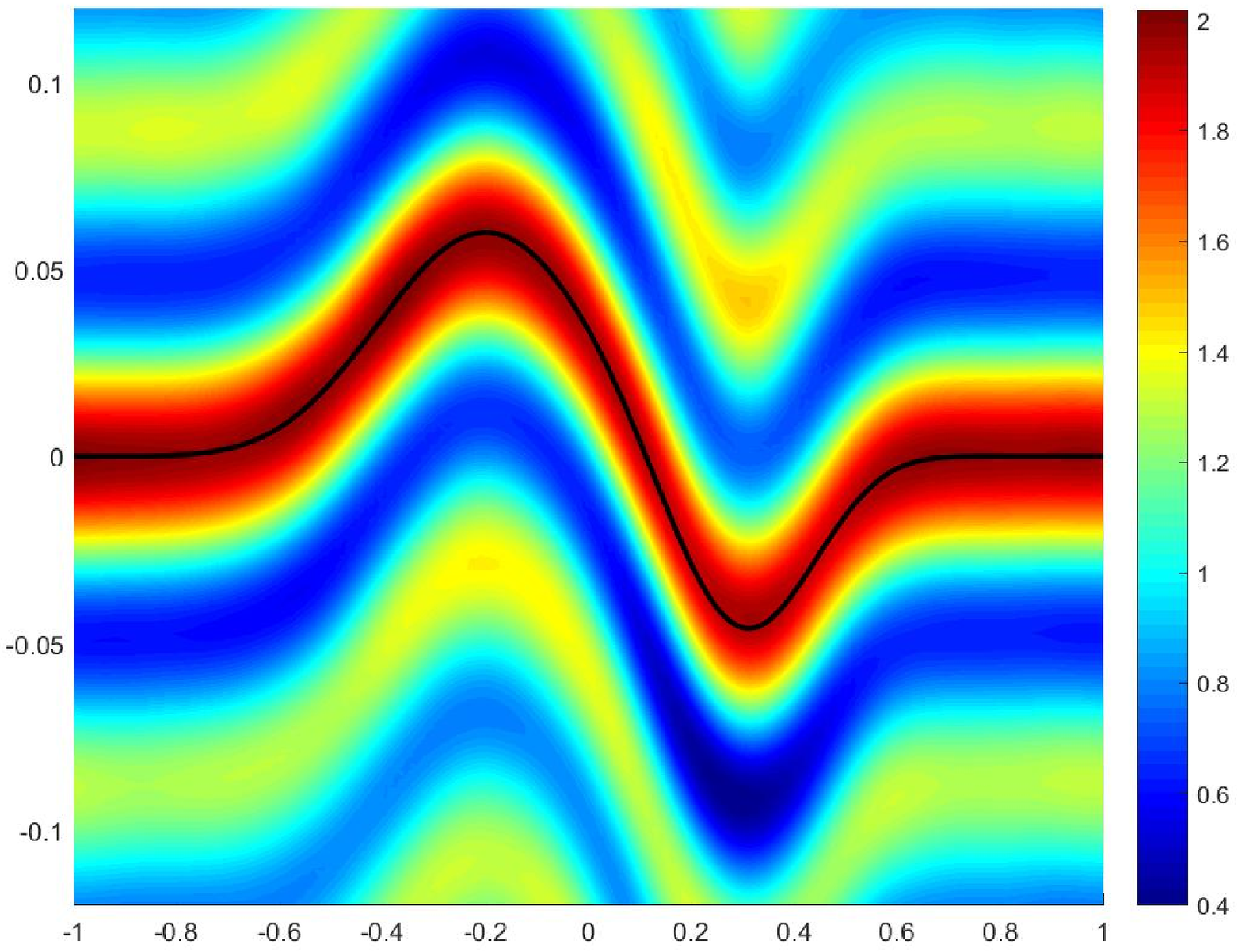}}
\caption{Imaging results of $I^{Full}(z)$ with measured full far-field data, where the solid line '-'
represents the actual curve. The number of both the measured points and the incident directions is
chosen to be the same with $L=N=100$.
}\label{fig2}
\end{figure}

\textbf{Example 2.} We now consider the case when the local perturbation part of the boundary $\G$ is
piecewise linear (the solid line in Figure \ref{fig3}).
We choose the wave number to be $k=80$ and the noise ratio to be $\delta=20\%$.
First consider the inverse problem with phaseless near-field data.
For this case, we investigate the effect of the radius $R$ of the measurement circle
$\pa B^+_R$ on the imaging results.
We choose the number of both the measurement points and the incident directions to be the same with
$M=N=200$. Figures \ref{fig3-a}-\ref{fig3-c} present the imaging results of $I^{Phaseless}(z)$ with
the measurement phaseless near-field data with the radius of the measurement circle
$\pa B^+_R$ to be $R=1.2,\,1.6,\,2$, respectively.
From Figures \ref{fig3-a}-\ref{fig3-c} it is seen that the reconstruction result is getting better with
the radius of the measurement circle getting larger.

Second, we consider the inverse problem for full far-field data.
We choose the numbers of measured directions and incident directions to be $L=N=100$.
Figure \ref{fig3-d} presents the imaging results of $I^{{Full}}$ from the measured
full far-field data.

\begin{figure}[htbp]
\centering
\subfigure[\textbf{20\% noise, k=80, R=1.2}]{\label{fig3-a}
\includegraphics[width=2in]{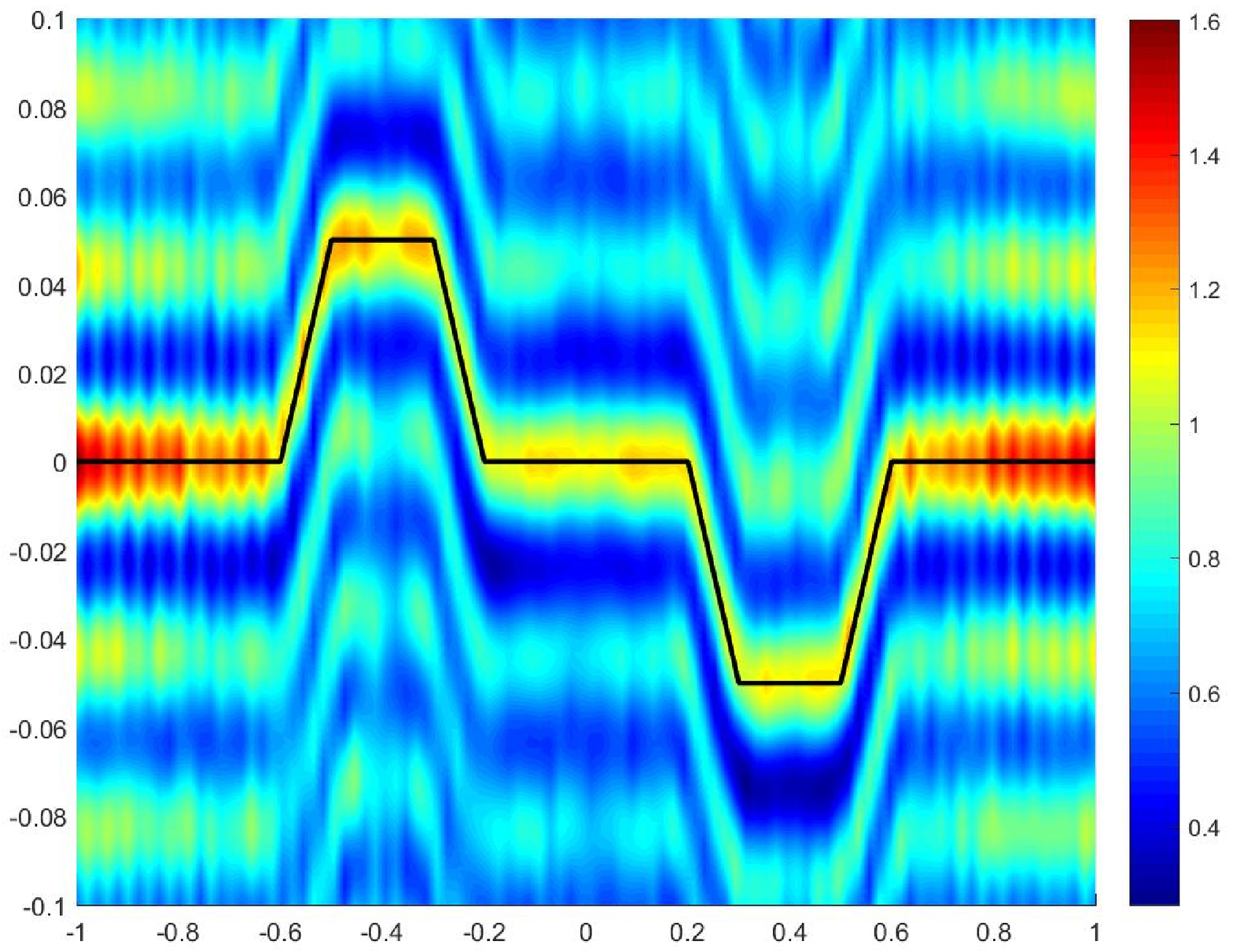}}
\subfigure[\textbf{20\% noise, k=80, R=1.6}]{\label{fig3-b}
\includegraphics[width=2in]{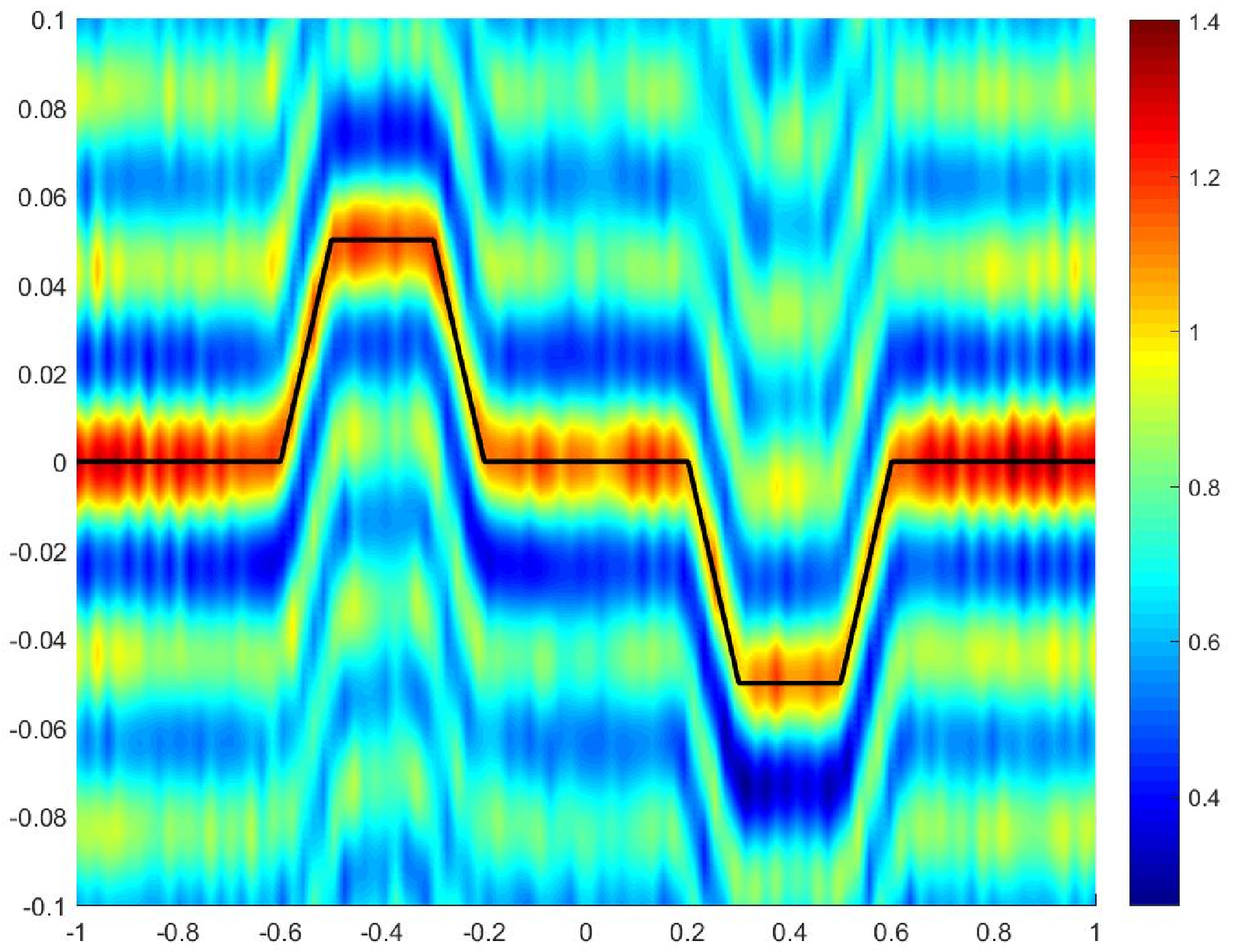}}
\subfigure[\textbf{20\% noise, k=80, R=2}]{\label{fig3-c}
\includegraphics[width=2in]{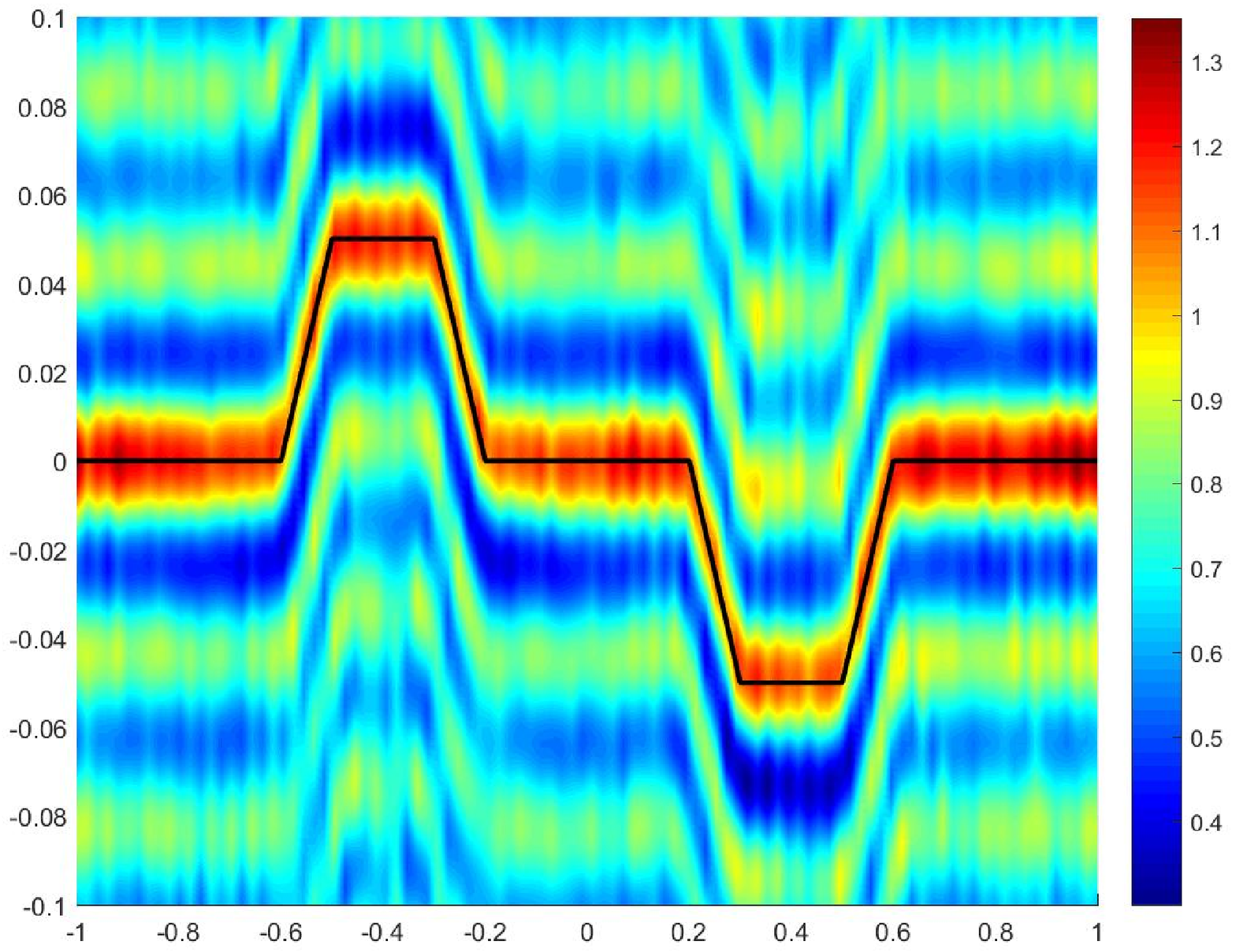}}
\subfigure[\textbf{20\% noise, k=80}]{\label{fig3-d}
\includegraphics[width=2in]{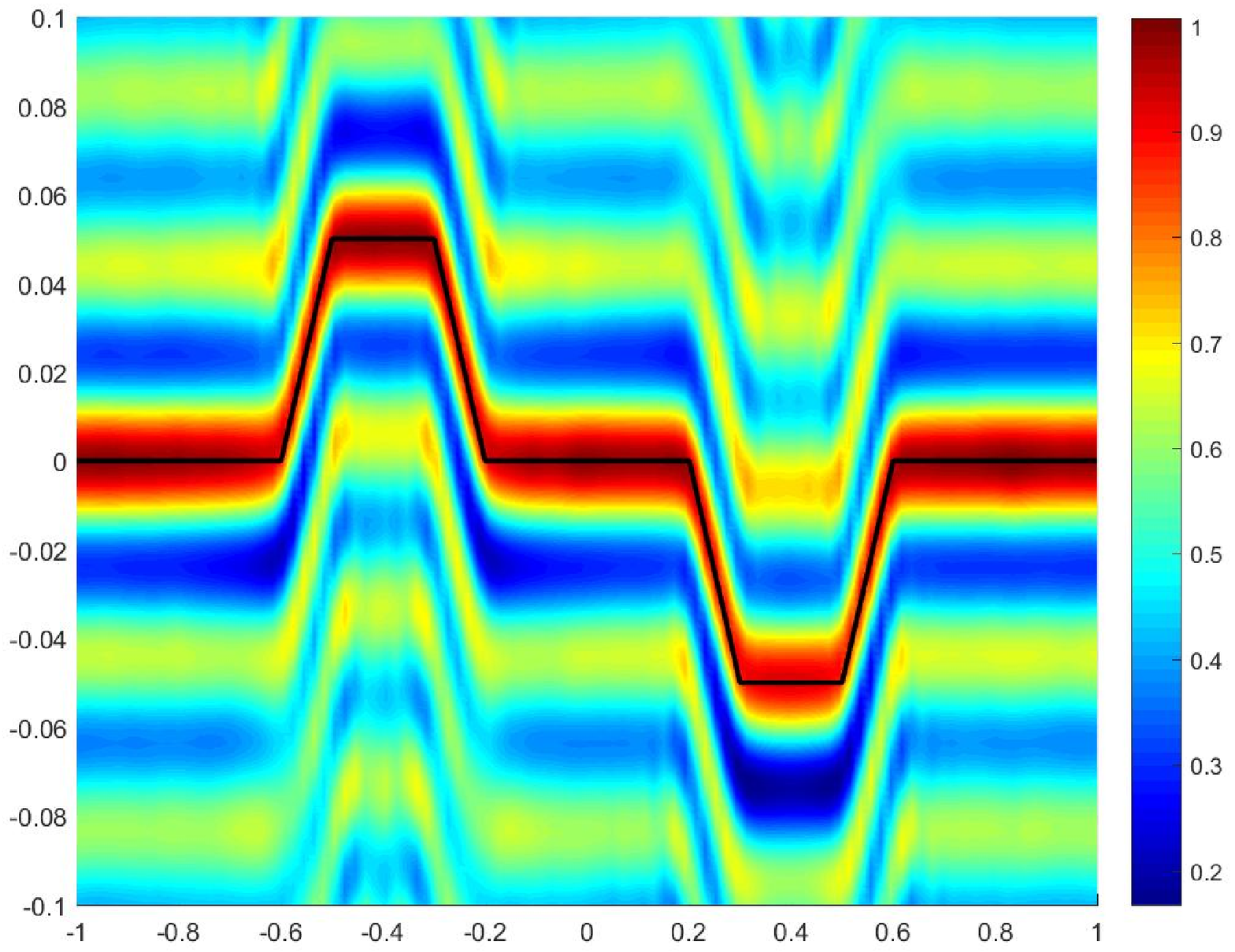}}
\caption{(a)-(c) show the imaging results of $I^{Phaseless}(z)$ with the measured phaseless near-field data,
where the number of the measurement points and the incident directions is chosen to be the same with $M=N=200$.
(d) shows the imaging result of $I^{Full}(z)$ with the measured full far-field data, where
the number of the measurement points and the incident directions is chosen to be the same with $L=N=100$.
The solid line '-' represents the actual curve.
}\label{fig3}
\end{figure}

\textbf{Example 3.} We now consider the case of a multi-scale surface profile given by
\ben
h(x_1)=\left\{\begin{array}{ll}
\ds 0.3\exp\left[16/(25x_1^2-16)\right]\left[0.5+0.1\sin(16\pi x_1)\right]\sin(\pi x_1), &|x_1|<4/5,\\
\ds 0, &|x_1|\geq4/5.
\end{array}\right.
\enn
This profile consists of a macro-scale represented by $0.15\exp[16/(25x_1^2-16)]\sin(\pi x_1)$
and a micro-scale represented by $0.03\exp[16/(25x_1^2-16)]\sin(16\pi x_1)\sin(\pi x_1)$.
We will investigate the effect of the wave number $k$ on the imaging results.
The noise ratio is chosen to be $\delta=20\%$.
We first consider the inverse problem with phaseless near-field data.
The radius of the measurement circle $\pa B^+_R$ is chosen to be $R=4$
and the number of the measurement points and the incident directions is set to be $M=N=400$.
Figure \ref{fig4} presents the imaging results of $I^{Phaseless}(z)$ with the measured
phaseless near-field data with the wave number $k=40,80,120$, respectively.
Second, we consider the inverse problem with full far-field data.
We choose the number of the measurement directions and the incident directions to be $L=N=100$.
Figure \ref{fig5} shows the imaging results of $I^{Full}(z)$ with the measurement
full far-field data with the wave number $k=40,80,120$, respectively.

\begin{figure}[htbp]
  \centering
  \subfigure[\textbf{20\% noise, k=40, R=4}]{
    \includegraphics[width=1.6in]{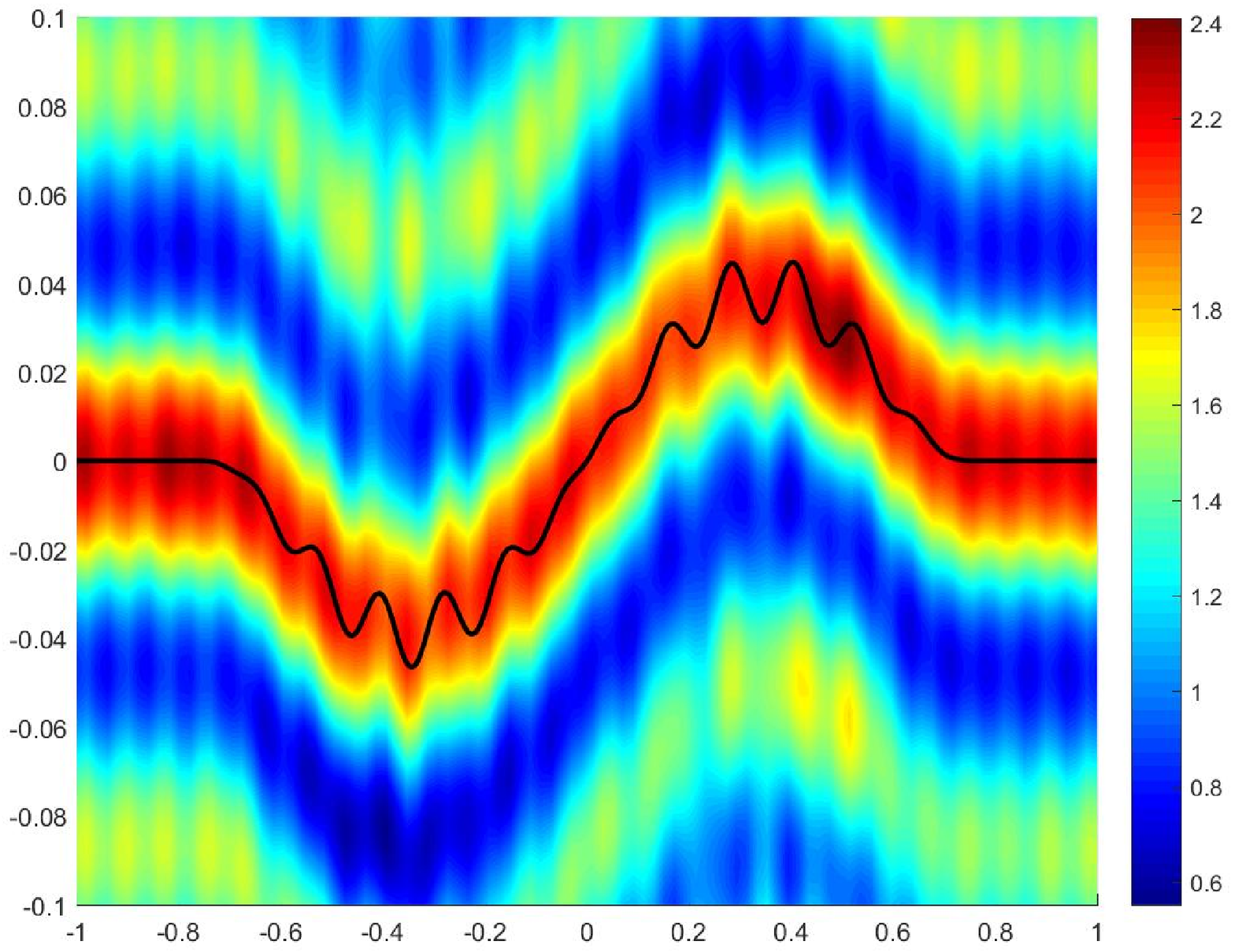}}
  \subfigure[\textbf{20\% noise, k=80, R=4}]{
    \includegraphics[width=1.6in]{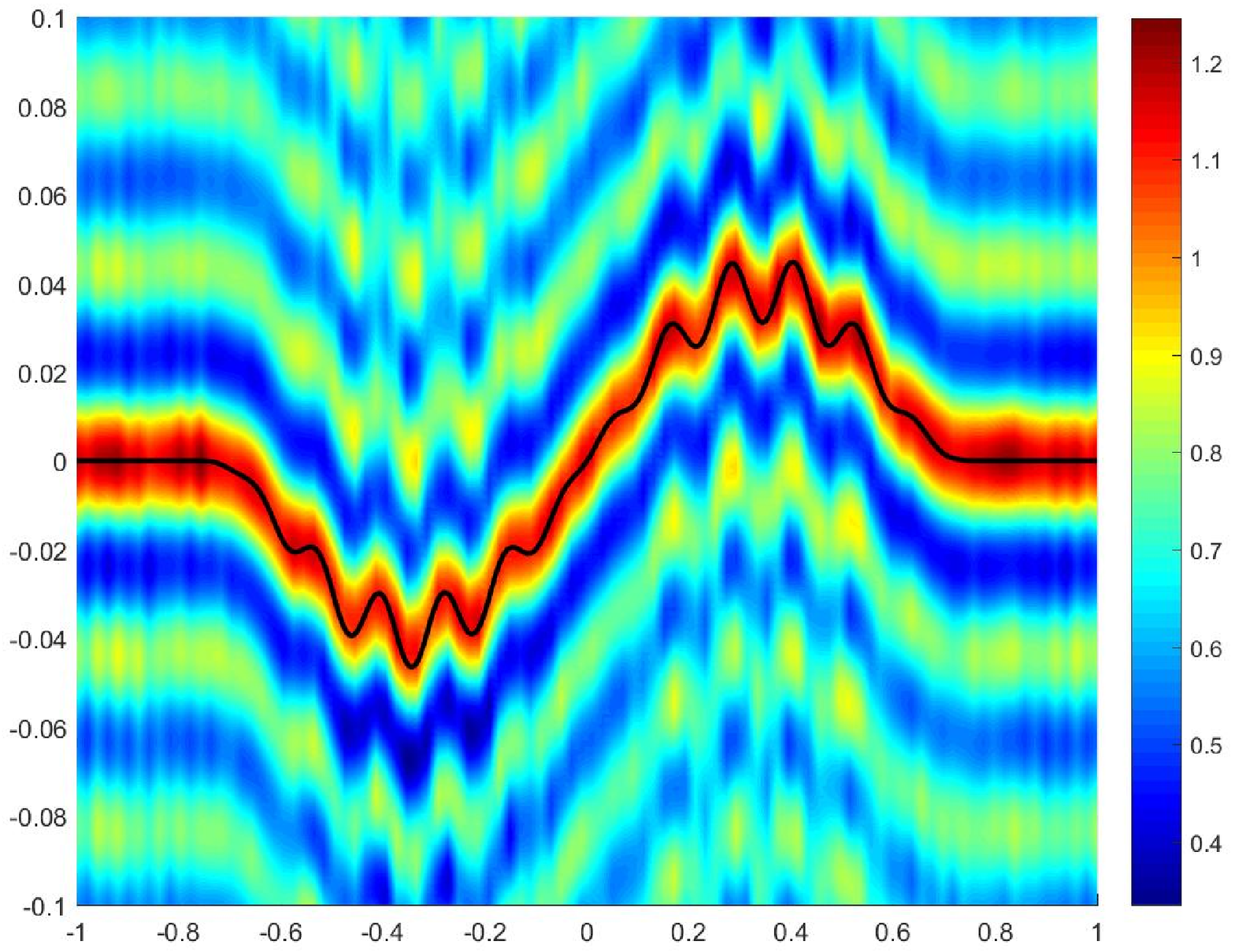}}
  \subfigure[\textbf{20\% noise, k=120, R=4}]{
    \includegraphics[width=1.6in]{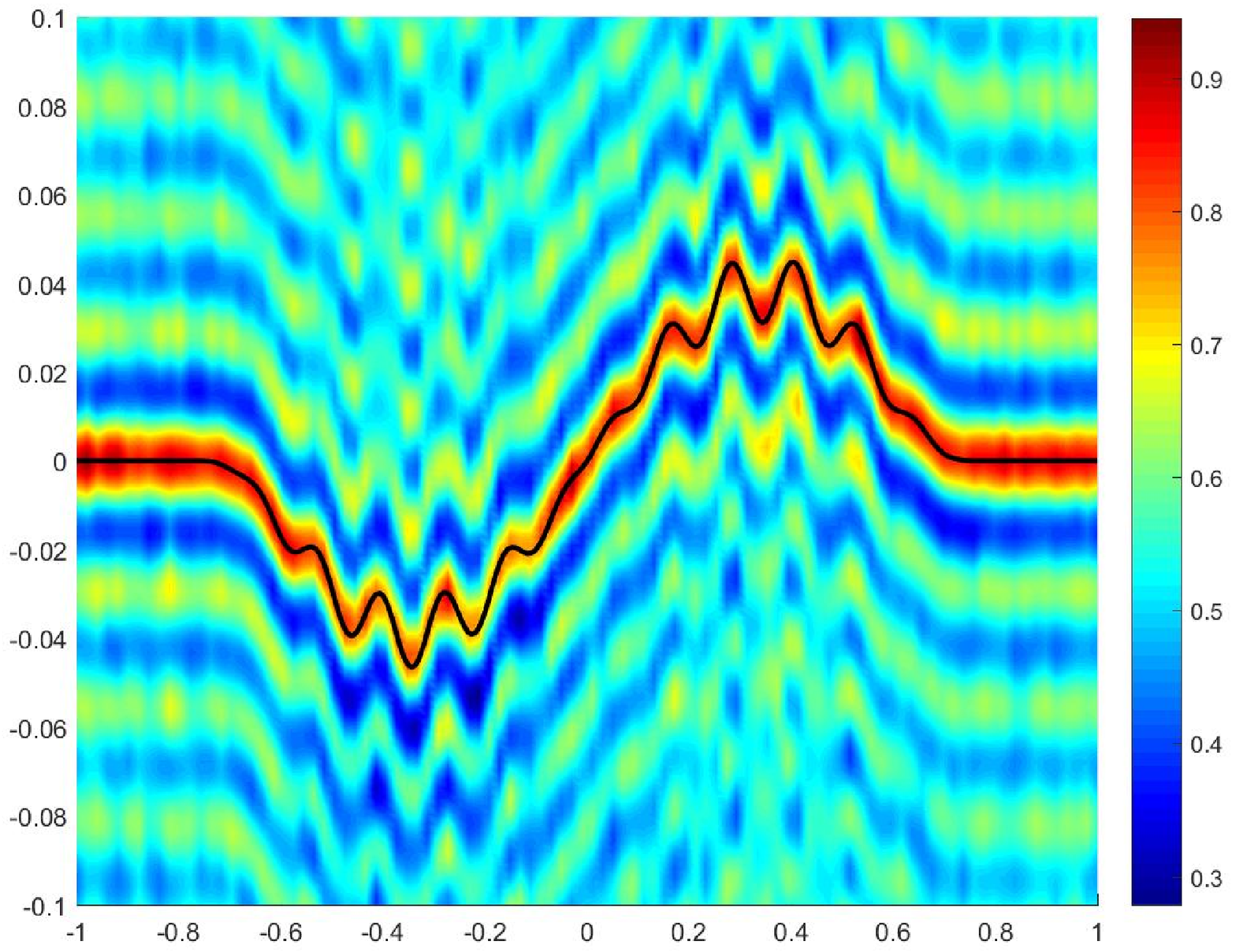}}
\caption{The imaging results of $I^{Phaseless}(z)$ with the measurement phaseless near-field data.
The number of the measurement points and the incident directions is chosen to be the same with $M=N=400$.
The solid line '-' represents the actual curve.
}\label{fig4}
\end{figure}

\begin{figure}[htbp]
  \centering
  \subfigure[\textbf{20\% noise, k=40}]{
    \includegraphics[width=1.6in]{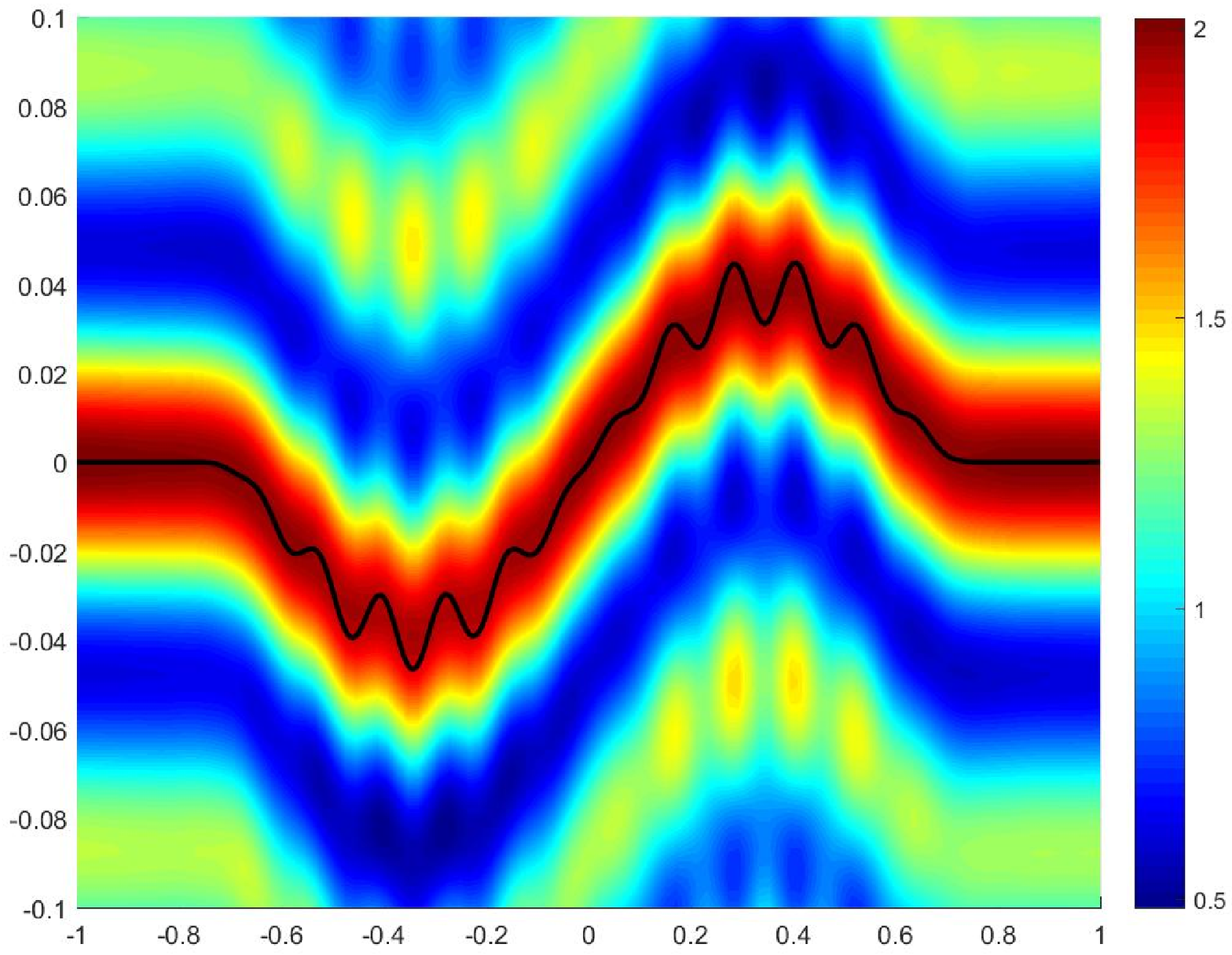}}
  \subfigure[\textbf{20\% noise, k=80}]{
    \includegraphics[width=1.6in]{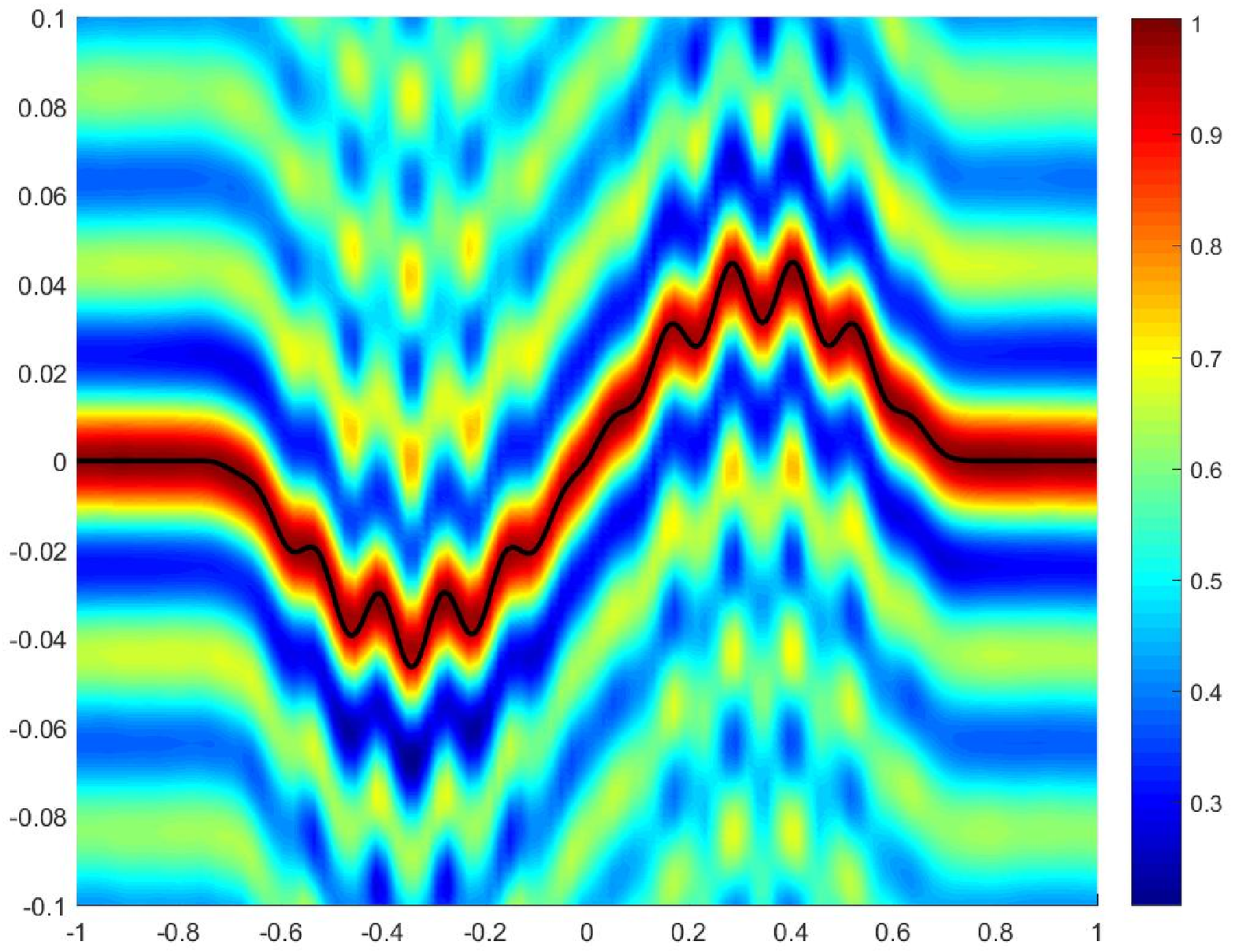}}
  \subfigure[\textbf{20\% noise, k=120}]{
    \includegraphics[width=1.6in]{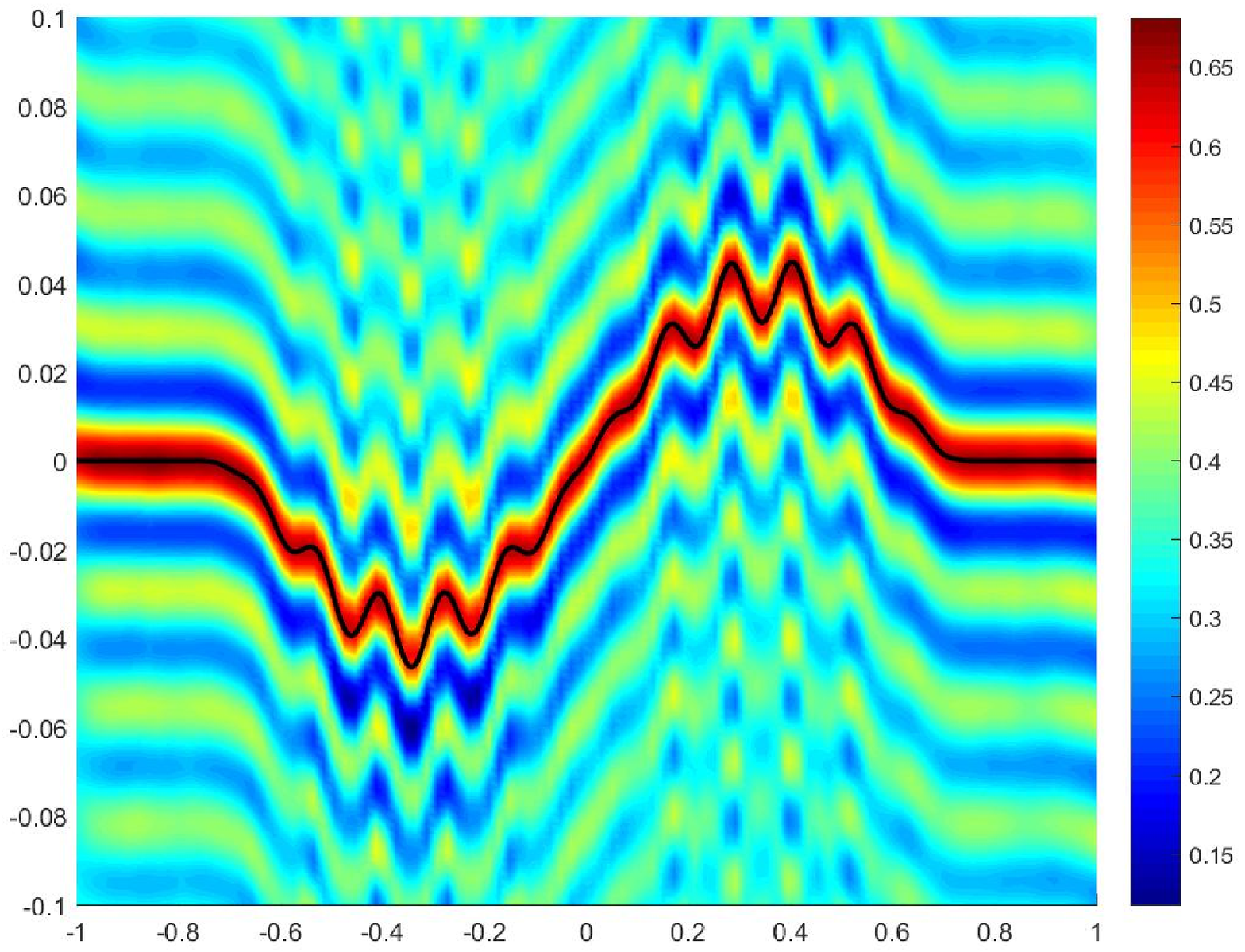}}
\caption{The imaging results of $I^{Full}(z)$ with the measurement full far-field data, where
the number of the measurement points and the incident directions is the same with $L=N=100$.
The solid line '-' represents the actual curve.
}\label{fig5}
\end{figure}

\section{Conclusion}\label{conclusion}

In this paper, we considered the inverse scattering problem by locally rough surfaces with
phaseless near-field data. We have proved that the locally rough surface is uniquely determined
by the phaseless near-field data, generated by a countably infinite number of incident plane waves
and measured on an open domain above the locally rough surface.
A direct imaging method has also been proposed to reconstruct the locally rough surface from
phaseless near-field data generated by incident plane waves and measured on the upper part of
a sufficiently large circle. The theoretical analysis of the imaging method has been given based
on the method of stationary phase and the property of the scattering solution.
As a by-product of the theoretical analysis, a similar direct imaging method with full far-field data
has also been given to reconstruct the locally rough surface and to compare with the imaging method
with phaseless near-field data. As an ongoing project, we are currently trying to extend the results
to the case of incident point sources. In the near future, we hope to consider the more challenging
case of electromagnetic waves.

\section*{Acknowledgements}
\addcontentsline{toc}{section}{Acknowledgements}

This work is partly supported by the NNSF of China grants 91630309, 11501558, 11571355 and 11871466.

\appendix
\renewcommand{\theequation}{\Alph{section}.\arabic{equation}}

\section{The method of stationary phase and proof of Lemma \ref{lem5}}
%\addcontentsline{toc}{section}{Appendix: The method of stationary phase and its application
%to the proof of Lemma \ref{lem5}}
\addcontentsline{toc}{section}{Appendix: The method of stationary phase and proof of Lemma \ref{lem5}}

\setcounter{section}{1}
\setcounter{equation}{0}

In \cite{F74}, the author developed an error theory for the method of stationary phase for integrals
of the form
\be\label{eq43}
I(\gamma)=\int^{b}_{a} e^{i\gamma p(t)}q(t)dt,
\en
where  $a,b\in\R$, $\g$ is a large real parameter, the function $p(t)$ is real and $q(t)$ is either real
or complex. In what follows, we will briefly present some useful results in \cite{F74} and use these
results to prove Lemma \ref{lem5}. For a comprehensive discussion of the method of stationary phase,
the reader is referred to \cite{E55,E56,F74,W01}.

\subsection{Error theory for the method of stationary phase}\label{se5}

We first present an error theory for the method of stationary phase given in \cite{F74}.
Let $a,b\in\R$, let $p(t)$ be a real function and let $q(t)$ be either a real or a complex function.
Assume that $a,b,p(t),q(t)$ are independent of the positive parameter $\g$.
They have the following properties.

(i) In $(a,b)$, $p^{(m+1)}$ and $q^{(m)}(t)$ are continuous, $m$ being a nonnegative integer,
and $p'(t)>0$.

(ii) As $t\rightarrow a$ from the right,
\be\label{eq50}
p(t)\thicksim p(a)+\sum^\infty\limits_{s=0}p_s(t-a)^{s+\mu},\quad
q(t)\thicksim\sum^\infty\limits_{s=0}q_s(t-a)^{s+\la-1},
\en
where the coefficients $p_0$ and $q_0$ are nonzero, and $\mu$ and $\la$ are constants satisfying that
\ben
\mu>0,\;(m+1)\mu+1>\Rt(\la)>0.
\enn
Moreover, the first of these expansions is differentiable $m+1$ times and the second $m$ times.

(iii) $p(b)\equiv\lim_{t\rightarrow b-}\{p(t)\}$ is finite, and each of the functions
\be\label{eq42}
P_s(t)\equiv\left\{\frac{1}{p'(t)}\frac{d}{dt}\right\}^s\frac{q(t)}{p'(t)},
\quad s=0,1,\cdots,m
\en
tends to a finite limit as $t\rightarrow b-$. In particular, (\ref{eq42}) is satisfied if $p^{(m+1)}(t)$
and $q^{(m)}(t)$ are continuous at $b$ and $p'(b)\neq 0$.

In consequence of condition (i) there is a one-to-one relationship between $t$ and the variable $v$,
defined by
\be\label{eq51}
v=p(t)-p(a).
\en
In terms of this variable the integral (\ref{eq43}) transforms into
\ben
\int^b_a e^{i\gamma p(t)}q(t)dt=e^{i\g p(a)}\int^{p(b)-p(a)}_0 e^{i\g v}f(v)dv,
\enn
in which
\be\label{eq52}
f(v)=q(t)/p'(t)=P_0(t).
\en
Again, condition (i) shows that $f(v)$ and its first $m$ derivatives are continuous when $0<v<p(b)-p(a)$.
For small $v$, $f(v)$ can be expanded in asymptotic series of the form
\be\label{eq53}
f(v)\thicksim \sum^\infty\limits_{s=0}a_s v^{(s+\lambda-\mu)/\mu}.
\en
The coefficients $a_s$ depend on $p_s$ and $q_s$, and may be found by standard procedures of
reverting series. In particular,
\be\label{eq54}
a_0=\frac{q_0}{\mu p^{\lambda/\mu}_0},\quad
a_1=\left\{\frac{q_1}{\mu}-\frac{(\lambda+1)p_1q_0}{\mu^2p_0}\right\}
\frac{1}{p^{(\lambda+1)/\mu}_0}.
\en

The following theorem gives an asymptotic expansion of the integral (\ref{eq43}) with an error bound
(see Theorem 1 and estimates (6.3) and (6.7) in \cite{F74}).

\begin{theorem}[Theorem 1 and estimates (6.3) and (6.7) in \cite{F74}]\label{thm-a}
Assume the conditions and notation of this section, and let $n$ be a nonnegative integer satisfying
\be\label{eq55}
m\mu-\lambda\leq n<(m+1)\mu-\lambda+1\quad (\lambda\;{\rm real}),
\en
or
\be\label{eq55+}
m\mu-\Rt(\lambda)< n<(m+1)\mu-\Rt(\la)+1\quad (\lambda\;{\rm complex}).
\en
If $p(b)<\infty$ then we have
\be\label{eq56}
\int^b_a e^{i\g p(t)}q(t)dt=e^{i\g p(a)}\sum^{n-\nu}\limits_{s=0}
\exp\left\{\frac{(s+\la)\pi i}{2\mu}\right\}\G\left(\frac{s+\la}{\mu}\right)\frac{a_s}{\g^{(s+\la)/\mu}}\no\\
-e^{i\g p(b)}\sum^{m-1}\limits_{s=0}P_s(b)\left(\frac{i}{\g}\right)^{s+1}+\delta_{m,n}(\g)-\vep_{m,n}(\g).
\en
Here, $\nu=0$ when $n=m\mu-\la$, and $\nu=1$ in all other cases. As usual, empty sums are understood to
be zero. Further, the error terms $\delta_{m,n}$ and $\vep_{m,n}$ satisfy
\be\label{eq57}
|\delta_{m,n}(\g)|\le\left[|Q_{m+1,n}(a)|+|Q_{m+1,n}(b)|+\mathscr{V}_{a,b}\{Q_{m+1,n}(t)\}\right]\g^{-m-1}
\en
provided that the right-hand side is finite, and
\be\label{eq62}
|\vep_{m,n}(\g)|\leq\frac{2}{\g^{m+1}}\sum^{n-1}\limits_{s=0}\frac{\G\{(s+\la)/\mu\}}{|\G\{(s+\la-m\mu)/\mu\}|}
\frac{|a_s|}{\{p(b)-p(a)\}^{(m\mu+\mu-s-\la)/\mu}}.\;\;
\en
Here, $Q_{m+1,n}$ is given by
\be\label{eq58}
&&Q_{m+1,n}(t)\no\\
&&=P_{m}(t)-\sum^{n-1}\limits_{s=0}\frac{\G\{(s+\la)/\mu\}}{\G\{[s+\la+\mu-(m+1)\mu]/\mu\}}
\frac{a_s}{\{p(t)-p(a)\}^{[(m+1)\mu-s-\la]/\mu}},\quad
\en
and $\mathscr{V}_{a,b}\{Q_{m+1,n}(t)\}$ denotes the total variation of
the function $Q_{m+1,n}(t)$ which is given by
\ben
\mathscr{V}_{a,b}\{Q_{m+1,n}(t)\}=\int^b_a|Q'_{m+1,n}(t)|dt.
\enn
\end{theorem}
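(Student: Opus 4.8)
The plan is to follow Olver's argument in \cite{F74}: reduce (\ref{eq43}) to a canonical one-sided oscillatory integral, split the amplitude into its algebraic expansion at the stationary point plus a remainder, evaluate the principal (stationary-point) contribution exactly, and capture the upper-endpoint contribution together with two explicitly bounded error terms by repeated integration by parts. First I would make the strictly monotone substitution $v=p(t)-p(a)$ from (\ref{eq51}), legitimate because condition (i) gives $p'(t)>0$; this turns the integral into $e^{i\g p(a)}\int_0^{k}e^{i\g v}f(v)\,dv$ with $k=p(b)-p(a)<\infty$ and $f(v)=q(t)/p'(t)=P_0(t)$ as in (\ref{eq52}). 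The structural point is that $d/dv$ corresponds to the operator $\{(1/p')\,d/dt\}$ in $t$, so the $v$-derivatives of $f$ are exactly the functions $P_s(t)$ of (\ref{eq42}); condition (iii) is precisely the assertion that these possess finite limits $P_s(b)$ as $t\to b-$, which is what makes integration by parts at the upper endpoint legitimate. Near $v=0$ the amplitude has the algebraic expansion (\ref{eq53}), with coefficients $a_s$ obtained from $p_s,q_s$ by reversion of series, the first two being (\ref{eq54}).

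Next I would subtract the leading $n$ terms and write $\int_0^k e^{i\g v}f\,dv$ as the extended principal integral $\int_0^\infty e^{i\g v}\sum_{s=0}^{n-1}a_s v^{(s+\la-\mu)/\mu}\,dv$, minus its tail $\int_k^\infty$, plus the remainder integral $\int_0^k e^{i\g v}\big(f(v)-\sum_{s=0}^{n-1}a_s v^{(s+\la-\mu)/\mu}\big)\,dv$. For the extended principal integral I use the identity $\int_0^\infty e^{i\g v}v^{\rho}\,dv=e^{i\pi(\rho+1)/2}\G(\rho+1)\g^{-(\rho+1)}$, valid for $-1<\Rt(\rho)<0$ and continued analytically to the retained exponents; with $\rho+1=(s+\la)/\mu$ this reproduces exactly the first sum in (\ref{eq56}), while the admissibility range (\ref{eq55})--(\ref{eq55+}) for $n$ and the index $\nu$ are what keep every retained exponent consistent with the smoothness order $m$. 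Integrating the tail $\int_k^\infty$ by parts $m$ times, and likewise the remainder integral, contributes the upper-endpoint boundary terms, which reassemble (since $P_s(b)=f^{(s)}(k)$) into the series $-e^{i\g p(b)}\sum_{s=0}^{m-1}P_s(b)(i/\g)^{s+1}$; the tail-truncation part of this step is the error $\vep_{m,n}$, whose explicit bound (\ref{eq62}) follows by estimating the $\int_k^\infty$ monomial contributions one by one with the Gamma-function constants appearing there.

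The remaining remainder integral carries the error $\delta_{m,n}$, and controlling it sharply is where the real work lies. Here I would integrate by parts $m+1$ times, each step yielding a factor $1/(i\g)$ and boundary contributions handled by the finite limits from condition (iii), organised so that the leftover is $\g^{-(m+1)}$ times a single oscillatory integral of the form $\int e^{i\g v}\,dQ_{m+1,n}$, where $Q_{m+1,n}$ is the auxiliary function (\ref{eq58}). This function is built so that its subtracted singular tails exactly cancel the nonintegrable behaviour of $P_m$ at the origin, leaving $Q_{m+1,n}$ of bounded variation on $(a,b)$; the basic estimate that $|\int e^{i\g v}\,dQ_{m+1,n}|$ is at most $\mathscr{V}_{a,b}\{Q_{m+1,n}\}$ plus the boundary contributions $|Q_{m+1,n}(a)|$ and $|Q_{m+1,n}(b)|$ then gives (\ref{eq57}). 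I expect this last construction to be the main obstacle: verifying under conditions (i)--(iii) alone that $Q_{m+1,n}$ has finite total variation and integrable derivative, with no residual singularity surviving at $v=0$, is the technical heart of the theory and the reason the bounds are genuinely realistic rather than merely of the correct asymptotic order.
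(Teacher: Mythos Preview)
The paper does not prove this theorem at all: it is quoted verbatim as Theorem~1 and estimates~(6.3) and~(6.7) of Olver~\cite{F74} and then used as a black box in the proof of Lemma~\ref{lem5}. There is therefore no ``paper's own proof'' to compare against. Your sketch is a faithful outline of Olver's original argument --- the monotone substitution $v=p(t)-p(a)$, identification of $f^{(s)}$ with $P_s$, subtraction of the $n$-term algebraic expansion, exact evaluation of the extended principal integrals via the Gamma identity, and the $(m+1)$-fold integration by parts producing the total-variation bound in terms of $Q_{m+1,n}$ --- and is correct at the level of detail you give; the only caveat is that the delicate point you flag at the end (finiteness of $Q_{m+1,n}(a)$ and of the total variation near $t=a$) is precisely what the choice of $n$ in (\ref{eq55})--(\ref{eq55+}) is calibrated to guarantee, so that step is not an open obstacle but the place where the hypothesis on $n$ is actually consumed.
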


\subsection{Proof of Lemma \ref{lem5}}

In this section, we prove Lemma \ref{lem5}, employing Theorem \ref{thm-a}.
To do this, we need to estimate the function $U_i,$ $i=1,2,3,$ defined in (\ref{eq44})-(\ref{eq46}).

\begin{lemma}\label{lem7}
%Let $x\in\ov{D_+}$ and $z\in\R^2$, we have
Let $x\in{D_+}$ and $z\in\R^2$. Then we have
\be\label{eq93}
U_1(x,z)=\frac{e^{ik|x|}}{|x|^{1/2}}\int_{\Sp^1_-}u^\infty(\hat{x},d)e^{-ikz\cdot d}ds(d)+U_{1,Res}(x,z)
\en
with
\be\label{eq95}
|U_{1,Res}(x,z)|\le\frac{C}{|x|^{3/2}}\quad\mbox{as}\;\;|x|\rightarrow+\infty,
\en
where $C>0$ is a constant independent of $x$ and $z$.
\end{lemma}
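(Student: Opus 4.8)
The plan is to obtain Lemma \ref{lem7} as an immediate consequence of the pointwise asymptotic expansion of the scattered field recorded in Lemma \ref{le3}, by inserting that expansion into the definition of $U_1$ and integrating in $d$ over $\Sp^1_-$.

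First I would recall that, by definition (\ref{eq44}), $U_1(x,z)=\int_{\Sp^1_-}u^s(x,d)e^{-ikz\cdot d}\,ds(d)$. For $|x|$ large enough, Lemma \ref{le3} gives the representation $u^s(x,d)=\frac{e^{ik|x|}}{|x|^{1/2}}u^\infty(\hat{x},d)+u^s_{Res}(x,d)$, valid uniformly for $d\in\Sp^1_-$. Substituting this into the integral and noting that the prefactor $e^{ik|x|}/|x|^{1/2}$ is independent of $d$, I can pull it outside the integral; this produces exactly the leading term $\frac{e^{ik|x|}}{|x|^{1/2}}\int_{\Sp^1_-}u^\infty(\hat{x},d)e^{-ikz\cdot d}\,ds(d)$ in (\ref{eq93}), and leaves the remainder $U_{1,Res}(x,z):=\int_{\Sp^1_-}u^s_{Res}(x,d)e^{-ikz\cdot d}\,ds(d)$.

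It then remains to prove the bound (\ref{eq95}) for $U_{1,Res}$. The key point is that $z\in\R^2$ and $d\in\Sp^1_-$ are real, so $z\cdot d\in\R$ and hence $|e^{-ikz\cdot d}|=1$ for every choice of $z$ and $d$; this is precisely what makes the resulting constant independent of $z$. Consequently $|U_{1,Res}(x,z)|\le\int_{\Sp^1_-}|u^s_{Res}(x,d)|\,ds(d)$, and the estimate $|u^s_{Res}(x,d)|\le C|x|^{-3/2}$ from Lemma \ref{le3} (with $C$ independent of $x$ and $d$), together with the finiteness of the arclength of $\Sp^1_-$, yields $|U_{1,Res}(x,z)|\le \pi C|x|^{-3/2}$ as $|x|\to+\infty$, which is (\ref{eq95}).

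I do not expect any genuine obstacle in this lemma: the argument is a one-line substitution followed by a trivial modulus estimate. The only two points that need to be stated with care are the uniformity in $d$ of the error bound, which is already built into Lemma \ref{le3}, and the independence of the final constant from $z$, which follows from $|e^{-ikz\cdot d}|=1$.
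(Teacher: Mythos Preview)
Your proposal is correct and follows essentially the same approach as the paper: substitute the asymptotic expansion of $u^s(x,d)$ from Lemma \ref{le3} into the definition of $U_1$, integrate over $\Sp^1_-$, and bound the remainder using $|e^{-ikz\cdot d}|=1$ together with (\ref{eq94}). Your write-up is in fact slightly more explicit than the paper's about why the constant is independent of $z$.
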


\begin{proof}
Multiply (\ref{eq92}) by $e^{-ik z\cdot d}$ and integrate with respect to $d$ over $\Sp^1_-$ to obtain
(\ref{eq93}) with $U_{1,Res}$ being given by
\ben
U_{1,Res}(x,z):=\int_{\Sp^1_-}u^s_{Res}(x,d)e^{-ik z\cdot d}ds(d).
\enn
The inequality (\ref{eq95}) then follows from (\ref{eq94}). The lemma is thus proved.
\end{proof}

\begin{lemma}\label{lem6}
Let $x\in \R^2_+$ and $z\in\R^2$. Write $\hat{x}=x/|x|=(\cos\theta_{\hat{x}},\sin\theta_{\hat{x}})$
with $\theta_{\hat{x}}\in(0,\pi)$. Then we have that for $\theta_{\hat{x}}\in(0,\pi)$,
\be\label{eq47}
U_2(x,z)&=&-\frac{e^{ik|x|}}{|x|^{1/2}}e^{-\frac{\pi}{4}i}\left(\frac{2\pi}{k}\right)^{1/2}e^{-ik\hat{x}\cdot z'}
+U_{2,Res}(x,z),\\ \label{eq49}
U_3(x,z)&=&-\frac{e^{ik|x|}}{|x|^{1/2}}e^{-\frac{\pi}{4}i}\left(\frac{2\pi}{k}\right)^{1/2}e^{-ik\hat{x}\cdot z}
+U_{3,Res}(x,z),
\en
where
\be\label{eq48}
|U_{j,Res}(x,z)|&\le& C\Bigg(|z|+\frac{1}{|\sin\frac{\theta_{\hat{x}}}{2}|}
+\frac{1}{|\sin\frac{\pi-\theta_{\hat{x}}}{2}|}+\frac{1}{|\sin{\theta_{\hat{x}}}|}
+\int^{\theta_{\hat{x}}}_0\frac{(1+|z|)^3 t^2}{\sin^2 t}dt\no\\
&&\qquad\;\; +\int^{\pi-\theta_{\hat{x}}}_0\frac{(1+|z|)^3 t^2}{\sin^2 t}dt\Bigg)\frac{1}{|x|},\;\;j=2,3
\en
for large $|x|$. Here, $C>0$ is a constant independent of $x$ and $z$.
\end{lemma}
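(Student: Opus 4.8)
The plan is to evaluate the oscillatory integrals defining $U_2(x,z)$ and $U_3(x,z)$ from \eqref{eq45}--\eqref{eq46} by the stationary phase machinery collected in Theorem \ref{thm-a}, parametrizing $\Sp^1_-$ by the angle $\theta_d\in[\pi,2\pi]$. Writing $x=|x|(\cos\theta_{\hat x},\sin\theta_{\hat x})$ and $d=(\cos\theta_d,\sin\theta_d)$, we have $x\cdot d' = |x|\cos(\theta_d+\theta_{\hat x})$, so that
\ben
U_3(x,z)=-\int_\pi^{2\pi}e^{ik|x|\cos(\theta_d+\theta_{\hat x})}e^{-ikz\cdot d}\,d\theta_d,
\enn
and similarly for $U_2$ with $z$ replaced by $z'$ in the amplitude. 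The phase $p(\theta_d)=\cos(\theta_d+\theta_{\hat x})$ has, for $\theta_{\hat x}\in(0,\pi)$, a single nondegenerate stationary point in the open interval $(\pi,2\pi)$, namely $\theta_d^\ast=2\pi-\theta_{\hat x}$ (resp. $\theta_d^\ast = \pi-\theta_{\hat x}+\pi$), at which $d=\hat x$; indeed $p'(\theta_d)=-\sin(\theta_d+\theta_{\hat x})$ vanishes exactly there and $p''(\theta_d^\ast)=\pm1\neq 0$. Evaluating the amplitude $e^{-ikz\cdot d}$ at this point gives $e^{-ik\hat x\cdot z}$ (resp. $e^{-ik\hat x\cdot z'}$ for $U_2$), the factor $e^{ik|x|}$ comes from $e^{ik|x|p(\theta_d^\ast)}=e^{ik|x|}$, and the classical stationary phase constant $(2\pi/k)^{1/2}e^{-\pi i/4}$ (with $\g=k|x|$, $|p''|=1$) produces exactly the leading terms displayed in \eqref{eq47} and \eqref{eq49}.

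The substance of the proof is the remainder estimate \eqref{eq48}. I would split the integration interval at the stationary point into two subintervals, on each of which $p'$ has one sign and $p$ is strictly monotone, so that condition (i) of Section~\ref{se5} holds and the change of variable $v=p(\theta_d)-p(a)$ of \eqref{eq51} applies. At an endpoint of a subinterval that is the stationary point $\theta_d^\ast$, the phase behaves like a quadratic, so $\mu=2$ in the local expansion \eqref{eq50}, while at a genuine endpoint of $(\pi,2\pi)$ one has $p'\neq0$ and the expansion is the ordinary one with the endpoint (Poisson-type) contributions governed by the $P_s(b)$ terms in \eqref{eq56}. Taking $m=1$ and the appropriate $n$ from \eqref{eq55}, Theorem \ref{thm-a} yields an asymptotic expansion with the leading stationary-phase term as above plus two explicit endpoint terms of order $(k|x|)^{-1}$ and an error $\delta_{m,n}-\vep_{m,n}$ of order $(k|x|)^{-1}$, where the constants are controlled by $|a_0|$, $|a_1|$, the values $P_0(b),P_1(b)$ and the total variation $\mathscr V_{a,b}\{Q_{2,n}\}$ from \eqref{eq57}--\eqref{eq58}. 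The point of the somewhat unusual form of the bound \eqref{eq48} is simply to make explicit how these quantities depend on $z$ and on $\theta_{\hat x}$: differentiating the amplitude $e^{-ikz\cdot d}$ in $\theta_d$ brings down powers of $(1+|z|)$ (up to the third power after the two derivatives hidden in $Q_{2,n}$ and its variation), whereas the factors $1/|\sin(\theta_{\hat x}/2)|$, $1/|\sin((\pi-\theta_{\hat x})/2)|$, $1/|\sin\theta_{\hat x}|$ and the integrals $\int_0^{\theta_{\hat x}}(1+|z|)^3 t^2/\sin^2 t\,dt$, $\int_0^{\pi-\theta_{\hat x}}(1+|z|)^3 t^2/\sin^2 t\,dt$ record, respectively, the size of the endpoint data $P_s(b)$ (which blow up as the genuine endpoint approaches the stationary point, i.e. as $\theta_{\hat x}\to0$ or $\theta_{\hat x}\to\pi$) and the total variation of $Q_{2,n}$ over each subinterval.

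The main obstacle will be the bookkeeping in the remainder: one must verify that the hypotheses (i)--(iii) of Section~\ref{se5} are met on each subinterval (in particular checking that $P_0,P_1$ have finite limits at the genuine endpoints, which fails only in the limiting configurations $\theta_{\hat x}\in\{0,\pi\}$ that are excluded), and then carefully track how $a_0,a_1$ from \eqref{eq54}, the endpoint values $P_s(b)$, and the variation integral in \eqref{eq57}--\eqref{eq58} each depend on $\theta_{\hat x}$ and on $z$. The dependence on $z$ is uniform and harmless — each $\theta_d$-derivative of the amplitude costs a factor $k|z|\le C(1+|z|)$ — so the real care is in extracting the $\theta_{\hat x}$-dependent factors; the $1/\sin$-type singular factors and the $t^2/\sin^2 t$ integrands arise precisely from bounding $|p'|^{-1}$ and its derivatives near the stationary point when it sits close to an endpoint. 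Once these dependences are made explicit, collecting the leading term, the two endpoint terms, and the error term of Theorem~\ref{thm-a} and absorbing everything of size $O(|x|^{-1})$ into $U_{j,Res}$ gives \eqref{eq47}--\eqref{eq49} together with the bound \eqref{eq48}, completing the proof; the proof for $U_2$ is identical upon replacing $z$ by $z'$.
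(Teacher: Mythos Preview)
Your overall strategy --- split the integral at the unique stationary point $\theta_d^\ast=2\pi-\theta_{\hat x}$ and apply Olver's Theorem~\ref{thm-a} on each half --- is exactly what the paper does. The discrepancy is in the parameter choice and hence in the bookkeeping. The paper first passes to $-\overline{U_2(x,z)}$ (so that after the shift $t=0$ sits at the stationary point the phase $p(t)=-k\cos t$ satisfies $p'>0$, as required by condition~(i)), and then applies Theorem~\ref{thm-a} with $m=0$, $n=1$ --- not $m=1$. With $m=0$ the endpoint sum $\sum_{s=0}^{m-1}P_s(b)(i/\g)^{s+1}$ in \eqref{eq56} is empty, so every $O(|x|^{-1})$ contribution lives inside $\delta_{0,1}$ and $\vep_{0,1}$; your claim that $\delta_{m,n}-\vep_{m,n}$ is of order $(k|x|)^{-1}$ is consistent with $m=0$ but not with your stated $m=1$, where \eqref{eq57} gives $O(|x|^{-2})$.

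The precise shape of \eqref{eq48} then falls out as follows. On the subinterval $[0,\pi-\theta_{\hat x}]$ one has $Q_{1,1}(t)=P_0(t)-a_0\{p(t)-p(a)\}^{-1/2}$; the endpoint values give $|Q_{1,1}(0)|\le C|z|$ (the lone $|z|$ term) and $|Q_{1,1}(\pi-\theta_{\hat x})|\le C/|\sin\theta_{\hat x}|$. The factors $1/|\sin((\pi-\theta_{\hat x})/2)|$ and $1/|\sin(\theta_{\hat x}/2)|$ do not come from any $P_s(b)$ but from the factor $\{p(b)-p(a)\}^{-1/2}$ in the bound \eqref{eq62} for $\vep_{0,1}$ on the two subintervals (since $p(b)-p(a)=2k\cos^2(\theta_{\hat x}/2)$ and $2k\sin^2(\theta_{\hat x}/2)$ respectively). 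Finally --- and this is the computational step your sketch misses --- the integrals $\int_0^{\pi-\theta_{\hat x}}(1+|z|)^3 t^2/\sin^2 t\,dt$ arise by writing $Q_{1,1}'(t)=h(t)/(k\sin^2 t)$, verifying directly that $h(0)=h'(0)=0$ while $|h''(t)|\le C(1+|z|)^3$, and using the second-order Taylor remainder for $h$ at $0$. That is the true source both of the cube $(1+|z|)^3$ and of the factor $t^2$; it is not a matter of two derivatives hidden in a $Q_{2,n}$.
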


\begin{proof}
We only consider the case for $U_2(x,z)$. The case for $U_3(x,z)$ can be proved similarly.

For $d\in\Sp^1_-$ and $z\in\R^2$, let $\theta_d,\theta_{\hat{z}}$ be the real numbers as defined at
the end of Section \ref{sec1}. Then we have
\ben
U_2(x,z)=-\int_{\Sp^1_-}e^{ik(x\cdot d'-z\cdot d)}ds(d)
=-\int^{2\pi}_\pi e^{ik|x|\cos(\theta_d+\theta_{\hat{x}})}e^{-ik|z|\cos(\theta_d-\theta_{\hat{z}})}d\theta_d.
\enn
A straightforward calculation gives
\be\label{eq66}
-\ov{U_2(x,z)}&=&\int^{2\pi-\theta_{\hat{x}}}_\pi e^{-ik|x|\cos(\theta_d+\theta_{\hat{x}})}
   e^{ik|z|\cos(\theta_d-\theta_{\hat{z}})}d\theta_d\no\\
&&+\int^{2\pi}_{2\pi-\theta_{\hat{x}}}e^{-ik|x|\cos(\theta_d+\theta_{\hat{x}})}
   e^{ik|z|\cos(\theta_d-\theta_{\hat{z}})}d\theta_d\no\\
&=&\int^{\pi-\theta_{\hat{x}}}_0 e^{-ik|x|\cos t}e^{ik|z|\cos(t+\theta_{\hat{x}}+\theta_{\hat{z}})}dt
   +\int^{\theta_{\hat{x}}}_0 e^{-ik|x|\cos t}e^{ik|z|\cos(t-\theta_{\hat{x}}-\theta_{\hat{z}})}dt\no\\
&:=&I_1(x,z)+I_2(x,z).
\en

We first estimate $I_1(x,z)$. Let $\g=|x|,a=0,b=\pi-\theta_{\hat{x}},$
$p(t)=-k\cos t$ and $q(t)=e^{ik|z|\cos(t+\theta_{\hat{x}}+\theta_{\hat{z}})}$.
Then it is easy to verify that $a,b,p(t),q(t)$ satisfy the assumptions in Section \ref{se5}.
In particular, $p(t),q(t)$ satisfy Assumption (\ref{eq50}) with $\mu=2,\la=1,p_0=k/2$ and
$q_0=e^{ik|z|\cos(\theta_{\hat{x}}+\theta_{\hat{z}})}$, and the function $P_0$ defined in (\ref{eq42})
is given by
\be\label{eq59}
P_0(t)=q(t)/p'(t)=e^{ik|z|\cos(t+\theta_{\hat{x}}+\theta_{\hat{z}})}/(k\sin t).
\en
Let the relationship between $t$ and $v$ be given by (\ref{eq51}) and let $f(v)$ be the function
defined in (\ref{eq52}). Then $f(v)$ has the form (\ref{eq53}). In particular,
it follows from (\ref{eq54}) that the coefficient
$a_0=q_0/(\mu p^{\la/\mu}_0)=e^{ik|z|\cos(\theta_{\hat{x}}+\theta_{\hat{z}})}/(2k)^{1/2}$.

Choose $m=0,n=1$. Then $m,n$ satisfy the condition (\ref{eq55}). Thus it follows from (\ref{eq56}) that
\be\label{eq63}
I_1(x,z)&=&\int^b_a e^{i|x|p(t)}q(t)dt=e^{i|x|p(0)}exp\left\{\frac{\la\pi i}{2\mu}\right\}\G
\left(\frac{\la}{\mu}\right)\frac{a_0}{|x|^{\la/\mu}}+\delta_{0,1}(|x|)-\vep_{0,1}(|x|)\no\\
&=&\frac{e^{-ik|x|}}{|x|^{1/2}}e^{\pi i/4}\left(\frac{\pi}{2k}\right)^{1/2}
e^{ik|z|\cos(\theta_{\hat{x}}+\theta_{\hat{z}})}+\delta_{0,1}(|x|)-\vep_{0,1}(|x|)\no\\
&=&\frac{e^{-ik|x|}}{|x|^{1/2}}e^{\pi i/4}\left(\frac{\pi}{2k}\right)^{1/2}
e^{ik\hat{x}\cdot z'}+\delta_{0,1}(|x|)-\vep_{0,1}(|x|).
\en
Further, by (\ref{eq58}) and (\ref{eq59}) we have
\ben
Q_{1,1}(t)&=&P_0(t)-\frac{a_0}{(p(t)-p(a))^{1/2}}\\
&=&\frac{1}{k\sin t}\left( e^{ik|z|\cos(t+\theta_{\hat{x}}+\theta_{\hat{z}})}
-e^{ik|z|\cos(\theta_{\hat{x}}+\theta_{\hat{z}})}\cos\frac{t}{2}\right)
\enn
for $t\in(0,\pi-\theta_{\hat{x}})$. It is easy to see that
\be\label{eq60}
|Q_{1,1}(0)|\leq|z|,\quad
|Q_{1,1}(\pi-\theta_{\hat{x}})|\leq\frac{2}{k|\sin(\pi-\theta_{\hat{x}})|}
=\frac{2}{k|\sin\theta_{\hat{x}}|}
\en
and
\ben
Q'_{1,1}(t)&=&\frac{\left(\frac{1}{2}\sin\frac{t}{2}\sin t+\cos\frac{t}{2}\cos t\right)
e^{ik|z|\cos(\theta_{\hat{x}}+\theta_{\hat{z}})}}{k\sin^2 t}\no\\
&&\quad-\frac{\left(ik|z|\sin(t+\theta_{\hat{x}}+\theta_{\hat{z}})\sin t+\cos t\right)
e^{ik|z|\cos(t+\theta_{\hat{x}}+\theta_{\hat{z}})}}{k\sin^2 t}
:=\frac{h(t)}{k\sin^2 t}.
\enn
By a straightforward calculation, it is derived that
\ben
h(0)=h'(0)=0\;\;\; |h''(t)|\leq C(1+|z|)^3,\;\;\;t\in\R.
\enn
Then, by the Taylor expansion we obtain that for $t\in(0,\pi-\theta_{\hat{x}})$,
\be\label{eq61}
|Q'_{1,1}(t)|\leq C\frac{(1+|z|)^3 t^2}{\sin^2 t}.
\en
Combining (\ref{eq57}), (\ref{eq60}) and (\ref{eq61}) gives
\be\label{eq64}
|\delta_{0,1}(|x|)|&\le&\left[|Q_{1,1}(0)|+|Q_{1,1}(\pi-\theta_{\hat{x}})|
+\mathscr{V}_{0,\pi-\theta_{\hat{x}}}\{Q_{1,1}(t)\}\right]|x|^{-1}\no\\
&\le& C\left(|z|+\frac{1}{|\sin{\theta_{\hat{x}}}|}
+\int^{\pi-\theta_{\hat{x}}}_0\frac{(1+|z|)^3 t^2}{\sin^2 t}dt\right)\frac{1}{|x|}.
\en
Further, it follows from (\ref{eq62}) that
\be\label{eq65}
|\vep_{0,1}|\leq\frac{2}{|x|}\frac{|a_0|}{(p(b)-p(a))^{1/2}}=
\frac{1}{k|x|}\frac{1}{\left|\sin\frac{\pi-\theta_{\hat{x}}}{2}\right|}.
\en
Let $I_{1,Res}(x,z):=\delta_{0,1}(|x|)-\vep_{0,1}(|x|)$. Then combining (\ref{eq63}), (\ref{eq64})
and (\ref{eq65}) yields
\be\label{eq67}
I_1(x,z)=\frac{e^{-ik|x|}}{|x|^{1/2}}e^{\pi i/4}\left(\frac{\pi}{2k}\right)^{1/2}e^{ik\hat{x}\cdot z'}
+I_{1,Res}(x,z)
\en
with
\be\label{eq68}
|I_{1,Res}(x,z)|\le C\left(|z|+\frac{1}{|\sin\frac{\pi-\theta_{\hat{x}}}{2}|}
+\frac{1}{|\sin\theta_{\hat{x}}|}
+\int^{\pi-\theta_{\hat{x}}}_0\frac{(1+|z|)^3t^2}{\sin^2 t}dt\right)\frac{1}{|x|}.\qquad\;
\en

Similarly, for $I_2(x,z)$ we have
\be\label{eq69}
I_2(x,z)=\frac{e^{-ik|x|}}{|x|^{1/2}}e^{\pi i/4}
\left(\frac{\pi}{2k}\right)^{1/2}e^{ik\hat{x}\cdot z'}
+I_{2,Res}(x,z),
\en
where
\be\label{eq70}
|I_{2,Res}(x,z)|\leq C\left(|z|+\frac{1}{|\sin\frac{\theta_{\hat{x}}}{2}|}+\frac{1}{|\sin\theta_{\hat{x}}|}
+\int^{\theta_{\hat{x}}}_0\frac{(1+|z|)^3t^2}{\sin^2 t}dt\right)\frac{1}{|x|}.
\en

Now, let $U_{2,Res}(x,z):=-(\ov{I_{1,Res}(x,z)}+\ov{I_{2,Res}(x,z)})$.
Then the equation (\ref{eq47}) with the estimate (\ref{eq48}) follows from (\ref{eq66}), (\ref{eq67}),
(\ref{eq68}), (\ref{eq69}) and (\ref{eq70}). The proof is thus complete.
\end{proof}

\begin{lemma}\label{lem9}
Let $z\in\R^2$ and let  $U_{j,Res}(x,z),j=2,3,$ be the functions defined in Lemma \ref{lem6}.
Assume that $\delta>0$ is small enough and $R>0$ is large enough. Then, for any
$x=R(\cos\theta_{\hat{x}},\sin\theta_{\hat{x}})$ with $\theta_{\hat{x}}\in[\delta,\pi-\delta]$ we have
\be\label{eq71}
|U_{j,Res}(x,z)|\leq C\frac{(1+|z|)^3}{R\delta},\quad j=2,3,
\en
where $C>0$ is a constant independent of $x$ and $z$.
\end{lemma}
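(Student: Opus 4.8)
The plan is to derive (\ref{eq71}) directly from the explicit bound (\ref{eq48}) of Lemma \ref{lem6} by estimating each of the six terms inside its bracket uniformly over $\theta_{\hat{x}}\in[\delta,\pi-\delta]$. Since $|x|=R$ in the statement, it suffices to show that the bracketed quantity in (\ref{eq48}) is at most $C(1+|z|)^3/\delta$. Note also that the bound (\ref{eq48}) has exactly the same form for $j=2$ and $j=3$, so the two cases are handled simultaneously.

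First I would dispose of the elementary terms. The term $|z|$ is trivially bounded by $(1+|z|)^3$. For the three reciprocal-sine terms, observe that $\theta_{\hat{x}}/2$ and $(\pi-\theta_{\hat{x}})/2$ both lie in $[\delta/2,\pi/2]$ while $\theta_{\hat{x}}$ lies in $[\delta,\pi-\delta]$. Since $\sin$ is increasing on $[0,\pi/2]$ and $\sin t\ge t/2$ there, we get $|\sin(\theta_{\hat{x}}/2)|\ge\sin(\delta/2)\ge\delta/4$, $|\sin((\pi-\theta_{\hat{x}})/2)|\ge\delta/4$, and, using $\sin(\pi-\theta_{\hat{x}})=\sin\theta_{\hat{x}}$, also $|\sin\theta_{\hat{x}}|\ge\sin\delta\ge\delta/2$. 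Hence each of these three terms is $\le C/\delta$.

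The only nonroutine point is the estimate of the two integrals $\int_0^{\theta_{\hat{x}}}(1+|z|)^3 t^2/\sin^2 t\,dt$ and $\int_0^{\pi-\theta_{\hat{x}}}(1+|z|)^3 t^2/\sin^2 t\,dt$. Pulling out the factor $(1+|z|)^3$, it is enough to prove that $\int_0^{s}t^2/\sin^2 t\,dt\le C/\delta$ whenever $0<s\le\pi-\delta$; this then covers both integrals, since $\pi-\theta_{\hat{x}}\in[\delta,\pi-\delta]$ as well. I would split the integral at $\pi/2$. On $[0,\pi/2]$ the function $t/\sin t$ is increasing (its derivative has numerator $\sin t-t\cos t\ge0$ there) with maximum $\pi/2$, so $t^2/\sin^2 t\le\pi^2/4$ and that part contributes at most a constant. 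On $[\pi/2,s]$ one bounds $t^2\le\pi^2$ and computes $\int_{\pi/2}^{\pi-\delta}\sin^{-2}t\,dt=[-\cot t]_{\pi/2}^{\pi-\delta}=\cot\delta\le1/\delta$, the last inequality because $\tan\delta\ge\delta$ for $\delta\in(0,\pi/2)$. Thus $\int_0^{s}t^2/\sin^2 t\,dt\le C(1+\cot\delta)\le C/\delta$, so the two integral terms in (\ref{eq48}) are each $\le C(1+|z|)^3/\delta$.

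Collecting everything, the bracket in (\ref{eq48}) is bounded by $C(1+|z|)^3/\delta$, and dividing by $|x|=R$ gives (\ref{eq71}) for $j=2,3$. The main obstacle is the integral estimate, and within it the only real subtlety is the behaviour near $t=\pi$: the singularity of $t^2/\sin^2 t$ there is precisely absorbed by the cutoff $s\le\pi-\delta$ through the bound $\cot\delta\le1/\delta$. Everywhere else the hypothesis that $\delta$ be small (in particular $\delta<\pi/2$) is used only to keep the range $[\delta,\pi-\delta]$ nondegenerate and to justify the routine elementary inequalities for $\sin$ and $\tan$.
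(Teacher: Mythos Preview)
Your proof is correct and follows essentially the same approach as the paper: both bound each term of (\ref{eq48}) separately, handling the reciprocal-sine terms via elementary inequalities and the two integrals by splitting at $\pi/2$ and using that $\int_{\pi/2}^{\pi-\delta}\csc^2 t\,dt=\cot\delta\le C/\delta$. Your write-up is in fact a bit more explicit than the paper's (e.g.\ the monotonicity of $t/\sin t$ and the exact computation of the antiderivative), but the argument is the same.
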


\begin{proof}
For $\theta_{\hat{x}}\in[\delta,\pi-\delta]$ with $\delta$ small enough we have
\ben
\frac{1}{\left|\sin\frac{\theta_{\hat{x}}}{2}\right|}+\frac{1}{\left|\sin\frac{\pi-\theta_{\hat{x}}}{2}\right|}
+\frac{1}{\left|\sin{\theta_{\hat{x}}}\right|}\le\frac{C}{\delta}
\enn
and
\ben
&&\int^{\theta_{\hat{x}}}_0\frac{(1+|z|)^3 t^2}{\sin^2 t}dt
+\int^{\pi-\theta_{\hat{x}}}_0\frac{(1+|z|)^3 t^2}{\sin^2 t}dt\\
&&\qquad\le 2\int^{\pi-\delta}_0\frac{(1+|z|)^3 t^2}{\sin^2 t}dt
\le C(1+|z|)^3\left(1+\int^{\pi-\delta}_{\pi/2}\frac{1}{\sin^2 t}dt\right)\\
&&\qquad= C(1+|z|)^3\left(1+\int^{\pi/2}_\delta\frac{1}{\sin^2 t}dt\right)
\le C\frac{(1+|z|)^3}{\delta}.
\enn
This, together with Lemma \ref{lem6}, implies the inequality (\ref{eq71}).
The proof is thus complete.
\end{proof}

We are now ready to prove Lemma \ref{lem5}.

{\em Proof of Lemma $\ref{lem5}$.}
For arbitrarily fixed $z\in\R^2$, let $\delta=R^{-1/4}$ with $R>0$ large enough. Define
$\pa B^+_{R,\delta}:=\{x=R(\cos\theta_{\hat{x}},\sin\theta_{\hat{x}})\;|\;
\theta_{\hat{x}}\in(0,\delta)\cup(\pi-\delta,\pi)\}$ and define
\be\label{eq72}
U_0(x,z)&:=&\frac{e^{ik|x|}}{|x|^{1/2}}\left[\int_{\Sp^1_-}u^\infty(\hat{x},d)e^{-ikz\cdot d}ds(d)\right.\no\\
&&\hspace{2cm}\left.-\left(\frac{2\pi}{k}\right)^{1/2}e^{-\pi i/4}
  (e^{-ik\hat{x}\cdot z'}+e^{-ik\hat{x}\cdot z})\right],\\
U_{Res}(x,z)&:=&\sum^{3}\limits_{j=1}U_{j,Res}(x,z), \label{eq72+}
\en
where $U_{j,Res}(x,z),j=1,2,3,$ are given in (\ref{eq93}), (\ref{eq47}) and (\ref{eq49}), respectively.
Then it follows from Lemmas \ref{lem7} and \ref{lem6} that $U(x,z)=U_0(x,z)+U_{Res}(x,z).$
Now, by the definition of $U_0(x,z)$ and $U(x,z)$, and using Lemmas \ref{le3} and \ref{le4} we get
\be\label{eq77}
|U_0(x,z)|\leq C\frac{1}{R^{1/2}},\quad |U(x,z)|\leq C\frac{1+|z|}{R^{1/2}}\quad\forall x\in\pa B^+_R,
\en
which yields
\be\label{eq78}
|U_{Res}(x,z)|\leq C\frac{1+|z|}{R^{1/2}}\quad\forall x\in\pa B^+_R.
\en
On the other hand, by Lemmas \ref{lem7} and \ref{lem9} and on noting that $\delta=R^{-1/4}$
it follows that
\be\label{eq79}
|U_{Res}(x,z)|\leq C_1\left(\frac{1}{R^{3/2}}+\frac{(1+|z|)^3}{\delta R}\right)
\leq C\frac{(1+|z|)^3}{R^{3/4}}\quad\forall x\in\pa B^+_R\ba\pa B^+_{R,\delta}.\;\quad
\en

We now prove (\ref{eq76}) for the function $F_{0,Res}(x,z)$ defined in Lemma \ref{lem5}.
Since $F(R,z)=F_0(z)+F_{0,Res}(R,z)$, and by the definition of $F(R,z)$, $F_0(z)$ and $U_0(x,z)$
(see (\ref{eq15}), (\ref{eq27}) and (\ref{eq72})), we have
\ben
&&\int_{\pa B^+_R}|U(x,z)|^2dx\\
&&=\int_{\Sp^1_+}\left|\int_{\Sp^1_-}u^\infty(x,d)e^{-ik z\cdot d}ds(d)-\left(\frac{2\pi}{k}\right)^{1/2}
e^{-\pi i/4}\left(e^{-ik\hat{x}\cdot z'}+e^{-ik\hat{x}\cdot z}\right)\right|^2ds(\hat{x})\\
&&\qquad+F_{0,Res}(R,z)=\int_{\pa B^+_R}|U_0(x,z)|^2dx+ F_{0,Res}(R,z).
\enn
Thus we have
\be\label{eq73}
F_{0,Res}(x,z)=\int_{\pa B^+_R}U_0(x,z)\ov{U_{Res}(x,z)}dx+\int_{\pa B^+_R}U_{Res}(x,z)\ov{U(x,z)}dx.
\en
From (\ref{eq77}), (\ref{eq78}) and (\ref{eq79}) it follows that
\be\label{eq74}
&&\left|\int_{\pa B^+_R}U_0(x,z)\ol{U_{Res}(x,z)}dx\right|\no\\
&&\qquad\le\left|\int_{\pa B^+_{R,\delta}}U_0(x,z)\ov{U_{Res}(x,z)}dx\right|
+\left|\int_{\pa B^+_R\ba\pa B^+_{R,\delta}}U_0(x,z)\ov{U_{Res}(x,z)}dx\right|\no\\
&&\qquad\le CR\delta\frac{1}{R^{1/2}}\frac{1+|z|}{R^{1/2}}
+C R\frac{1}{R^{1/2}}\frac{(1+|z|)^3}{\delta R}\no\\
&&\qquad= C\left((1+|z|)\delta+\frac{(1+|z|)^3}{\delta R^{1/2}}\right)
\en
and
\be\label{eq75}
&&\left|\int_{\pa B^+_R}U_{Res}(x,z)\ol{U(x,z)}dx\right|\no\\
&&\qquad\le\left|\int_{\pa B^+_{R,\delta}}U_{Res}(x,y)\ov{U(x,z)}dx\right|
+\left|\int_{\pa B^+_R\ba\pa B^+_{R,\delta}}U_{Res}(x,z)\ov{U(x,z)}dx\right|\no\\
&&\qquad\le CR\delta\frac{1+|z|}{R^{1/2}}\frac{1+|z|}{R^{1/2}}
+CR\frac{(1+|z|)^3}{\delta R}\frac{1+|z|}{R^{1/2}}\no\\
&&\qquad= C\left((1+|z|)^2\delta+\frac{(1+|z|)^4}{\delta R^{1/2}}\right).
\en
Combining (\ref{eq73}), (\ref{eq74}) and (\ref{eq75}) gives
\ben
|F_{0,Res}(R,z)|\leq C\left((1+|z|)^2\delta+\frac{(1+|z|)^4}{\delta R^{1/2}}\right).
\enn
This, combined with the fact that $\delta=R^{-1/4}$, yields (\ref{eq76}).
Lemma \ref{lem5} is thus proved.
$\Box$

\end{document}